\newcommand\adda[1]{{\color{blue}{#1}}}
\theoremstyle{remark}
\newcommand{\bn}{\mbox{\boldmath{$n$}}}
\newcommand{\bv}{\mbox{\boldmath{$v$}}}
\newcommand{\bu}{\mbox{\boldmath{$u$}}}
\newcommand{\bh}{\mbox{\boldmath{$h$}}}
\newcommand{\by}{\mbox{\boldmath{$y$}}}
\newcommand{\bB}{\mbox{\boldmath{$B$}}}
\newcommand{\bM}{\mbox{\boldmath{$M$}}}
\newcommand{\bG}{\mbox{\boldmath{$G$}}}
\newcommand{\bU}{\mbox{\boldmath{$U$}}}
\newcommand{\bW}{\mbox{\boldmath{$W$}}}
\newcommand{\bH}{\mbox{\boldmath{$H$}}}
\newcommand{\bw}{\mbox{\boldmath{$\mathrm{w}$}}}
\def\d{\mathrm{d}}
\DeclareMathOperator{\diver}{div}
\DeclareMathOperator{\curl}{curl}
\newcommand\dela[1]{}
\theoremstyle{plain}
\newtheorem{theorem}{\textbf{Theorem}}[section]
\newtheorem{lemma}[theorem]{\textbf{Lemma}}
\newtheorem{proposition}[theorem]{\textbf{Proposition}}
\newtheorem{cor}[theorem]{\textbf{Corollary}}
\theoremstyle{definition}
\newtheorem{remark}[theorem]{\textbf{Remark}}
\newtheorem{definition}[theorem]{\textbf{Definition}}
\newtheorem{assumption}[theorem]{\textbf{Assumption}}
\numberwithin{equation}{section}
\numberwithin{figure}{section}
\begin{document}
\title[Weak solutions to the 3D stochastic ferrofluids]{Existence of weak solution of 3D ferrohydrodynamic equations with transport noise: Bloch-Torrey regularisation}
\author{A. Ndongmo Ngana$^{1}$ and P. A. Razafimandimby$^{2}$}
\dedicatory{\vspace{-10pt}\normalsize{$^{1}$ University of York, Heslington, York YO10 5DD, UK \\
$^{2}$ School Of Mathematical Sciences, Dublin City University, Collins Avenue, Dublin D09W6Y4, Ireland}}
\thanks{
\textbf{A. Ndongmo Ngana is fully supported by the EPSRC-UKRI Research Agency through the Marie Sk\l{}odowska-Curie Action.}
}

\keywords{Ferrofluids, Navier-Stokes equations, Maxwell equations, MHD, Internal rotations, Relaxation time}
	
\begin{abstract}
In this article, we consider a stochastic ferrohydrodynamic system  which describes the Bloch-Torrey regularization of the motion of an electrically conducting ferrofluids driven by transport noise filling a 3D bounded domain with a smooth boundary. We prove the global existence of a probabilistic weak solution of the stochastic system by making use of the combination of Galerkin method and compactness method. The system under study is basically a coupling of the Navier-Stokes equations with internal rotation, the Maxwell equations and the ferromagnetization equations. Thus, our result can be seen as a modest generalization of the existing results on the global existence of  weak solutions of stochastic Magnetohydrodynamic (MHD),  Navier-Stokes and Bloch type equations on 3D bounded domain.  
\end{abstract}
	
\maketitle
	
\section{Introduction}
Magnetic fluids (also called ferrofluids) are suspensions of ferromagnetic nanoparticles in a liquid carrier (water, oil, etc.). The suspended particles conform to Brownian motion, which makes the ferrofluids dynamics stochastic. Ferrofluids can be controlled by an external field, especially the magnetic field, which gives rise to a wealth of applications in industry, engineering, technology and medical sciences: cooling and resonance damping for loudspeaker coils, rotating shaft seals in vacuum chambers used in the semiconductor industry, drugs or radioisotopes targeted by magnetic guidance, magnetic resonance imaging contrast enhancement, etc.. See \cite{Pankhurst+Connolly+Jones+Dobson_2003,Zahn_2001} and references therein for the many applications of ferrofluids. There are two widely accepted  models that describe the dynamics of ferrofluids: the Shliomis model \cite{Shliomis_2002} and the Rosensweig model \cite{Rosensweig,Rosensweig_2002}. These models are both nonlinear coupling system of partial differential equations
describing the dynamic of a magnetic fluid with internal rotations. 	

Since the movement of ferrofluids particles follows a thermal Brownian motion, the presence of intrinsic noise and irregular movement in their microscopic individual dynamics seem to be ubiquitous. Despite  stochasticity of the magnetic fluids dynamics, previous mathematical analyses only considered deterministic models, see, for instance, \cite{Kamel+Hamdache+Murat_2008,Kamel,Nochetto_2019,Xie_2019} and references therein.  In order to establish realistic models,  the macroscopic equations of the collective dynamics, which are obtained by  the limit in the microscopic individual dynamics, should contain random perturbations due to noisy external forces and the  randomness of the ambient environment. This motivated us to consider ferrofluids with stochastic perturbation in the present paper and to initiate research on such problems. We are mainly  interested in the electrically conductive Rosensweig equations (ECREs for short) driven by transport noise type and filling a bounded domain $\mathcal{O}\subset \mathbb{R}^3$ with a boundary $\Gamma:=\partial \mathcal{O}$ of class $C^\infty$.  More precisely, given a time horizon $T>0$, 
we consider the following stochastic system 
\begin{subequations}\label{Eq4.1}
	\begin{align}
		& \partial_t \bu + (\bu \cdot \nabla) \bu + \nabla p - \nu \Delta \bu - \mu_0 (\bM \cdot \nabla) \bH \notag
		\\ 
		&\quad = \mu_0 \curl \bH \times \bH - \alpha \curl(\curl \bu - 2 \bw) + \sum_{k=1}^{\infty} (b_k \cdot \nabla) \bu \, \partial_t \beta_{k}^1 \qquad \text{in} ~ (0,T) \times \mathcal{O}, \label{Eq4.1a} 
		\\
		& \partial_t \bw + (\bu \cdot \nabla ) \bw - (\lambda_1 + \lambda_2) \nabla \diver \bw - \lambda_1 \Delta \bw \notag
		\\
		& \quad = 2 \alpha (\curl\bu - 2 \bw) + \mu_0 \bM \times \bH + \sum_{k=1}^{\infty} (f_k \cdot \nabla) \bw \, \partial_t \beta_k^2 \hspace{2cm} \text{in} ~ (0,T) \times \mathcal{O}, \label{Eq4.2a}
		\\
		& \partial_t \bM + (\bu \cdot \nabla ) \bM + \lambda \curl(\curl\bM)-\lambda \nabla \diver \bM \notag \\
		&\quad = \bw \times \bM - \frac{1}{\tau} (\bM - \chi_0 \bH) + \sum_{k=1}^{\infty} (\sigma_k \cdot \nabla) \bM \, \partial_t \beta_k^3 \hspace{2.7cm} \text{in} ~ (0,T) \times \mathcal{O}, \label{Eq4.3a}
		\\
		& \partial_t \bB + \frac{1}{\sigma} \curl(\curl\bH) = \curl(\bu \times \bB) + \sum_{k=1}^{N} (j_k \cdot \nabla) \bH \, \partial_t \beta_{k}^4 \hspace{1.7cm} \text{in} ~ (0,T) \times \mathcal{O}, \label{Eq4.4a}
		\\
		& \bB = \mu_0 (\bM +\bH),\quad \diver \bB= 0,  \hspace{5.8cm} \text{in} ~ (0,T) \times \mathcal{O}, \label{Eq4.5a} 
		\\
		& \diver \bu=0 \hspace{9.2cm} \text{in} ~ (0,T) \times \mathcal{O}. \label{Eq4.6a} 
	\end{align}
\end{subequations}
Roughly speaking, this system, which we call Bloch-Torrey electrically conductive Rosensweig equations (BT-ECREs for short),  is a simplified version of the  Bloch-Torrey regularization of a ferrofluid system proposed in \cite{Kamel}. In \eqref{Eq4.1a}-\eqref{Eq4.6a}, the unknown functions are the velocity $\bu$ of the fluid, its pressure $p$, the fluid internal rotation $\bw$, the magnetization $\bM$ and the magnetic field $\bH$. Here, the viscosity coefficients $\nu, \, \lambda_1$, and $\lambda_2$, the relaxation time $\tau$, the magnetic susceptibility coefficient $\chi_0$, the (small) diffusion coefficient $\lambda$, and the electric conductivity $\sigma$ are all positive and assumed to be constant. The constant   $\mu_0$ is a fundamental constant called the magnetic permeability of the vacuum.  The forcing term $\mu_0(\bM\cdot\nabla)\bH$ in the linear momentum equation is the so-called Kelvin force. The term $\mu_0\bM$ represents the vector moment per unit volume. The equation $\diver \bB=0$ (cf. \eqref{Eq4.5a}) is the Maxwell equation for the magnetic induction $\bB=\mu_0(\bM+\bH)$ in $\mathcal{O}$ and $\bB=\mu_0\bH$ outside $\mathcal{O}$ where the magnetization $\bM$ vanishes. The processes $\beta_k^i, \,i = 1,\ldots,4$ and $k\in \mathbb{N}$ are iid standard Brownian motions.

The noise coefficients $b_k,\, f_k,\, \sigma_k:\mathcal{O} \to \mathbb{R}^3$, $ k \in \mathbb{N}$, $j_k: \mathcal{O} \to \mathbb{R}^3$, $k \in \{1,\ldots, N\}$ are sufficiently regular functions. In particular, we assume that for each $ k \in \mathbb{N}$
\begin{equation*}
	\diver \sigma_k = 0.
\end{equation*}
We endow the problem \eqref{Eq4.1a}-\eqref{Eq4.6a} with the following boundary conditions
\begin{equation}\label{eq4.2} 
	\begin{aligned}
		\bu &=0, \quad \bw= 0, \quad \diver \bM=0 \quad &\mbox{on} \quad (0,T)\times \Gamma, \\
		\bM \times \bn&=0, \quad  \bH \times \bn=0, \quad &\mbox{on} \quad (0,T)\times \Gamma,
	\end{aligned}
\end{equation}
where $\Gamma$ is the boundary of $\mathcal{O}$ and $\bn$ is its outward normal.

\noindent 
The initial condition is given by
\begin{equation}\label{eqt4.3} 
	\begin{aligned}
		(\bu, \bw, \bM, \bH)(t=0) &= (\bu_0,\bw_0,\bM_0,\bH_0) \quad &\text{in} \quad \mathcal{O}, \\
		\diver \bu_0&= \diver(\bM_0+\bH_0)=0, \quad &\text{in} \quad \mathcal{O},
	\end{aligned}
\end{equation}
where $\bu_0,\,\bw_0,\,\bM_0$ and $\bH_0$ are given non-random  initial data.

\noindent


While the deterministic Rosensweig and Shliomis models and their variants have been extensively studied (see \cite{Kamel+Hamdache+Murat_2008,Kamel,Aristide+Paul,Nochetto-2016,Nochetto_2019,Scrobogna-2019,Scrobogne-2021,Xie_2019}), we are not aware of any article that studied a stochastic system of ferrofluids. There is various excellent work on stochastic Navier-Stokes equations subsequent to the research conducted by Bensoussan and Temam in \cite{Bensoussan_1995}. We refer readers, for example, to the papers \cite{Flandoli+Gatarek_1995,Motyl1,Brzezniak+Capinski+Flandoli_1992,Brzezniak+Hausenblas+Zhu_2013} and \cite{Brzezniak+Peszat_1999} just to name a few. We also point out that there is excellent stochastic work on the coupled model of the Maxwell and Navier-Stokes equations (see, e.g., \cite{Chueshov+Millet_2010}, \cite{Barbu+DaPrato}, \cite{Sango}, \cite{Sango+Tesfalem}, \cite{Sundar}  and \cite{Yamazaki}), commonly known as the magnetohydrodynamic (MHD) equations, which describe the motion of electrically conductive fluids and have many applications in astrophysics, geophysics and plasma physics. However, our stochastic models do no fall in these frameworks. In fact, in contrast to ferrofluids, the MHD equations are  models for non-magnetizable but electrically conductive fluids. Furthermore, the dominant body force in MHD is the Lorentz force $\mu_0 \curl \bH \times \bH$, whereas for ferrofluids, the Kelvin force $\mu_0(\bM\cdot\nabla)\bH$ is the most important one, leading to different kinds of nonlinearities and difficulties. 
  
In this paper, we prove the existence of a probabilistic weak solution to the system \eqref{Eq4.1} by means of a combination of the Galerkin and compactness methods. Not only do we partially extend the known results in the deterministic case, but we also modestly generalize the solution existence results of 3D MHD and Navier-Stokes equations and Bloch-type equations with random perturbations. In fact, notice that in the absence of fluid carrier and magnetic field, Eq. \eqref{Eq4.3a} reduces to a deterministic Bloch-Torrey type  equation, which is a modification of the Bloch-type equation with diffusion  proposed by Torrey in \cite{Torrey2}, see also \cite{Gaspin}. The system reduces to the Magnetohydrodynamic (MHD) equations when $\bM\equiv 0$ and $\bw\equiv 0$. 
  
In a subsequent paper, we will study the case where $\lambda=0$, which appears to be a challenging problem due to the fact that \eqref{Eq4.1} will be reduced to a system of ferrofluid models consisting of parabolic and hyperbolic equations, and that the Kelvin force  $\mu_0(\bM\cdot\nabla)\bH$  is a priori unbounded. We can, however, conjecture that for the passage to the  limit $\lambda \to 0$ to be successful, we need that the electric conductivity $\sigma$ is not too small, which agrees with the deterministic case, see \cite{Aristide+Paul} and also Remark \ref{Rem:Limit-gamma}. This conjecture means that natural ferrofluids, which are naturally dielectric, cannot be obtained from the electric conductivity limit of artificially developed electric conducting models. 

\noindent
The present paper is split into three main parts. The first one, consisting of Section \ref{sect2} concerns the abstract framework. We also state and prove some auxilliary results which are essential to our analysis. In Section \ref{sect3}, we introduce our stochastic framework, formulate the problem, outline the general assumptions, and state our main result. The last part, consisting of Section \ref{sect5} contains the proof of the main theorem on the existence of a martingale solutions. 

\section{Deterministic Setting of BT-ECREs}\label{sect2}

\subsection{Setting of functional spaces}	
Throughout the paper, we will use the notation $Q_t:=(0,t) \times \mathcal{O}$ for every $t \in(0,T]$, and set $Q:= Q_T$ and $\Sigma:= (0,T) \times \Gamma$, while $a_2 \wedge a_3 = \min(a_2,a_3)$ for any real numbers $a_2$ and $a_3$. Next, 
for any (real) Banach space $X$, its (topological) dual is denoted by $X'$ and the duality pairing between $X'$ and $X$ by $\langle \cdot, \cdot \rangle_{X',X}$. If $E_1$ and $X_1$ are separable Hilbert spaces, then by $L_2(E_1,X_1)$ we will denote the Hilbert space of all Hilbert-Schmidt operators from $E_1$ to $X_1$ endowed with the canonical norm $\|\cdot\|_{L_2(E_1,X_1)}$. For any $1 \leq p \leq \infty$ and $s \in \mathbb{R}$, we denote by $L^p(\mathcal{O})$ and $W^{s,p}(\mathcal{O})$ the usual Lebesgue and Sobolev spaces of scalar functions, respectively. If $p=2$, we simply write $W^{s,2}(\mathcal{O}) = H^s(\mathcal{O}) $. We denote by $H_0^1(\mathcal{O})$ the closure of $\mathcal{C}_0^\infty(\mathcal{O})$ in $H^1(\mathcal{O})$. To shorten the notation, we use the notations $\mathbb{L}^p$, $\mathbb{W}^{s,p}$, $\mathbb{H}^s$, and $\mathbb{H}_0^1$, to denote the spaces $[L^p(\mathcal{O})]^3$, $[W^{s,p}(\mathcal{O})]^3$, $[H^s(\mathcal{O})]^3$, and $[H_0^1(\mathcal{O})]^3$, respectively. \newline 
Following the notations used in \cite{Temam1} for the Navier-Stokes equations and in \cite{Gerbeau} for the Magnetohydrodynamics of liquid metals, we introduce the following spaces
\begin{equation*}
	\begin{aligned}
		\mathcal{V} &= \{\bv \in [\mathcal{C}_0^\infty(\mathcal{O})]^3 ~ \text{such that} ~ \diver \bv= 0\}, 
		\\
		H &= \text{the closure of} ~ \mathcal{V} ~ \text{in} ~ \mathbb{L}^2, 
		\\
		V &= \text{the closure of} ~ \mathcal{V} ~ \text{in} ~ \mathbb{H}_0^1, 
		\\ 
		H(\curl,\mathcal{O}) &= \{\bv \in \mathbb{L}^2;~\curl\bv \in \mathbb{L}^2\},
		\\
		H(\curl^0,\mathcal{O}) &= \{\bv \in H(\curl,\mathcal{O});~\curl \bv = 0\},
		\\
		H(\diver,\mathcal{O}) &= \{\bv \in \mathbb{L}^2;~\diver \bv \in L^2(\mathcal{O})\},
		\\
		H(\diver^0,\mathcal{O}) &= \{\bv \in H(\diver,\mathcal{O});~\diver \bv = 0\},
		\\
		H_0(\diver,\mathcal{O}) &= \{\bv \in H(\diver,\mathcal{O});~ \bv \cdot \bn = 0 ~ \text{on} ~ \partial \mathcal{O}\},
		\\
		H_{\bn} &=\{\bv \in \mathbb{L}^2; ~ \bv \times \bn=0  \text{ on } \partial \mathcal{O}\}.
	\end{aligned}
\end{equation*}
We also introduce the spaces:
\begin{equation*}
	\begin{aligned}
		V_1 &= H_{\bn}\cap \mathbb{H}^1,
		\\
		V_2 & = V_1 \cap H(\diver^0,\mathcal{O}),
		\\ 
		E(\mathcal{O}) &= \{\bv: \bv= \nabla h,\; h \in L^2_{\operatorname{loc}}(\mathcal{O}), \hspace{0,1cm} \nabla h \in \mathbb{L}^2\}, 
		\\
		E_1(\mathcal{O}) &=\{\bv : \bv = \nabla h \in E(\mathcal{O}),\hspace{0,1cm} \Delta h \in L^2(\mathcal{O}) \}.
	\end{aligned}
\end{equation*}
The space $H$ can also be characterized in the following way (see \cite[Theorem I.1.4]{Temam})
\begin{equation*}
	H=\{\bu \in H(\diver,\mathcal{O}) :~ \diver\bu = 0 ~ \text{in} ~ \mathcal{O},~ \bu \cdot \bn\lvert_{\Gamma} = 0 ~ \text{on} ~ \Gamma\},
\end{equation*}
and also by (cf. \cite[Theorem 1.4]{Simader})
\begin{equation*}
	H = \{\bu \in \mathbb{L}^2: \langle \nabla h,\bu \rangle = 0 ~ \forall \hspace{0.1cm} \nabla h \in (E_2(\mathcal{O}))'\},
\end{equation*}
where $\langle \cdot, \cdot \rangle$ denotes the dual pairing between $E_2(\mathcal{O})$ and its dual $(E_2(\mathcal{O}))'$. \newline
The space $V$ has the following characterization (see \cite[Theorem 1.6]{Temam1}),
\begin{equation*}
	V = \{\bu \in \mathbb{H}^1:~ \diver\bu = 0 ~ \text{in} ~ \mathcal{O},~ \bu \lvert_{\Gamma} = 0 ~ \text{on} ~ \Gamma\}.
\end{equation*}
We denote by $(\cdot, \cdot)$ and $|\cdot|$ the inner product and the norm induced by the inner product and the norm in $\mathbb{L}^2$ on $H$, respectively. We endow $H$ with the scalar product and norm of $\mathbb{L}^2$. As usual, we equip the space $V$ with the gradient-scalar product $ (\nabla \bu, \nabla \bv), \, \bu, \bv\in V,$ and the gradient-norm $|\nabla \cdot|$, which is equivavent to the $\mathbb{H}_0^1$-norm. The spaces $H(\diver,\mathcal{O})$ and $H(\curl,\mathcal{O})$ are Hilbert spaces when equipped with  the scalar products defined by
\begin{align*}
	(\bu,\bv)_{H(\diver,\mathcal{O})}= (\bu,\bv) + (\diver\bu, \diver\bv) \quad &\forall \bu, \bv \in H(\diver,\mathcal{O}),
	\\
	(\bu_1,\bv_1)_{H(\curl,\mathcal{O})}= (\bu_1,\bv_1) + (\curl\bu_1, \curl\bv_1) \quad &\forall \bu_1, \bv_1 \in H(\curl,\mathcal{O}),
\end{align*}
respectively. The space $E(\mathcal{O})$ is equipped with the norm
\begin{equation*}
	\|\bv\|_{E(\mathcal{O})}= |\nabla h| \;\; \text{ for } \bv= \nabla h \in E(\mathcal{O}),
\end{equation*}
while the space $E_1(\mathcal{O})$ is equipped with the following norm 
\begin{equation*}
	\|\bv\|_{E_1(\mathcal{O})}:= (|\nabla h|^2 + |\Delta h|^2)^\frac{1}{2} \quad \text{for} \quad \bv=\nabla h\in E_1(\mathcal{O}).
\end{equation*}
We recall that, see \cite[Corollary 3.1]{Amrouche_2013}, there exists a positive constant $C_0$ such that 
\begin{equation}\label{eq2.1}
	\lvert \nabla \bM \lvert^2 \leq C_0(\lvert \bM \rvert^2 + \lvert \curl\bM \rvert^2 + \lvert \diver\bM \rvert^2), \;\; \bM \in V_1,
\end{equation}
which provides the spaces $V_1$ and $V_2$ with the equivalent norm associated to the inner product defined by
\begin{equation}\label{Eq2.2}
	[\cdot;\cdot]=(\cdot,\cdot) + (\curl\cdot,\curl\cdot) + (\diver\cdot, \diver\cdot).
\end{equation}
Now, we define the Hilbert space $\mathbb{H}$ by 
\begin{equation*}
	\mathbb{H}:= H \times \mathbb{L}^2 \times H_{\bn} \times  H_{\bn},
\end{equation*}
endowed with the scalar product whose associated norm is given by
\begin{equation*}
	\|(\bu,\bw,\bM,\bH)\|_{\mathbb{H}} = (|\bu|^2 + |\bw|^2 + |\bM|^2 + |\bH|^2)^{1/2}, \quad \forall (\bu,\bw,\bM,\bH) \in \mathbb{H}.
\end{equation*}
Let $I$ be a subset of $[0,\infty)$. The set of continuous functions $f : I \to X$ is denoted by $\mathcal{C}(I;X)$. The space $\mathcal{C}_w(I;X)$ consists of all functions $f \in L^\infty(I;X)$ such that the map $t \in I \mapsto \langle \varphi,f(t) \rangle_{X',X}$ is continuous, for all $\varphi \in X'$. We denote by $L_{w}^{2}(0,T; X)$ the space $L^2(0,T;X)$ equipped with the weak topology.
\subsection{Setting of bilinear operators and their properties}

Throughout this paper, we denote by $\bu_i$ the $i$-th entry of any vector-valued $\bu$ and we set $\partial_i g= \frac{\partial g}{\partial x_i}$. \newline 
Let $p,q,r \in [1,\infty]$ such that $\frac 1 p + \frac 1 q + \frac 1 r \leq 1$. Now we introduce the trilinear form $b(\cdot,\cdot,\cdot)$ defined by 
\begin{equation}
	b(\boldsymbol{\phi},\boldsymbol{\psi},\bv) 
	= \sum_{i,j = 1}^{3} \int_{\mathcal{O}} \boldsymbol{\phi}_i(x) \partial_i \boldsymbol{\psi}_j(x)  \bv_j(x) \,\d x, \quad \boldsymbol{\phi} \in \mathbb{L}^p, \;\; \boldsymbol{\psi} \in \mathbb{W}^{1,q}, \;\; \text{and} \quad \bv \in \mathbb{L}^r.
\end{equation}
$b(\cdot,\cdot,\cdot)$ is the well-known trilinear form used in the mathematical analysis of the Navier-Stokes equations (see, for instance, \cite{Temam2}). \newline 
Let $X\in \left\{\mathbb{H}_0^1, V\right\}$. By using the density of $C_c^\infty(\mathcal{O})$ and the Gagliardo-Nirenberg inequality, we can prove as in \cite[Lemmata II.1.3 \& II.1.6 ]{Temam} that there exists a positive constant $C$ depending only on $\mathcal{O}$ such that
\begin{equation*}
	|b(\bu,\bv,\bw)|\leq C \|\bu\|_{\mathbb{L}^4} |\nabla\bw| \|\bv\|_{\mathbb{L}^4} \leq  C |\bu|^{\frac14} |\nabla \bu|^{\frac34} |\bv |^{\frac14} |\nabla \bv|^{\frac34} |\nabla \bw|, \quad \forall \bu \in V, \, \bv ,\bw \in X .
\end{equation*}
Using this inequality we can prove the following facts. 
\begin{enumerate}
\item  There exists a continuous bilinear map $B_0 : V \times V \to V'$ such that
	\begin{equation*}
		\langle B_0(\bu,\bv), \bv_1 \rangle_{V',V}= b(\bu, \bv, \bv_1), \quad \forall \bu, \, \bv, \, \bv_1 \in V.
	\end{equation*} 
Furthermore, the bilinear form $B_0(\cdot,\cdot)$ enjoys the following properties:
	\begin{itemize}
\item[] for any $\bu, \, \bv, \, \bv_1 \in V$, we have 
		\begin{equation}\label{eq2.6}
			\langle B_0(\bu,\bv), \bv_1 \rangle_{V',V}
			= - b(\bu, \bv_1,\bv) \quad \text{and} \quad \left \langle B_0(\bu,\bv), \bv \right \rangle_{V',V}= 0.
		\end{equation}
See \cite[Section II.1.2]{Temam2} for more details on these properties.
	\end{itemize}
\item There exists a continuous bilinear map $B_1: V \times \mathbb{H}_0^1 \to \mathbb{H}^{-1}$ such that
	\begin{equation*}	
		\langle B_1(\bu,\bv), \bw \rangle_{\mathbb{H}^{-1},\mathbb{H}_0^1}
		= b(\bu, \bv, \bw), \quad \forall \bu \in V, \,\bw, \,\bv \in \mathbb{H}_0^1.
	\end{equation*}
\item There is also a continuous bilinear map $B_2: V \times V_1 \to V'_1$ such that
	\begin{equation*}
		\langle B_2(u,\bM), \bv \rangle_{V'_1, V_1}
		= b(\bu, \bM, \bv), \quad \forall \bu \in V, \, \bM,\,\bv \in V_1.
	\end{equation*}
Note that there exists a constant $C_1>0$ such that for  $X\in \left\{ V', \mathbb{H}^{-1}, V'_1\right\}$ and $k\in \{0,1,2\}$
	\begin{equation}\label{eq2.6a}
		\lVert B_k(\bu, \bv)\rVert_{X} \leq C_1  |\bu|^{\frac14}  |\nabla \bu|^{\frac34} |\bv|^{\frac14} |\nabla \bv|^{\frac34}, \, \bu \in V, \, \bv \in X.
	\end{equation}
\end{enumerate}
Hereafter, we introduce the following map
\begin{equation*}
	M_1(\bM,\bH,\bv):= \sum_{i,j=1}^3 \int_{\mathcal{O}} \bM_i(x) (\partial_i \bH_j(x)) \bv_j(x) \, \d x, \quad \forall \bM,\,\bH \in V_1,\; \bv \in V,
\end{equation*}
which is well-defined since we are working on a space whose dimension is smaller or equal to three. The properties of the map $M_1: V_1 \times V_1 \times V \to \mathbb{R}$ are given in the following lemma.
\begin{lemma}\label{lem1}
Let $\bM, \bH \in V_1$ and $\bv \in V$. We have
	\begin{equation}\label{eq2.2}
		M_1(\bM,\bH,\bv)
		=-\int_{\mathcal{O}}[(\bM(x) + \bH(x)) \cdot \nabla] \bv(x) \cdot \bH(x) \d x - \int_{\mathcal{O}} \curl\bH(x) \cdot (\bH(x) \times \bv(x)) \d x,
	\end{equation}
and
	\begin{equation}\label{eqt2.2}
		M_1(\bM,\bH,\bv) 
		= \int_{\mathcal{O}}\curl\bH(x) \cdot(\bM(x) \times \bv(x)) \, \d x - \int_{\mathcal{O}}(\bv(x) \cdot \nabla)\bM(x) \cdot \bH(x) \, \d x.
	\end{equation}
Furthermore, there exists a positive constant $C_2>0$  such that
\begin{equation}\label{eq2.3}
\vert M_1(\bM,\bH,\bv)\vert 
\leq C_2 |\bM|^\frac{1}{4} \|\bM\|_{V_1}^\frac{3}{4} \|\bH\|_{V_1} \|\bv\|_{\mathbb{L}^4},
\end{equation}		
	         \begin{equation}\label{eq2.4}
	         	\vert M_1(\bM,\bH,\bv)\vert 
	         	\leq C_2\|\bM + \bH\|_{\mathbb{L}^4} \|\bv\|_V \|\bH\|_{\mathbb{L}^4} + C_2 |\curl\bH| \|\bH\|_{\mathbb{L}^4} \|\bv\|_{\mathbb{L}^4}.
	         \end{equation}
If $\bv \in V \cap \mathbb{H}^2$, then there exists a positive constant $C_3>0$ such that
	\begin{equation}\label{eq2.5}
		\vert M_1(\bM,\bH,\bv)\vert
		\leq C_3 (|\bM + \bH| \|\bH\|_{\mathbb{L}^4} + |\curl\bH| |\bH|)  \|\bv\|_{\mathbb{H}^2}.
	\end{equation}
By the above results, there exists a continuous bilinear map $M_0 : V_1 \times V_1\to V'$ such that
	\begin{equation*}
		\langle M_0(\bM,\bH), \bv \rangle_{V', V}
		= M_1(\bM, \bH,\bv) \dela{b(\bM, \bH,\bv)}, \quad \forall \bM, \,\bH \in V_1,\, \bv \in V.  
	\end{equation*}
\end{lemma}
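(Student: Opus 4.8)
The plan is to treat Lemma~\ref{lem1} in two stages. First I would establish the two integral representations \eqref{eq2.2} and \eqref{eqt2.2}, which are purely a matter of pointwise vector-calculus identities combined with integration by parts; then the three bounds \eqref{eq2.3}--\eqref{eq2.5} and the existence and continuity of $M_0$ follow as routine consequences of H\"older's inequality, the Gagliardo--Nirenberg inequality, the Sobolev embeddings in dimension three, and the norm equivalence on $V_1$ recorded in \eqref{eq2.1}--\eqref{Eq2.2}.

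For \eqref{eqt2.2} I would start from the algebraic identity, valid pointwise for smooth fields,
\[
\curl\bH\cdot(\bM\times\bv)=\big[(\bM\cdot\nabla)\bH\big]\cdot\bv-\big[(\bv\cdot\nabla)\bH\big]\cdot\bM,
\]
obtained by expanding the contracted product of two Levi-Civita symbols. Integrating over $\mathcal{O}$ gives $\int_{\mathcal{O}}\curl\bH\cdot(\bM\times\bv)\,\d x=M_1(\bM,\bH,\bv)-b(\bv,\bH,\bM)$, and since $\bv\in V$, one integration by parts yields $b(\bv,\bH,\bM)+b(\bv,\bM,\bH)=\int_{\mathcal{O}}(\bv\cdot\nabla)(\bH\cdot\bM)\,\d x=0$ (the volume term vanishes because $\diver\bv=0$, the boundary term because $\bv|_{\Gamma}=0$); substituting $b(\bv,\bH,\bM)=-\int_{\mathcal{O}}[(\bv\cdot\nabla)\bM]\cdot\bH\,\d x$ then gives \eqref{eqt2.2}. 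For \eqref{eq2.2} I would write $\bM=(\bM+\bH)-\bH$, so that $M_1(\bM,\bH,\bv)=b(\bM+\bH,\bH,\bv)-b(\bH,\bH,\bv)$; the first term is rewritten by integration by parts as $-b(\bM+\bH,\bv,\bH)$, using $\diver(\bM+\bH)=0$ (the constraint \eqref{Eq4.5a}) and $\bv|_{\Gamma}=0$, while the second is handled via $(\bH\cdot\nabla)\bH=\tfrac12\nabla|\bH|^2-\bH\times\curl\bH$, which after dropping the gradient term (which vanishes since $\bv\in V$) and applying a scalar triple product gives $b(\bH,\bH,\bv)=\int_{\mathcal{O}}\curl\bH\cdot(\bH\times\bv)\,\d x$. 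Collecting the two contributions yields \eqref{eq2.2}.

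With the representations in hand, the estimates are mechanical. For \eqref{eq2.3} I would apply H\"older directly to $M_1(\bM,\bH,\bv)=\sum_{i,j}\int_{\mathcal{O}}\bM_i\,\partial_i\bH_j\,\bv_j\,\d x$ with exponents $(4,2,4)$ to get $|M_1(\bM,\bH,\bv)|\le\|\bM\|_{\mathbb{L}^4}\,|\nabla\bH|\,\|\bv\|_{\mathbb{L}^4}$, then use Gagliardo--Nirenberg, $\|\bM\|_{\mathbb{L}^4}\le C|\bM|^{1/4}\|\bM\|_{\mathbb{H}^1}^{3/4}$, and bound $\|\bM\|_{\mathbb{H}^1}$ and $|\nabla\bH|$ by $\|\bM\|_{V_1}$ and $\|\bH\|_{V_1}$ via \eqref{eq2.1}. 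For \eqref{eq2.4} I would estimate the right-hand side of \eqref{eq2.2}: H\"older $(4,2,4)$ on the first term gives $\|\bM+\bH\|_{\mathbb{L}^4}\|\bv\|_V\|\bH\|_{\mathbb{L}^4}$, and the pointwise bound $|\bH\times\bv|\le|\bH|\,|\bv|$ followed by H\"older on the second gives $|\curl\bH|\,\|\bH\|_{\mathbb{L}^4}\,\|\bv\|_{\mathbb{L}^4}$. For \eqref{eq2.5}, when $\bv\in V\cap\mathbb{H}^2$, I would instead place $\nabla\bv$ in $\mathbb{L}^4$ (via $\mathbb{H}^1\hookrightarrow\mathbb{L}^4$ in three dimensions, so $\|\nabla\bv\|_{\mathbb{L}^4}\le C\|\bv\|_{\mathbb{H}^2}$) and $\bv$ in $\mathbb{L}^\infty$ (via $\mathbb{H}^2\hookrightarrow\mathbb{L}^\infty$), so that $\bM+\bH$ and $\bH$ only need their $\mathbb{L}^2$ and $\mathbb{L}^4$ norms, which produces the stated bound. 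Finally, for fixed $(\bM,\bH)\in V_1\times V_1$, estimate \eqref{eq2.3} together with $\|\bv\|_{\mathbb{L}^4}\le C\|\bv\|_V$ shows that $\bv\mapsto M_1(\bM,\bH,\bv)$ is a bounded linear functional on $V$ of norm at most $C\|\bM\|_{V_1}\|\bH\|_{V_1}$; this defines $M_0(\bM,\bH)\in V'$, bilinearity being immediate from the definition of $M_1$, and the last bound being precisely the continuity of $M_0$.

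The only genuinely delicate point is the bookkeeping in the vector-calculus manipulations leading to \eqref{eq2.2}--\eqref{eqt2.2}: one must check that every boundary term produced by an integration by parts carries the factor $\bv$ and is hence annihilated by $\bv|_{\Gamma}=0$, and keep careful track of where the constraints $\diver\bv=0$ and $\diver(\bM+\bH)=0$ are used. Once the two identities are secured, the estimates and the construction of $M_0$ are entirely standard.
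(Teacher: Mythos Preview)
Your proposal is correct and follows essentially the same route as the paper: the same add-and-subtract of $\bH$ and integration by parts (using $\diver(\bM+\bH)=0$, $\diver\bv=0$, $\bv|_\Gamma=0$) for \eqref{eq2.2}, and the same H\"older/Gagliardo--Nirenberg/Sobolev-embedding arguments for \eqref{eq2.3}--\eqref{eq2.5} and the definition of $M_0$. The only difference is cosmetic: for \eqref{eqt2.2} the paper simply cites \cite[Lemma~1]{Kamel}, whereas you supply the short direct argument via the Levi--Civita identity, which is exactly what that reference does.
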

\begin{proof}
    Please see Appendix \ref{Appendix-1}.
\end{proof}
Before we procedd further we make the following observations. 
\begin{remark}
From \eqref{eq2.2} and \eqref{Eq4.5a}, we easily see that $M_1(\bM,\bH,\bH)= 0$, for all $\bM, \bH \in V_1$.
\end{remark}
Now we define a trilinear map $M_2: V_2 \times V\times \mathbb{H}^1 \to \mathbb{R}$ by 
$$ M_2(\bu,\bB, \psi)
:= \int_{\mathcal{O}} [\curl (\bu \times \bB)] \cdot \psi \, \d x,\; \forall \bB \in V_2,\, \bu \in V \text{ and } \psi \in \mathbb{H}^1.$$ 
We collect some properties of the trilinear map $M_2$ in the following lemma whose proof is given in Appendix \ref{Appendix-1}.
\begin{lemma}\label{lem1B}
There exists a bilinear operator $\tilde{M}_2: V \times V_2 \to V'_2$ such that
\begin{itemize}
\item for any $\bu \in V,~ \bB, \psi \in V_2$ or any $\psi \in V_1$, we have
		\begin{equation}\label{eq2.9B}
			\langle \tilde{M}_2(\bu,\bB), \psi \rangle_{V_2',V_2} = M_2(\bu,\bB, \psi)= - b(\bB,\psi,\bu) - b(\bu,\psi,\bB).
		\end{equation}
\item  There exists a positive constant $\tilde{C_2}=\tilde{C_2}(\mathcal{O})>0$ such that
	\begin{equation}\label{eq2.9}
	 \|\tilde{M}_2(\bu,\bB)\|_{V'_2} \leq \tilde{C_2} |\bB|^\frac{1}{4} \|\bB\|_{V_2}^\frac{3}{4} |\bu|^\frac{1}{4} |\nabla \bu|^\frac{3}{4}, \quad \forall \bu \in V, ~ \bB \in V_2. 	 
	\end{equation}   
\end{itemize}
\end{lemma}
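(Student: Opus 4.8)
The plan is to establish the identity \eqref{eq2.9B} by a direct computation using the vector calculus identity for $\curl(\bu\times\bB)$, then to read off the estimate \eqref{eq2.9} from the resulting expression in terms of the trilinear form $b$. First I would recall that for smooth fields $\bu$ and $\bB$ one has the pointwise identity
\[
\curl(\bu\times\bB) = \bu\,(\diver\bB) - \bB\,(\diver\bu) + (\bB\cdot\nabla)\bu - (\bu\cdot\nabla)\bB .
\]
Taking $\bu\in V$ so that $\diver\bu=0$, and $\bB\in V_2 \subset H(\diver^0,\mathcal O)$ so that $\diver\bB=0$, the first two terms vanish, leaving $\curl(\bu\times\bB) = (\bB\cdot\nabla)\bu - (\bu\cdot\nabla)\bB$. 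Testing against $\psi$ and integrating gives formally $M_2(\bu,\bB,\psi) = b(\bB,\bu,\psi) - b(\bu,\bB,\psi)$; since $\psi$ is divergence-free (it lies in $V_2$, or in $V_1$ one must be slightly more careful) one can integrate by parts in $b(\bB,\bu,\psi)$ using \eqref{eq2.6}-type antisymmetry, $b(\bB,\bu,\psi) = -b(\bB,\psi,\bu)$, and similarly $b(\bu,\bB,\psi) = -b(\bu,\psi,\bB)$, yielding exactly
\[
M_2(\bu,\bB,\psi) = -b(\bB,\psi,\bu) - b(\bu,\psi,\bB).
\]
This has to be justified first on the dense subspace of smooth fields and then extended by continuity, which is why it is convenient to take the right-hand side $-b(\bB,\psi,\bu)-b(\bu,\psi,\bB)$ as the \emph{definition} of $\langle\tilde M_2(\bu,\bB),\psi\rangle$ and check that it agrees with the integral expression for $M_2$ on smooth fields and that it extends to $\psi\in V_1$ (the cancellation of boundary terms uses $\bu\cdot\bn = 0$ on $\Gamma$, which holds since $\bu\in V$, and $\psi\times\bn = 0$ on $\Gamma$, which holds since $\psi\in V_1\subset H_{\bn}$; these are precisely the conditions that kill the boundary contribution $\int_\Gamma (\bu\times\bB)\cdot(\psi\times\bn)$).

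For the estimate \eqref{eq2.9}, I would bound each of the two trilinear terms by the Navier-Stokes-type inequality already recorded in the excerpt, namely $|b(\bu,\bv,\bw)| \le C|\bu|^{1/4}|\nabla\bu|^{3/4}|\bv|^{1/4}|\nabla\bv|^{3/4}|\nabla\bw|$. Thus
\[
|b(\bB,\psi,\bu)| \le C|\bB|^{1/4}|\nabla\bB|^{3/4}|\psi|^{1/4}|\nabla\psi|^{3/4}|\nabla\bu|
\]
and
\[
|b(\bu,\psi,\bB)| \le C|\bu|^{1/4}|\nabla\bu|^{3/4}|\psi|^{1/4}|\nabla\psi|^{3/4}|\nabla\bB|.
\]
On $V_2$ the norm $\|\cdot\|_{V_2}$ is equivalent to the graph norm built from \eqref{Eq2.2}, which dominates both $|\nabla\bB|$ (via \eqref{eq2.1}) and $|\bB|$; so $|\nabla\bB|^{3/4} \le C\|\bB\|_{V_2}^{3/4}$ and $|\nabla\bB| \le C\|\bB\|_{V_2} \le C|\bB|^{1/4}\|\bB\|_{V_2}^{3/4}$ up to constants (using $|\bB|\le\|\bB\|_{V_2}$). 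Putting the two bounds together and using $|\psi|^{1/4}|\nabla\psi|^{3/4}\le\|\psi\|_{V_2}$, and that $|\nabla\bu|\le |\bu|^{1/4}|\nabla\bu|^{3/4}\cdot(|\nabla\bu|/|\bu|)^{1/4}$ is not quite what I want — instead I simply note $|\nabla\bu| = |\bu|^{0}|\nabla\bu|^{1} $ and rebalance: since we only need the form $|\bu|^{1/4}|\nabla\bu|^{3/4}$ on the right, I use $|\nabla\bu|\le C|\bu|^{1/4}|\nabla\bu|^{3/4}$ only if $|\nabla\bu|\le C|\bu|$, which fails in general. The cleaner route is to keep $|\nabla\bu|$ where it appears and observe that in the first term $|\bB|^{1/4}|\nabla\bB|^{3/4}\le\|\bB\|_{V_2}$ while $|\psi|^{1/4}|\nabla\psi|^{3/4}$ is bounded by $\|\psi\|_{V_2}$; taking the operator norm means testing over $\|\psi\|_{V_2}\le 1$, so
\[
\|\tilde M_2(\bu,\bB)\|_{V_2'} \le C\big(|\bB|^{1/4}|\nabla\bB|^{3/4}|\nabla\bu| + |\bu|^{1/4}|\nabla\bu|^{3/4}|\nabla\bB|\big),
\]
and then using $|\nabla\bu|\le |\bu|^{1/4}|\nabla\bu|^{3/4}(|\nabla\bu|/|\bu|)^{1/4}$ is still awkward, so for the stated symmetric form $|\bB|^{1/4}\|\bB\|_{V_2}^{3/4}|\bu|^{1/4}|\nabla\bu|^{3/4}$ I would instead bound $|\nabla\bu|$ by interpolating against the $V$-norm — but on $V$ the norm is $|\nabla\cdot|$, so $|\nabla\bu|=\|\bu\|_V$ and one just accepts a slightly different-looking but equivalent bound; matching the paper's exact exponents requires using $|\nabla\bB|\le\|\bB\|_{V_2}\le|\bB|^{1/4}\|\bB\|_{V_2}^{3/4}$ (valid since $|\bB|\le\|\bB\|_{V_2}$) in the second term and $|\nabla\bu|\le\|\bu\|_V = |\nabla\bu|$ handled by Poincaré $|\bu|\le C|\nabla\bu|$ so that $|\nabla\bu|\le C|\bu|^{1/4}|\nabla\bu|^{3/4}$ — this last step is exactly where Poincaré's inequality on $V$ (Dirichlet boundary) saves the day. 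Hence \eqref{eq2.9} follows.

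The main obstacle I expect is not the estimate, which is routine once the identity is in hand, but rather the careful justification of \eqref{eq2.9B} for test functions $\psi$ in the larger space $V_1$ (not just $V_2$): one must verify that the integration by parts producing $-b(\bB,\psi,\bu)-b(\bu,\psi,\bB)$ from $\int_{\mathcal O}[\curl(\bu\times\bB)]\cdot\psi$ incurs no boundary term, which relies on the vector identity $\int_{\mathcal O}\curl\bF\cdot\psi = \int_{\mathcal O}\bF\cdot\curl\psi + \int_\Gamma(\bF\times\bn)\cdot\psi$ applied with $\bF=\bu\times\bB$, together with $\psi\times\bn=0$ on $\Gamma$ (or $\bu\cdot\bn=0$, depending on which integration-by-parts one performs), and on approximating $\bu,\bB,\psi$ by smooth fields in the respective norms so that both sides pass to the limit — the bilinear estimate \eqref{eq2.9} is precisely what guarantees the limit on the right-hand side is legitimate and defines $\tilde M_2$ as a bounded operator into $V_2'$. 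I would carry this out by first proving everything for $\bu,\bB\in\mathcal V$ and $\psi$ smooth, then extending in $\bB$ and $\psi$ by density and in $\bu$ last, citing the density of $\mathcal V$ in $V$ and of smooth fields in $V_1$.
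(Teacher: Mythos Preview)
Your derivation of the identity \eqref{eq2.9B} is correct and is exactly the paper's route: expand $\curl(\bu\times\bB)$ via the vector-calculus identity, use $\diver\bu=\diver\bB=0$, obtain $M_2(\bu,\bB,\psi)=b(\bB,\bu,\psi)-b(\bu,\bB,\psi)$, and apply the antisymmetry of $b$ (the boundary terms vanish because $\bu|_\Gamma=0$).

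For the estimate \eqref{eq2.9}, however, there is a real gap. You apply the trilinear bound in the form
\[
|b(\bB,\psi,\bu)|\le C\,|\bB|^{1/4}|\nabla\bB|^{3/4}\,|\psi|^{1/4}|\nabla\psi|^{3/4}\,|\nabla\bu|,
\]
which places a full power of $|\nabla\bu|$ on the right, and then attempt to convert $|\nabla\bu|$ into $|\bu|^{1/4}|\nabla\bu|^{3/4}$ ``by Poincar\'e''. But Poincar\'e gives $|\bu|\le C|\nabla\bu|$, hence $|\bu|^{1/4}|\nabla\bu|^{3/4}\le C|\nabla\bu|$ --- the \emph{opposite} direction. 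The inequality $|\nabla\bu|\le C|\bu|^{1/4}|\nabla\bu|^{3/4}$ would require $|\nabla\bu|\le C|\bu|$, a reverse Poincar\'e, which is false in general. The same obstruction appears in your second term with $|\nabla\bB|$.

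The fix, and this is precisely what the paper does, is not to use that form of the trilinear estimate at all. Since in $b(\bB,\psi,\bu)=\int_{\mathcal O}\bB_i\,\partial_i\psi_j\,\bu_j$ the derivative sits on the \emph{middle} argument $\psi$, a direct H\"older inequality gives
\[
|b(\bB,\psi,\bu)|\le \|\bB\|_{\mathbb L^4}\,|\nabla\psi|\,\|\bu\|_{\mathbb L^4},
\]
and identically for $|b(\bu,\psi,\bB)|$. Now Gagliardo--Nirenberg $\|\cdot\|_{\mathbb L^4}\le C|\cdot|^{1/4}|\nabla\cdot|^{3/4}$ applied to $\bB$ and $\bu$, together with $|\nabla\bB|\le C\|\bB\|_{V_2}$ and $|\nabla\psi|\le C\|\psi\|_{V_2}$ from \eqref{eq2.1}, yields \eqref{eq2.9} immediately, with no Poincar\'e step required. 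The point is simply to let the gradient fall on the test function $\psi$, which is then absorbed into the $V_2'$--$V_2$ duality.
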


In the next few lemmas, we introduce  several bilinear and continuous mappings that are also necessary for the analysis of  the stochastic BT-ECREs.
\begin{lemma}\label{lem:Bilinear-R0}
There exists a bilinear map $R_0(\cdot,\cdot):V \times H(\curl,\mathcal{O}) \to V'$ such that
	\begin{equation*}
		\langle R_0(\bu, \bw), \tilde{\bv} \rangle_{V',V}= \int_{\mathcal{O}} [\curl(\curl \bu - 2 \bw)] \cdot \tilde{\bv} \, \d x, \quad \bu \in V \cap \mathbb{H}^2, \, \tilde{\bv} \in V, \, \bw \in \mathbb{H}^1,
	\end{equation*}
and that
	\begin{equation}\label{eq2.15}
		\|R_0(\bu, \bw)\|_{V'} 
		\leq |\nabla \bu| + 2 |\curl \bw|, \quad \bu \in V \cap \mathbb{H}^2, ~ \bw \in H(\curl,\mathcal{O}).
	\end{equation}
	
\end{lemma}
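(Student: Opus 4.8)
The plan is to \emph{define} $R_0$ directly through an integrated-by-parts (weak) form and then verify that this definition reproduces the stated formula on the smooth regime $\bu\in V\cap\mathbb{H}^2$, $\bw\in\mathbb{H}^1$. First I would introduce, for $\bu\in V$, $\bw\in H(\curl,\mathcal{O})$ and $\tilde{\bv}\in V$, the form
\[
\ell_{\bu,\bw}(\tilde{\bv}):=(\curl\bu,\curl\tilde{\bv})-2\,(\curl\bw,\tilde{\bv}).
\]
All three factors lie in $\mathbb{L}^2$: indeed $\curl\bu\in\mathbb{L}^2$ because $\bu\in\mathbb{H}_0^1$, $\curl\tilde{\bv}\in\mathbb{L}^2$ likewise, and $\curl\bw\in\mathbb{L}^2$ by definition of $H(\curl,\mathcal{O})$. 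Hence $\ell_{\bu,\bw}(\tilde{\bv})$ is well defined and $(\bu,\bw)\mapsto\ell_{\bu,\bw}$ is bilinear.

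Next I would establish continuity and the bound \eqref{eq2.15} simultaneously. Using Cauchy--Schwarz together with the identity $|\curl\bz|=|\nabla\bz|$ valid for any $\bz\in\mathbb{H}_0^1$ with $\diver\bz=0$ (so in particular for $\bz\in V$, where moreover $\|\bz\|_V=|\nabla\bz|$), and the Poincaré inequality $|\tilde{\bv}|\le\|\tilde{\bv}\|_V$ on $V$, I obtain
\[
|\ell_{\bu,\bw}(\tilde{\bv})|\le|\curl\bu|\,|\curl\tilde{\bv}|+2\,|\curl\bw|\,|\tilde{\bv}|\le\big(|\nabla\bu|+2\,|\curl\bw|\big)\,\|\tilde{\bv}\|_V .
\]
Thus $\ell_{\bu,\bw}\in V'$, and the Riesz representation theorem yields a unique element $R_0(\bu,\bw)\in V'$ with $\langle R_0(\bu,\bw),\tilde{\bv}\rangle_{V',V}=\ell_{\bu,\bw}(\tilde{\bv})$ for all $\tilde{\bv}\in V$, and with $\|R_0(\bu,\bw)\|_{V'}\le|\nabla\bu|+2|\curl\bw|$. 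Bilinearity of $R_0:V\times H(\curl,\mathcal{O})\to V'$ is inherited from that of $\ell$.

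It then remains to check consistency with the strong formula. For $\bu\in V\cap\mathbb{H}^2$ and $\bw\in\mathbb{H}^1$ one has $\curl\bu\in\mathbb{H}^1$ and $\curl\bw\in\mathbb{L}^2$, so $\curl(\curl\bu-2\bw)\in\mathbb{L}^2$ and $\int_{\mathcal{O}}[\curl(\curl\bu-2\bw)]\cdot\tilde{\bv}\,\d x$ is meaningful for $\tilde{\bv}\in V$. Applying the Green formula $\int_{\mathcal{O}}(\curl\bF)\cdot\bG\,\d x=\int_{\mathcal{O}}\bF\cdot(\curl\bG)\,\d x+\int_{\Gamma}(\bF\times\bn)\cdot\bG\,\d S$ first with $\bF=\curl\bu,\ \bG=\tilde{\bv}$ and then with $\bF=\bw,\ \bG=\tilde{\bv}$, and using that the trace of $\tilde{\bv}$ on $\Gamma$ vanishes so that both boundary integrals drop out, I get $(\curl\curl\bu,\tilde{\bv})=(\curl\bu,\curl\tilde{\bv})$ and $(\curl\bw,\tilde{\bv})=(\bw,\curl\tilde{\bv})$. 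Consequently
\[
\int_{\mathcal{O}}[\curl(\curl\bu-2\bw)]\cdot\tilde{\bv}\,\d x=(\curl\bu,\curl\tilde{\bv})-2\,(\curl\bw,\tilde{\bv})=\langle R_0(\bu,\bw),\tilde{\bv}\rangle_{V',V},
\]
which is the asserted representation.

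The argument is essentially elementary and I do not anticipate any genuine obstacle; the only points deserving a little care are the regularity bookkeeping in the last step (one needs $\bu\in\mathbb{H}^2$ for $\curl\curl\bu\in\mathbb{L}^2$, and $\tilde{\bv}\in\mathbb{H}^1$ with vanishing trace so the boundary terms in Green's formula disappear) and the compatibility of the Poincaré constant with the normalisation $\|\cdot\|_V=|\nabla\cdot|$ adopted for $V$; should that constant exceed $1$ for the domain at hand, \eqref{eq2.15} holds with an extra harmless multiplicative constant, which does not affect any later use of the lemma.
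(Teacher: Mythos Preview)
Your proof is correct and follows essentially the same approach as the paper: define the bilinear form weakly, bound it, and verify consistency with the strong expression on smooth data. The only cosmetic difference is that the paper rewrites $\curl\curl\bu=-\Delta\bu$ (using $\diver\bu=0$) and takes $(\nabla\bu,\nabla\tilde{\bv})$ as the weak form, whereas you use $(\curl\bu,\curl\tilde{\bv})$ directly; since $(\curl\bu,\curl\tilde{\bv})=(\nabla\bu,\nabla\tilde{\bv})$ for $\bu,\tilde{\bv}\in V$, the two are identical, and your remark about the Poincar\'e constant is a fair caveat that the paper glosses over.
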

\begin{proof}
We prove this lemma in Appendix \ref{Appendix-1}.
\end{proof}
\begin{lemma}\label{Lem:Bilinear-R1}
\begin{itemize} 
\item There exists a bilinear map $R_1: H(\curl,\mathcal{O}) \times \mathbb{L}^4 \to V'$ such that 
		\begin{align*}
			\langle R_1(\bH, \bh), \bv \rangle_{V',V}
			&= \int_{\mathcal{O}} (\curl \bH(x) \times \bh(x)) \cdot \bv(x) \, \d x, \quad & \bH\in H(\curl, \mathcal{O}), ~ \bh \in \mathbb{L}^4, \, \bv \in V,  \\
			|\langle R_1(\bH, \bh), \bv \rangle_{V',V}| 
			&\leq C(\mathcal{O}) |\curl\bH| \|\bh\|_{\mathbb{L}^4} |\nabla \bv|, \quad & \bH\in H(\curl, \mathcal{O}),~ \bh \in \mathbb{L}^4, \, \bv \in V.
		\end{align*}   
\item The map $ R_2: V \times \mathbb{H}^1 \to \mathbb{L}^{2}$  defined by  
		\begin{equation*}
			( R_2(\bu,\bw), \phi )= \int_{\mathcal{O}} (\curl \bu(x) - 2 \bw(x)) \cdot \phi(x) \, \d x, ~ \bu \in V, \, \bw \in \mathbb{L}^2, \, \phi \in \mathbb{L}^2,  
		\end{equation*}
satisfies \begin{equation}\label{Eq:Ineq-R2}
			|R_2(\bu,\bw)| 
			\leq C (|\curl\bu| + |\bw|), \quad \forall \bu \in V, \, \bw \in \mathbb{L}^2.
		\end{equation}
\item Let $R_3:  \mathbb{L}^4 \times \mathbb{L}^4 \to \mathbb{L}^{2}$ be  defined by 
		\begin{equation*}
			(R_3(\bM,\bH), \boldsymbol{\psi})= \int_{\mathcal{O}} (\bM \times \bH) \cdot \boldsymbol{\psi} \,\d x, \quad \bM, \,\bH\in \mathbb{L}^4, \,\boldsymbol{\psi} \in \mathbb{L}^2.
		\end{equation*}
Then, it verifies 
		\begin{equation}\label{Eq:Ineq-R3}
			|R_3(\bM, \bH)| \le C \|\bM\|_{\mathbb{L}^4} \| \bH\|_{\mathbb{L}^4}, \quad \bM, \,\bH\in \mathbb{L}^4. 
		\end{equation}		
\end{itemize}
\end{lemma}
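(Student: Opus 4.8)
The plan is to treat the three maps uniformly: in each case the candidate map is literally a pointwise algebraic operation (a vector cross product, or the affine combination $\curl\bu - 2\bw$), so the real content of the lemma is only that this operation sends the indicated spaces into $\mathbb{L}^2$ (respectively, defines a bounded linear functional on $V$), together with the displayed norm estimate; bilinearity is then immediate from bilinearity of the cross product and of $(\bu,\bw)\mapsto\curl\bu-2\bw$. The analytic input is Hölder's inequality, the pointwise bound $|\ba\times\bb|\le|\ba|\,|\bb|$ for the Euclidean cross product, the Sobolev embedding $\mathbb{H}^1\hookrightarrow\mathbb{L}^4$ valid in dimension three on the bounded domain $\mathcal{O}$, and the Poincaré inequality on $V$, which gives $\|\bv\|_{\mathbb{L}^4}\le C(\mathcal{O})\,|\nabla\bv|$ for $\bv\in V$.

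For $R_1$ I would fix $\bH\in H(\curl,\mathcal{O})$ and $\bh\in\mathbb{L}^4$ and, for $\bv\in V$, use the identity $(\curl\bH\times\bh)\cdot\bv=\curl\bH\cdot(\bh\times\bv)$ pointwise to get $\int_{\mathcal{O}}|(\curl\bH\times\bh)\cdot\bv|\,\d x\le|\curl\bH|\,\||\bh|\,|\bv|\|_{\mathbb{L}^2}\le|\curl\bH|\,\|\bh\|_{\mathbb{L}^4}\|\bv\|_{\mathbb{L}^4}$ by Cauchy--Schwarz followed by Hölder with exponents $(2,4,4)$. Combined with $\|\bv\|_{\mathbb{L}^4}\le C(\mathcal{O})\,|\nabla\bv|$, this shows $\bv\mapsto\int_{\mathcal{O}}(\curl\bH\times\bh)\cdot\bv\,\d x$ is a bounded linear functional on $V$, hence is represented by a unique element $R_1(\bH,\bh)\in V'$ with $\|R_1(\bH,\bh)\|_{V'}\le C(\mathcal{O})\,|\curl\bH|\,\|\bh\|_{\mathbb{L}^4}$; linearity of the integrand in $(\bH,\bh)$ then yields that $R_1$ is bilinear and continuous, and the pointwise bound before taking the supremum over $\bv$ gives the stated inequality $|\langle R_1(\bH,\bh),\bv\rangle_{V',V}|\le C(\mathcal{O})\,|\curl\bH|\,\|\bh\|_{\mathbb{L}^4}\,|\nabla\bv|$. (This mirrors the argument for $R_0$ in Lemma \ref{lem:Bilinear-R0}.)

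For $R_2$: since $\bu\in V\subset\mathbb{H}^1$ has $\curl\bu\in\mathbb{L}^2$ and $\bw\in\mathbb{L}^2$, the vector $\curl\bu-2\bw$ lies in $\mathbb{L}^2$, so setting $R_2(\bu,\bw):=\curl\bu-2\bw$ represents the stated pairing against every $\phi\in\mathbb{L}^2$; the triangle inequality gives $|R_2(\bu,\bw)|\le|\curl\bu|+2|\bw|$, which is \eqref{Eq:Ineq-R2} (and $\mathbb{H}^1\subset\mathbb{L}^2$ so the estimate holds on the larger domain). For $R_3$: for $\bM,\bH\in\mathbb{L}^4$ the pointwise bound $|\bM\times\bH|\le|\bM|\,|\bH|$ together with Hölder ($\tfrac12=\tfrac14+\tfrac14$) gives $\bM\times\bH\in\mathbb{L}^2$ with $\|\bM\times\bH\|_{\mathbb{L}^2}\le\|\bM\|_{\mathbb{L}^4}\|\bH\|_{\mathbb{L}^4}$, so $R_3(\bM,\bH):=\bM\times\bH$ works and satisfies \eqref{Eq:Ineq-R3}. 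Bilinearity of $R_2$ and $R_3$ is clear from bilinearity of $\curl\bu-2\bw$ and of the cross product.

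I do not expect any genuine obstacle here: the only nontrivial ingredient is the three-dimensional embedding $\mathbb{H}^1\hookrightarrow\mathbb{L}^4$, and it is needed only for $R_1$; everything else reduces to Hölder's and the triangle inequalities. The points that merely require care are stating the Hölder exponents correctly and keeping track of the fact that the constant for $R_1$ depends on $\mathcal{O}$ through the Poincaré and Sobolev constants, whereas the constants in the $R_2$ and $R_3$ estimates are absolute.
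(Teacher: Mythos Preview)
Your proposal is correct and matches the paper's approach: the paper explicitly says that $R_1$ is handled by the same argument as for $R_0$ (which is exactly the H\"older/Sobolev/Poincar\'e route you describe) and that $R_2$, $R_3$ are left to the reader as easy consequences of H\"older and the triangle inequality. There is nothing to add.
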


\begin{proof}
One can prove the existence and properties of the bilinear map $R_1$ by using similar arguments as those used in Lemma \ref{lem:Bilinear-R0}; hence, the proof is left to the reader.
The properties of the maps $R_2$ and $R_3$ are easy to prove,  so we also leave that to the reader. 
\end{proof}
\subsection{Some linear maps and their properties}	
Let $\mathcal{P}: \mathbb{L}^2 \to H$ be the Helmhotz-Leray projection from  $\mathbb{L}^2$ onto $H$. We denote by $A=-\mathcal{P} \Delta$ the Stokes operator with domain $D(A)= V \cap \mathbb{H}^2$. It is well-known (see, for instance, \cite[Chapter I, Section 2.6]{Temam2}) that there exists an orthonormal basis $\{\upsilon_j\}_{j=1}^\infty$ of $H$ consisting of the eigenfunctions of the Stokes operator $A$. It is also well-known that $D(A^{1/2})= V$ (see \cite[p. 33]{Foias}). \newline
Now, we define the bilinear map $a_1: \mathbb{H}_0^1 \times \mathbb{H}_0^1 \to \mathbb{R}$ by setting
	\begin{equation*}
		a_1(\bw,\bv):=\int_{\mathcal{O}} \nabla \bw: \nabla \bv \, \d x, \quad \bw, \, \bv \in \mathbb{H}_0^1.
\end{equation*} 
Through an application of Poincar\'e's inequality, we can easily prove that this bilinear map is $\mathbb{H}_0^1$-continuous and $\mathbb{H}_0^1$-coercive. Hence, by applying the Riesz lemma and Lax-Milgram theorem, there exists a unique isomorphism $\mathcal{A}_1: \mathbb{H}_0^1 \to \mathbb{H}^{-1}$, such that
	\begin{equation*}
		a_1(\bw,\bv):= \langle \mathcal{A}_1 \bw, \bv \rangle_{\mathbb{H}^{-1}, \mathbb{H}_0^1}, \quad \bw, \, \bv \in \mathbb{H}_0^1.
	\end{equation*} 
Next, we define an unbounded linear operator $A_1$ in $\mathbb{L}^2$ as follows:
	\begin{eqnarray*}
		\begin{cases}
			D(A_1)&=\{\bw \in \mathbb{H}_0^1: A_1 \bw \in \mathbb{L}^2\}, \\
			A_1 \bw&= \mathcal{A}_1 \bw, \quad \bw \in D(A_1).
		\end{cases}
	\end{eqnarray*}
Under our assumption on $\mathcal{O}$, it is known (see, for instance, \cite[Section 2, p. 64]{Temam3}) that $A_1$ and $D(A_1)$ can be characterized by 
	\begin{eqnarray}
		\begin{cases}
			D(A_1) &:=\mathbb{H}^2 \cap \mathbb{H}_0^1, \\
			A_1 \bw &:= -\Delta \bw, \quad \bw \in D(A_1).
		\end{cases}
	\end{eqnarray}
Let $X\in \{\mathbb{H}_0^1, V_1 \}$ and  $R_5: H(\diver,\mathcal{O})\to X' $ be the linear map defined by 
$$ 
\langle R_5(\bw), \boldsymbol{\phi} \rangle_{X^\prime, X } 
= -\int_{\mathcal{O}} \diver \bw \diver \boldsymbol{\phi} \, \d x,~\forall \bw \in H(\diver,\mathcal{O}), ~ \boldsymbol{\phi} \in X .$$ 
The linear map $R_5$ is continuous and satisfies:  
    \begin{align}
    	\lVert R_5(\bw) \rVert_{X^\prime }\leq& |\diver \bw|, \hspace{2cm} \forall \bw \in H(\diver, \mathcal{O}),\label{Eq:Ineq-R5} \\
    	\langle R_5(\bw), \boldsymbol{\phi} \rangle_{X^\prime, X} 
    	=& \int_{\mathcal{O}} \nabla \diver \bw \cdot \boldsymbol{\phi} \,\d x, \quad \forall \bw \in H(\diver,\mathcal{O}) \cap \mathbb{H}^2, ~ \boldsymbol{\phi} \in X. \nonumber
    \end{align}  
Now let $Y\in \{V_1, V_2\}$ and  $\hat{a}_1$ be the bilinear form defined on $H(\curl, \mathcal{O}) \times V_2$ by 
\begin{equation*}
	\hat{a}_1(\bH,\boldsymbol{\psi}) = \int_{\mathcal{O}} \curl \bH(x) \cdot \curl \boldsymbol{\psi}(x) \,\d x, \, \bH \in H(\curl, \mathcal{O}), \, \boldsymbol{\psi} \in Y.
\end{equation*}
It is clear that $\hat{a}_1$ is a continuous bilinear map. Hence, there exists a continuous linear map  $R_6: H(\curl,\mathcal{O}) \to Y^\prime  $  satisfying 
$$ 
\langle R_6(\bH), \boldsymbol{\phi} \rangle_{Y^\prime, Y}
= \hat{a}_1(\bH,\boldsymbol{\phi}),~\forall \bH \in  H(\curl,\mathcal{O}), ~ \boldsymbol{\phi} \in Y.$$ 
Furthermore, by the Green formula, see \cite[Theorem 2.11]{Girault+Raviart} or \cite[Equation (IV.13)]{Boyer+Fabrie}, we have 
\begin{equation}\label{Eq:Ineq-R6}
	\begin{aligned}
		\langle R_6(\bH), \boldsymbol{\psi}\rangle_{Y^\prime, Y}
		=&	\hat{a}_1(\bH,\boldsymbol{\psi}) 
		\\
		=& \int_{\mathcal{O}} (\curl\curl \bH(x)) \cdot  \boldsymbol{\psi}(x) \, \d x, \quad \bH \in H(\mathrm{curl},\mathcal{O}) \cap \mathbb{H}^2, ~ \boldsymbol{\psi} \in Y, 
		\\
		\lVert R_6(\boldsymbol{\psi}) \rVert_{Y^\prime}
		\leq& |\curl \boldsymbol{\psi}|, \quad \hspace{3.3cm} \boldsymbol{\psi} \in  H(\curl,\mathcal{O}).
	\end{aligned}
\end{equation}  
\section{The Stochastic BT-ECREs and our main result} \label{sect3}
Since in this work we will study a stochastic model governing the motion of electrically conductive ferrofluids, in the following lines, we explicitly define the force injecting energy into the system we will analyze. \newline
Let $(\Omega, \mathcal{F}, \mathbb{F} = \{\mathcal{F}_t\}_{t\in[0,T]},\mathbb{P})$ be a filtered probability space satisfying the usual conditions (namely it is complete, right-continuous and $\mathcal{F}_0$ contains all null sets), with $T>0$ being a given final time. Let $U_1:=l^2(\mathbb{N})$ and $U_2:=\mathbb{R}^N$, where $l^2(\mathbb{N})$ denotes the space of all sequences $(e_n)_{n\in\mathbb{N}}\subset \mathbb{R}$ such that $\sum_{n=1}^\infty |e_n|^2<\infty$. It is a Hilbert space with the scalar product given by $(h,e)_{l^2}:=\sum_{n=1}^{\infty}h_n e_n$, where $h=(h_n)$ and $e=(e_n)$  belong to $l^2(\mathbb{N})$.  For convenience, we fix once and for all two complete standard orthonormal systems $\{e_k^1\}_{k \in \mathbb{N}}$ on $U_1$ and $\{e_k^2\}_{k \in \mathbb{N}}$ on $U_2$.
Throughout the paper, the processes $\beta_k^i$, $ k \in \mathbb{N}$, $ i = 1,\ldots,3$, and $\beta_k^4$, $ k \in \{1,\ldots,N\}$ are iid standard real Bronwnian motions, and $\mathbb{E}$ is the mathematical expectation with respect to the probability measure $\mathbb{P}$. For every $p, r \in [1,+\infty]$ and for every Banach space $X$, the symbols $L^r(\Omega;X)$ and $L^p(0,T;X)$ indicate the usual spaces of strongly measurable Bochner-integrable functions on $\Omega$ and $(0,T)$, respectively.\newline
Let $p,r\in[1,\infty]$. The space  $L^p(\Omega,\mathcal{F},\mathbb{P};L^r(0,T;X))$ consists of all random functions $v: \Omega \times [0,T] \times \mathcal{O}\to L^r(0,T;X)$ such that $v$ is measurable w.r.t $(\omega,t)$ and for all $t$, $v$ is measurable w.r.t $\mathcal{F}_t$. We furthermore  endow this space with the norm
    \begin{equation*}
    	\|v\|_{L^p(\Omega,\mathcal{F},\mathbb{P};L^r(0,T;X))} = \left(\mathbb{E} \|v\|_{L^r(0,T;X)}^p \right )^{1/p};
    \end{equation*}
when $r=\infty$, then the norm in the space $L^p(\Omega,\mathcal{F},\mathbb{P};L^\infty(0,T;X))$ is given by  
    \begin{equation*}
    	\|v\|_{L^p(\Omega,\mathcal{F},\mathbb{P};L^\infty(0,T;X))} = \left(\mathbb{E} \|v\|_{L^\infty(0,T;X)}^p \right )^{1/p}.
    \end{equation*} 
Let $K$ be a separable Hilbert space, $\tilde{W}$ be a $K$-cylindrical Wiener process on $(\Omega, \mathcal{F},\mathbb{F},\mathbb{P})$ and $\tilde{H}$ be a Hilbert space. We denote by $\mathcal{M}^2(\Omega \times [0,T]; L_2(K, \tilde{H}))$ the space of all equivalence classes of $\mathbb{F}$-progressively measurable processes $\phi:\Omega \times [0,T] \to L_2(K, \tilde{H})$ satisfying
\begin{equation*}
  \mathbb{E} \int_0^T \|\phi(t)\|_{L_2(K, \tilde{H})}^2 \d t < \infty.
\end{equation*}
For any $\phi \in \mathcal{M}^2(\Omega \times [0,T]; L_2(K, \tilde{H}))$, the process $M$ defined by $M(t) = \int_0^t \phi(s) d\tilde{W}(s)$, with $t\in[0,T]$ is a $\tilde{H}$-valued martingale. Moreover, the following It\^o isometry holds true
   \begin{equation}\label{eq3.1}
   	\mathbb{E} \left( \left\| \int_0^t \phi(s) \d \tilde{W}(s)  \right\|_{\tilde{H}}^2 \d s \right) = \mathbb{E} \int_0^t \|\phi(s)\|_{L_2(K,\tilde{H})}^2 \d s,\ \ \forall t\in[0,T],
   \end{equation}
and we have the following Burkholder-Davis-Gundy inequality
\begin{equation}\label{Eq:BDG}
\mathbb{E} \sup_{s\in[0,t]} \left\| \int_0^s \phi(\tau) \d \tilde{W}(\tau)  \right\|_{\tilde{H}}^p \leq C(p) \mathbb{E} \left( \int_0^t \|\phi(s)\|_{L_2(K,\tilde{H})}^2 \d s \right)^{p/2}, ~\forall p \in (1,\infty),~ \forall t \in [0,T].
\end{equation}
We refer the reader to \cite[Chapter 4]{Prato} for more detail on the theory of stochastic integration. \newline
Now, we introduce the following standing assumptions.
\begin{assumption} \label{Hypo2}
Throughout of this work, we suppose that we are given some functions $b_k,\, f_k,\, \sigma_k:\mathcal{O} \to \mathbb{R}^3$, $ k \in \mathbb{N}$, $j_k:\mathcal{O} \to \mathbb{R}^3$, $ k \in \{1,\ldots,N\}$ satisfying the following set of conditions:
\item[(H1)] For $ k \in \mathbb{N}$, $b_k := (b_k^{(1)}, b_k^{(2)},b_k^{(3)}),\, f_k := (f_k^{(1)}, f_k^{(2)},f_k^{(3)}),\, \sigma_k := (\sigma_k^{(1)}, \sigma_k^{(2)},\sigma_k^{(3)}) \in \mathbb{W}^{1,\infty}$, $j_k := (j_k^{(1)}, j_k^{(2)},j_k^{(3)}) \in \mathbb{W}^{1,\infty}$. 	
\item [(H2)] For $k \in \{1,\ldots,N\}, \, j_k$ is divergence free vector fields, i.e., $\diver j_k =0$, and $j_k =0$ on $\Gamma$. 
\item[(H3)] We assume also that the vector fields $b_k,\, f_k,\, j_k$ satisfy:
\begin{equation}\label{eq3.3}
	\begin{aligned}
		& C_5 : = \sum\limits_{k=1}^{\infty} \left(\|b_k\|_{\mathbb{L}^\infty}^2 + 
		\|\diver b_k\|_{L^\infty(\mathcal{O})}^2 \right) < \infty, \\
		& C_6 : = \sum\limits_{k=1}^{\infty} \left(\|f_k\|_{\mathbb{L}^\infty}^2 + 
		\|\diver f_k\|_{L^\infty(\mathcal{O}}^2 \right) < \infty,  \\
		& C_7 : = \sum\limits_{k=1}^{N} \|j_k\|_{\mathbb{W}^{1,\infty}}^2  < \infty,  \\
		& C_8 := \sum\limits_{k=1}^{\infty} \|\sigma_k\|_{\mathbb{L}^\infty}^2 < \infty,   
	\end{aligned}
\end{equation}
and
\begin{equation}\label{eq3.4}
	\begin{aligned}
		& \sum_{i,j=1}^{3} \left(2 \delta_{ij} - \sum_{k=1}^{\infty} b_k^{(i)}(x) b_k^{(j)}(x) \right) \zeta_i \zeta_j \geq c_1 |\zeta|^2, \\ 
		&\sum_{i,j=1}^{3} \left(2 \delta_{ij} - \sum_{k=1}^{\infty} f_k^{(i)}(x) f_k^{j}(x)\right) \zeta_i \zeta_j \geq c_2 |\zeta|^2, \\
		& \sum_{i,j = 1}^{3} \left(2 \delta_{ij} - \sum_{k=1}^{N} j_k^{(i)}(x) j_k^{(j)}(x)\right) \zeta_i \zeta_j \geq c_3 |\zeta|^2, \quad \zeta = (\zeta_1,\zeta_2,\zeta_3) \in \mathbb{R}^3,
	\end{aligned}
\end{equation}
for some $c_1, \, c_2, \, c_3 \in (0,2]$.  
\item [(H4)] For all $k\in\mathbb{N}$, $\sigma_k$ is a divergence free vector fields, that is $\diver\sigma_k = 0$ in $\mathcal{O}$. Furthermore, $\sigma_k = 0$ on $\Gamma$, and $\sigma_k$ satisfies the following inequality: 
\begin{equation}\label{eqt3.5}
	\sum_{i,j = 1}^{3} \left(2 \delta_{ij} - \sum_{k=1}^{\infty} \sigma_k^{(i)}(x) \sigma_k^{(j)}(x) \right) \zeta_i \zeta_j \geq c_4 |\zeta|^2, ~ \zeta = (\zeta_1,\zeta_2,\zeta_3) \in \mathbb{R}^3, ~ \text{for some} ~ c_4 \in (0,2].
\end{equation}
\end{assumption}
Let us put 
	\begin{equation}\label{Eqt3.6}
		\begin{aligned}
			F_1(\bu) \bh &= \sum_{k=1}^{\infty} [(b_k \cdot \nabla) \bu] h_k, \quad &\bu \in V,\quad \bh= (h_k) \in U_1:=l^2(\mathbb{N}), \\
			F_2(\bw) \zeta&= \sum_{k=1}^{\infty} [(f_k \cdot \nabla) \bw] \zeta_k, \quad &\bw \in \mathbb{H}_0^1(\mathcal{O}), \quad \zeta= (\zeta_k) \in U_1, \\
			G(\bM) \bv&= \sum_{k=1}^{\infty} [ (\sigma_k \cdot \nabla) \bM] v_k, \quad &\bM \in V_1, \quad \bv= (v_k) \in U_1, \\
			F_3(\bH) \by&= \sum_{k=1}^{N} [(j_k \cdot \nabla) \bH ] y_k, \quad &\bH \in V_1, \quad \by= (y_k) \in U_2:=\mathbb{R}^N.
		\end{aligned}
	\end{equation}  
In the following proposition, we will give some properties of the maps $ F_1: V \to L_2(U_1,H)$, $ F_2: \mathbb{H}_0^1 \to L_2(U_1,\mathbb{L}^2)$, $F_3 : V_1 \to L_2(U_2,\mathbb{L}^2)$ and $G: V_1 \to L_2(U_1,\mathbb{L}^2)$. 
\begin{proposition}\label{propo-1}
The maps $F_1,, F_2,, F_3$, and $G$ are Lipschitz continuous, and the following inequalities hold:
\begin{equation}\label{eq3.9}
\begin{aligned}
\|F_1(\bu)\|_{L_2(U_1,H)}^2 
&= \sum_{k=1}^{\infty} \sum_{i,j=1}^{3} (b_k^{(i)}
\partial_i \bu, b_k^{(j)} \partial_j \bu)\leq (2-c_1) \lvert \nabla \bu \rvert^2, 
\\
\Vert F_2(\bw) \Vert_{L_2(U_1,\mathbb{L}^2)}^2 
&= \sum_{k=1}^{\infty} \sum_{i,j=1}^{3}
(f_k^{(i)} \partial_i \bw, f_k^{(j)} \partial_j\bw)\leq (2-c_2) \lvert \nabla \bw \rvert^2,
    \\
\Vert G(\bM) \Vert_{L_2(U_1,\mathbb{L}^2)}^2 
&= \sum_{k=1}^{\infty} \sum_{i,j=1}^{3} (\sigma_k^{(i)}
\partial_i \bM, \sigma_k^{(j)} \partial_j \bM) \leq (2-c_3) \lvert \nabla \bM \rvert^2,
	\\
\Vert F_3(\bH) \Vert_{L_2(U_2,\mathbb{L}^2)}^2 
&= \sum_{k=1}^{N} \sum_{i,j=1}^{3}
(j_k^{(i)} \partial_i \bH, \partial_j \bH) \leq (2-c_4) \lvert \nabla \bH \rvert^2,   
\end{aligned}
\end{equation}
for some $c_1, c_2, c_3,c_4 \in (0,2]$. Moreover, the maps $F_1,\, F_2,\, F_3$ and $G$ extend to linear maps $F_1: H \to L_2(U_1,V')$, $F_2 : \mathbb{L}^2 \to L_2(U_1,\mathbb{H}^{-1})$, $G: \mathbb{L}^2 \to L_2(U_1,V'_1)$ and $F_3 : \mathbb{L}^2 \to L_2(U_2,V'_2)$, respectively, and
\begin{equation}\label{Eq3.10}
	\begin{aligned}
		\|F_1(\bu)\|_{L_2(U_1,V')}^2 &\leq 2C_1 |\bu|^2, \quad &\forall \bu \in H,
		\\
		\|F_2(\bw)\|_{L_2(U_1,\mathbb{H}^{-1})}^2 &\leq 2C_6 |\bw|^2, \quad &\forall \bw \in \mathbb{L}^2, 
		\\
		\|G(\bM)\|_{L_2(U_1,V'_1)}^2 &\leq C_8 |\bM|^2, \quad &\forall \bM \in \mathbb{L}^2 \cap H_{\bn}, 
		\\
		\|F_3(\bH)\|_{L_2(U_2,V'_2)}^2 &\leq C_7 |\bH|^2, \quad &\forall \bH \in \mathbb{L}^2 \cap H_{\bn}.
	\end{aligned}
\end{equation}
\end{proposition}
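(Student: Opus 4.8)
The plan is to verify the five displayed identities/estimates in order, all of which follow from the definitions in \eqref{Eqt3.6}, the pointwise algebraic conditions \eqref{eq3.4}--\eqref{eqt3.5}, and the summability bounds \eqref{eq3.3}. First I would compute, for $\bu\in V$ with the orthonormal basis $\{e_k^1\}$ of $U_1$, the Hilbert--Schmidt norm
\[
\|F_1(\bu)\|_{L_2(U_1,H)}^2=\sum_{k=1}^\infty \bigl| (b_k\cdot\nabla)\bu\bigr|_{\mathbb L^2}^2
=\sum_{k=1}^\infty\sum_{i,j=1}^3\int_{\mathcal O} b_k^{(i)}b_k^{(j)}\,\partial_i\bu\cdot\partial_j\bu\,\d x .
\]
Interchanging the $k$-sum with the integral (justified by (H3)) and using the first inequality in \eqref{eq3.4} with $\zeta=\partial_i\bu$-type test vectors, i.e. applying it to each column of $\nabla\bu$, gives
\[
\sum_{k=1}^\infty\sum_{i,j}b_k^{(i)}(x)b_k^{(j)}(x)\,\partial_i\bu\cdot\partial_j\bu
\le 2|\nabla\bu(x)|^2-c_1|\nabla\bu(x)|^2,
\]
so that integration over $\mathcal O$ yields $\|F_1(\bu)\|_{L_2(U_1,H)}^2\le (2-c_1)|\nabla\bu|^2$. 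One should note here that $F_1(\bu)$ actually takes values in $H$: each $(b_k\cdot\nabla)\bu$ is in $\mathbb L^2$, and since $\bu$ is divergence free and we may mollify, $\mathcal P$ is not even needed if one checks the divergence directly — but in any case $F_1(\bu)=\mathcal P\circ(\text{the above})$ is the clean way to state it. The identical argument with $f_k,\sigma_k,j_k$ in place of $b_k$ and \eqref{eq3.4}$_2$, \eqref{eqt3.5}, \eqref{eq3.4}$_3$ respectively yields the remaining three lines of \eqref{eq3.9}.

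Next I would prove Lipschitz continuity. Since each $F_i$ and $G$ is \emph{linear} in its argument (this is clear from \eqref{Eqt3.6}), Lipschitz continuity is the same as boundedness, which is exactly \eqref{eq3.9}: e.g. $\|F_1(\bu_1)-F_1(\bu_2)\|_{L_2(U_1,H)}=\|F_1(\bu_1-\bu_2)\|_{L_2(U_1,H)}\le(2-c_1)^{1/2}|\nabla(\bu_1-\bu_2)|$, and likewise for the others. So this step is immediate once \eqref{eq3.9} is in hand.

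For the extended maps and \eqref{Eq3.10}, the idea is to move the derivative off of the (now merely $L^2$) argument and onto the test function by integration by parts. Fix $\bu\in H$ and $\bv\in V$; then for each $k$,
\[
\bigl\langle (b_k\cdot\nabla)\bu,\bv\bigr\rangle
= -\int_{\mathcal O}\bu\cdot\bigl[(b_k\cdot\nabla)\bv + (\diver b_k)\,\bv\bigr]\,\d x,
\]
which is bounded by $\bigl(\|b_k\|_{\mathbb L^\infty}+\|\diver b_k\|_{L^\infty}\bigr)|\bu|\,\|\bv\|_{\mathbb H^1}$ (here one uses that $\bv\in V\subset\mathbb H_0^1$ so boundary terms vanish); hence $\|F_1(\bu)e_k^1\|_{V'}^2\lesssim(\|b_k\|_{\mathbb L^\infty}^2+\|\diver b_k\|_{L^\infty}^2)|\bu|^2$, and summing over $k$ with \eqref{eq3.3} gives $\|F_1(\bu)\|_{L_2(U_1,V')}^2\le 2C_5|\bu|^2$ — note the coefficient should read $C_5$ (there appears to be a typographical slip writing $C_1$), matching the constant in (H3). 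The same integration-by-parts computation with $f_k$ against $\bv\in\mathbb H_0^1$ gives the $\mathbb H^{-1}$ bound with constant $2C_6$. For $G$ and $F_3$ the divergence-free and vanishing-on-$\Gamma$ hypotheses (H4) and (H2) are what make the computation clean: since $\diver\sigma_k=0$, we get $\langle(\sigma_k\cdot\nabla)\bM,\boldsymbol\phi\rangle=-\int_{\mathcal O}\bM\cdot(\sigma_k\cdot\nabla)\boldsymbol\phi\,\d x$ with no zeroth-order term, and $\sigma_k=0$ on $\Gamma$ kills the boundary integral when testing against $\boldsymbol\phi\in V_1$; estimating by $\|\sigma_k\|_{\mathbb L^\infty}|\bM|\,\|\boldsymbol\phi\|_{V_1}$ and summing with $C_8$ from \eqref{eq3.3} yields $\|G(\bM)\|_{L_2(U_1,V_1')}^2\le C_8|\bM|^2$; the argument for $F_3$ against $\boldsymbol\phi\in V_2$ (using $\diver j_k=0$, $j_k|_\Gamma=0$) gives the bound with $C_7$ and the finite sum over $k\in\{1,\dots,N\}$.

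The only genuinely delicate point — and the step I would flag as the main obstacle — is the interchange of the infinite $k$-sum with the spatial integral and, relatedly, the verification that the series $\sum_k (b_k\cdot\nabla)\bu\,e_k^1$ etc. actually define Hilbert--Schmidt operators rather than merely giving a finite value to a formal expression. This is handled by monotone convergence applied to the nonnegative integrand $\sum_{k=1}^M\sum_{i,j}b_k^{(i)}b_k^{(j)}\partial_i\bu\cdot\partial_j\bu$ (each partial sum is a legitimate quadratic form, bounded above pointwise by $2|\nabla\bu|^2$ thanks to \eqref{eq3.4}, which in particular forces $\sum_k b_k^{(i)}b_k^{(i)}\le 2$ a.e.), together with dominated convergence to pass the limit inside; the $\mathbb L^\infty$-summability in \eqref{eq3.3} provides the uniform domination. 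Once this measure-theoretic bookkeeping is done, everything else is the routine integration-by-parts and Cauchy--Schwarz indicated above.
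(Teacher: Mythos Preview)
Your proposal is correct and follows essentially the same route as the paper: compute the Hilbert--Schmidt norm directly and apply the pointwise quadratic-form bound \eqref{eq3.4}/\eqref{eqt3.5} for \eqref{eq3.9}, deduce Lipschitz continuity from linearity plus boundedness, and for \eqref{Eq3.10} integrate by parts against a test function in $V,\mathbb H_0^1,V_1,V_2$ to transfer the derivative, then sum using \eqref{eq3.3}. Your observation that the constant in \eqref{Eq3.10}$_1$ should be $C_5$ rather than $C_1$ is correct---the paper's own proof reproduces this same slip.
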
 
\begin{proof}
    The proof is given in Appendix \ref{Appendix-1}.
\end{proof}

\noindent
Now, using the previous notations, the model \eqref{Eq4.1}-\eqref{eqt4.3} can be formally written in the following abstract form
	\begin{subequations}\label{Eq4.3}
		\begin{align}
			& \bu(t) + \int_0^t [B_0(\bu(s),\bu(s)) + \nu A \bu(s) + \alpha R_0(\bu(s), \bw(s))] \d s - \mu_0 \int_0^t  M_0(\bM(s),\bH(s)) \d s \notag 
			\\
			& \quad = \bu_0 + \mu_0 \int_0^t R_1(\bH(s), \bH(s)) \d s + \int_0^t F_1(\bu(s)) \d \beta^1(s) ~ \text{in}~ V', \label{eqt4.1a} 
			\\
			& \bw(t) + \int_0^t [B_1(\bu(s),\bw(s)) - (\lambda_1 + \lambda_2) R_5(\bw(s)) + \lambda_1 A_1 \bw(s) - 2 \alpha R_2(\bu(s),\bw(s))] \d s \notag 
			\\ 
			&\quad = \bw_0 + \mu_0 \int_0^t R_3(\bM(s),\bH(s)) \, \d s + \int_0^t F_2(\bw(s)) \d \beta^2(s) ~ \text{in}~ \mathbb{H}^{-1}, \label{eqt4.2a}
			\\
			& \bM(t) + \int_0^t [B_2(\bu(s),\bM(s)) - R_3(\bw(s),\bM(s))] \d s + \frac{1}{\tau} \int_0^t (\bM(s) - \chi_0 \bH(s)) \d s \notag
			\\
			& = \bM_0 - \lambda \int_0^t \curl \curl\bM(s) \d s + \lambda \int_0^t \nabla \diver \bM(s) \d s + \int_0^t G(\bM(s)) \d \beta^3(s) ~ \text{in}~ V'_1, \label{eqt4.3a}
			\\
			&  \bB(t) + \frac{1}{\sigma} \int_0^t R_6(\bH(s)) \, \d s - \int_0^t  \tilde{M}_2(\bu(s),\bB(s)) \d s = \bB_0  + \int_0^t F_3(\bH(s))  \d \beta^4(s) ~ \text{in}~ V'_2, \label{eqt4.4a}   
		\end{align}
	\end{subequations}		
for all $t \in[0,T]$ with $\bB= \mu_0 (\bM +\bH)$. 
The notion of weak martingale solution to system \eqref{Eq4.3} is defined as follows:
\begin{definition}\label{def2}
By a weak martingale solution of the system \eqref{Eq4.3} on $[0,T]$, we mean a system consisting of a complete and filtered probability space $(\bar{\Omega}, \bar{\mathcal{F}} ,\bar{\mathbb{F}},\bar{\mathbb{P}})$, with the filtration $\bar{\mathbb{F}}= (\bar{\mathcal{F}}_t)_{t \in [0,T]}$ satisfying the usual conditions, and $\bar{\mathbb{F}}$-progressively measurable processes
$(\bar{\bu}(t), \bar{\bw}(t), \bar{\bM}(t), \bar{\bH}(t), \bar{\beta}^i(t))_{t \in [0,T]}$, $i=1,\ldots,4$ such that
		
\item[1.] $ \{\bar{\beta}^i(t)\}_{t \in [0,T]} $ is a real-valued Wiener process, with $i \in \{1,2,3,4\}$,
		
\item[2.] $(\bar{\bu}, \bar{\bw}, \bar{\bM}, \bar{\bH}): \bar{\Omega} \times [0,T] \to V \times \mathbb{H}_0^1 \times V_1 \times V_1$ and $ \bar{\mathbb{P}}$-a.s.
	\begin{equation}
		\begin{aligned}
			\bar{\bu} &\in \mathcal{C}_w([0,T];H) \cap L^\infty(0,T;H) \cap L^2(0,T;V), \\
			\bar{\bw} &\in \mathcal{C}_w([0,T];\mathbb{L}^2) \cap L^\infty(0,T;\mathbb{L}^2) \cap L^2(0,T;\mathbb{H}_0^1), \\
			\bar{\bM}, \, \bar{\bH} &\in \mathcal{C}_w([0,T];\mathbb{L}^2) \cap L^\infty(0,T;\mathbb{L}^2) \cap L^2(0,T; V_1),	
		\end{aligned}
\end{equation}  
\item[3.] the following identities hold
		\begin{align}\label{eq4.6}
				\langle \bar{\bu}(t),\bv \rangle_{V^\prime, V}
				&= \langle\bu_0, \bv \rangle_{V^\prime, V} + \int_{Q_t}(\bar{\bu} \cdot \nabla) \bv \cdot \bar{\bu} \, \d x \, \d s - \nu \int_{Q_t} \nabla \bar{\bu} : \nabla \bv \, \d x \, \d s \notag
				\\
				&\quad - \mu_0 \int_{Q_t} [(\bar{\bM} + \bar{\bH})\cdot \nabla]\bv \cdot \bar{\bH}(s) \, \d x \, \d s
				\\
				& \quad  - \alpha \int_{Q_t} (\curl \bar{\bu} - 2\bar{\bw}) \cdot \curl \bv \, \d x \, \d s + \left \langle \int_0^t F_1(\bar{\bu}(s))\d \bar{\beta}^1(s), \bv\right\rangle_{V^\prime, V}, \notag
		\end{align}
\begin{equation}\label{eqt4.7}
\begin{aligned}
\langle\bar{\bw}(t), \psi \rangle_{\mathbb{H}^{-1}, \mathbb{H}_0^1} 
&= \langle\bw_0, \psi \rangle_{\mathbb{H}^{-1}, \mathbb{H}_0^1}  + \int_{Q_t} [(\bar{\bu} \cdot \nabla) \psi \cdot \bar{\bw} - (\lambda_1 + \lambda_2)  \diver\bar{\bw} \diver \psi] \,\d x \,\d s \\
&\quad + \int_{Q_t} (- \lambda_1 \nabla \bar{\bw} : \nabla \psi + 2 \alpha \bar{\bu} \cdot \curl\psi  - 4\alpha \bar{\bw} \cdot \psi) \, \d x \,\d s \\ 
&\quad  + \mu_0 \int_{Q_t} (\tilde{\bM} \times \bar{\bH}) \cdot \psi \, \d x \,\d s + \left \langle \int_0^t F_2(\bar{\bw}(s))\d \bar{\beta}^2(s), \psi\right\rangle_{\mathbb{H}^{-1}, \mathbb{H}_0^1}, 
\end{aligned}
\end{equation}  		
\begin{equation}\label{eq4.8}
			\begin{aligned}
				\langle\bar{\bM}(t), \psi_1\rangle_{V^\prime_1, V_1}
				&= \langle\bM_0, \psi_1\rangle_{V^\prime_1, V_1} + \int_{Q_t} (\bar{\bu} \cdot \nabla) \psi_1 \cdot \bar{\bM} \, \d x \, \d s + \int_{Q_t} (\bar{\bw} \times \bar{\bM}) \cdot \psi_1 \, \d x \, \d s
				\\
				&\quad - \frac{1}{\tau} \int_{Q_t} (\bar{\bM} -\chi_0 \bar{\bH}) \cdot \psi_1 \, \d x \, \d s - \lambda \int_{Q_t} \curl\bar{\bM}(s) \cdot \curl \psi_1 \, \d x \, \d s 
				\\ 
				&\quad -\lambda \int_{Q_t} \diver \bar{\bM} \cdot \diver\psi_1 \, \d x \, \d s + \left\langle \int_0^t G(\bar{\bM}(s))\d\bar{\beta}^3(s), \psi_1\right\rangle_{V^\prime_1, V_1} , 
			\end{aligned}
		\end{equation}

\begin{equation}\label{eq4.9}
 \begin{aligned}
	\langle \bar{\bB}(t), \psi_2\rangle_{V^\prime_2, V_2} 
	   &= \langle \bB_0, \psi_2 \rangle_{V^\prime_2, V_2} -\frac{1}{\sigma} \int_{Q_t} \curl \bar{\bH} \cdot \curl \psi_2 \, \d x \, \d s
				\\
		&\quad + \int_{Q_t} [\curl(\bar{\bu} \times \bar{\bB})]\cdot \psi_2 \, \d x \, \d s + \left \langle \int_0^t F_3(\bar{\bH}(s))\d\bar{\beta}^4(s), \psi_2 \right \rangle_{V^\prime_2, V_2},
 \end{aligned}
\end{equation}  
for any $\bv \in V$, $ \psi \in \mathbb{H}_0^1$, $\psi_1 \in V_1$ and $\psi_2 \in V_2$, for almost $t \in [0,T]$ and $\bar{\mathbb{P}}$-a.s.         
\end{definition}
\begin{remark}
Note that the equations \eqref{eq4.6}-\eqref{eq4.9} imply that almost surely \\
$\by(\cdot):= (\bar{\bu}(\cdot), \bar{\bw}(\cdot), \bar{\bM}(\cdot), \bar{\bH}(\cdot)) \in \mathcal{C} ([0,T]; V' \times (\mathbb{H}^{-1}(\mathcal{O}) \times V'_1 \times V'_2)$, and since $\by(\cdot)$ is also bounded on $\mathbb{H}$, then it is almost surely in $\mathcal{C}_w([0,T]; \mathbb{H})$. This follows by arguing as in \cite[Chap. 3 Par. 3]{Temam1}.
\end{remark}	
The existence of such a weak martingale solution is ensured by the following main theorem.
\begin{theorem}\label{theo 1}
Let $\mathcal{O} \subset \mathbb{R}^3$ be a simply connected bounded domain of class $\mathcal{C}^\infty$. 
Let $\ell^\ast>0$ be a fixed large positive constant such that
\begin{equation}
 \ell^\ast> \max\left(\frac{1}{\mu_0 \sqrt{\sigma C_0}}, \sqrt{\frac{\mu_0+1}{\mu_0 \sigma C_0}},\frac{\mu_0+1}{\mu_0}, 2 \cdot 3^{16} [C(4)]^2 \right),
\end{equation}	
where $C(4)$ is the constant in \eqref{Eq:BDG} with $p=4$. \newline
We choose our constants $c_1,\, c_2, c_3, c_4 \, \in (0,2]$ as follows: 
\begin{equation}\label{eq4.12}
  \begin{aligned}
    2 - \frac{\mu_0 +1}{\sigma \mu_0 C_0 (\ell^\ast)^2} &< c_4 \leq 2, 
    \quad 
    2 - \frac{1}{\sigma \mu_0^2 (\mu_0+1) (\ell^\ast)^2 C_0} &< c_3 \leq 2, 
    \\
    \frac{2\ell^\ast +2 -2\nu}{\ell^\ast +1} &< c_1 \leq 2, \quad \frac{2\ell^\ast +2 -2\lambda_1}{\ell^\ast +1} < c_2 \leq 2.
  \end{aligned}
\end{equation}
Furthermore, we assume that Assumption \ref{Hypo2} is satisfied and  $(\bu_0, \bw_0,\bM_0,\bH_0) \in \mathbb{H}$. Then, for any $\lambda>0$ such that
\begin{equation}\label{eqt4.14}
\max \left(\frac{1}{\sigma \mu_0^2 \ell^\ast} - \sqrt{\frac{1}{(\sigma \mu_0^2 \ell^\ast)^2} - \frac{(\mu_0+1)(2-c_3)C_0}{\sigma \mu_0^2}}, \frac{\ell^\ast (2-c_3) C_0}{2} + \frac{(2-c_4)C_0 \ell^\ast}{2\mu_0(\mu_0+1)}\right)< \lambda < \frac{1}{\sigma \mu_0^2 \ell^\ast},
\end{equation}
there exists a solution $(\bar{\bu}(t), \bar{\bw}(t), \bar{\bM}(t), \bar{\bH}(t), \bar{\beta}^1(t), \bar{\beta}^2(t), \bar{\beta}^3(t), \bar{\beta}^4(t))_{t \in [0,T]}$ of Problem \eqref{Eq4.3} in the sense of Definition \ref{def2}.
\end{theorem}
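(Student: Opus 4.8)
The plan is to construct approximate solutions by a Faedo--Galerkin scheme, establish uniform stochastic a priori bounds, pass to the limit by a stochastic compactness argument, and identify the limit with a weak martingale solution in the sense of Definition~\ref{def2}. \emph{Step 1 (Galerkin approximation).} Fix countable orthonormal bases of $H$, $\mathbb{L}^2$, $V_1$ (and of the $\bB$-space $V_2$) consisting respectively of the eigenfunctions of the Stokes operator $A$, of $A_1=-\Delta$ with homogeneous Dirichlet data, and of the self-adjoint operator induced by the inner product $[\cdot\,;\cdot]$ of \eqref{Eq2.2}; let $P_n$ be the orthogonal projections onto the span of the first $n$ elements. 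Projecting the abstract system \eqref{Eq4.3} onto these finite-dimensional spaces produces, for each $n$, a system of It\^o SDEs for $(\bu_n,\bw_n,\bM_n,\bH_n)$ whose coefficients are locally Lipschitz with polynomial growth, because all the bilinear maps $B_0,B_1,B_2,M_0,\tilde M_2,R_0,R_1,R_2,R_3,R_5,R_6$ and the diffusion maps $F_1,F_2,F_3,G$ were shown in Section~\ref{sect2} and in Proposition~\ref{propo-1} to be continuous and polynomially bounded. Hence there is a unique local solution up to a (possibly finite) explosion time; the a priori bounds of Step 2 show it is in fact global.

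\emph{Step 2 (uniform a priori estimates).} This is the heart of the proof. One applies It\^o's formula to a suitably weighted energy functional $\mathcal{E}_n(t)=|\bu_n(t)|^2+|\bw_n(t)|^2+\mu_0\ell^\ast\big(|\bM_n(t)|^2+|\bH_n(t)|^2\big)$, and afterwards to $\mathcal{E}_n(t)^2$. The convective nonlinearities and most of the magnetic coupling cancel against each other: $\langle B_0(\bu_n,\bu_n),\bu_n\rangle_{V',V}=0$ by \eqref{eq2.6}, the gyroscopic term satisfies $(\bw_n\times\bM_n,\bM_n)=0$, the Kelvin--Lorentz coupling vanishes on the diagonal, $M_1(\bM_n,\bH_n,\bH_n)=0$ (the Remark after Lemma~\ref{lem1}), the internal-rotation $\alpha$-terms collapse into $-\alpha|\curl\bu_n-2\bw_n|^2\le 0$, and the remaining convective contributions combine, via the representations \eqref{eq2.2}--\eqref{eqt2.2}, \eqref{eq2.9B} and the constraint $\diver\bB_n=0$, into a single residual Kelvin-type term estimated through \eqref{eq2.3} against the $V_1$-dissipation of $\bM_n$. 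What is then left to control are the genuine dissipation $\nu|\nabla\bu_n|^2+\lambda_1|\nabla\bw_n|^2+(\lambda_1+\lambda_2)|\diver\bw_n|^2+\mu_0\ell^\ast\lambda(|\curl\bM_n|^2+|\diver\bM_n|^2)+\tfrac{\mu_0\ell^\ast}{\sigma}|\curl\bH_n|^2$, the It\^o corrections $\tfrac12\|F_1(\bu_n)\|_{L_2}^2$ and their analogues bounded by \eqref{eq3.9}, and the exchange term $\tfrac{\chi_0}{\tau}(\bH_n,\bM_n)$. The sharp conditions \eqref{eq4.12} are chosen exactly so that $\tfrac12\|F_1(\bu_n)\|_{L_2}^2\le\tfrac{2-c_1}{2}|\nabla\bu_n|^2$ (and the others), after using \eqref{eq2.1} to bound $|\nabla\bM_n|^2$ by $|\bM_n|^2+|\curl\bM_n|^2+|\diver\bM_n|^2$, can be absorbed into the corresponding dissipation; and the size requirements on $\ell^\ast$ together with the admissible window \eqref{eqt4.14} for $\lambda$ make the remaining quadratic form in $(|\curl\bM_n|,|\curl\bH_n|)$ nonnegative, which explains the two competing lower bounds for $\lambda$ in \eqref{eqt4.14}. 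What survives is a Gronwall-type inequality; combined with the Burkholder--Davis--Gundy inequality \eqref{Eq:BDG}, whose constant $C(4)$ accounts for the factor $2\cdot 3^{16}[C(4)]^2$ in the hypothesis on $\ell^\ast$, it yields, uniformly in $n$, bounds for $\mathbb{E}\sup_{t\le T}\mathcal{E}_n(t)^2$ and for $\mathbb{E}\int_0^T\big(|\nabla\bu_n|^2+|\nabla\bw_n|^2+\|\bM_n\|_{V_1}^2+\|\bH_n\|_{V_1}^2\big)\,\d t$. In particular the Galerkin solutions are global in time.

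\emph{Step 3 (tightness and Skorokhod).} Using the equations once more together with the bounds of Step 2 and BDG for the stochastic integrals, one derives a uniform fractional-in-time estimate $\mathbb{E}\|\bu_n\|_{W^{\alpha,2}(0,T;V')}^2+\mathbb{E}\|\bw_n\|_{W^{\alpha,2}(0,T;\mathbb{H}^{-1})}^2+\mathbb{E}\|\bM_n\|_{W^{\alpha,2}(0,T;V_1')}^2+\mathbb{E}\|\bB_n\|_{W^{\alpha,2}(0,T;V_2')}^2\le C$ for some small $\alpha>0$. Combined with the Aubin--Lions--Simon compactness lemma this gives tightness of the laws of $(\bu_n,\bw_n,\bM_n,\bH_n)$ on $L^2(0,T;H)\times L^2(0,T;\mathbb{L}^2)\times L^2(0,T;\mathbb{L}^2)^2$, as well as on $\mathcal{C}([0,T];V'\times\mathbb{H}^{-1}\times V_1'\times V_2')$, on $L^2_w(0,T;V)\times L^2_w(0,T;\mathbb{H}_0^1)\times L^2_w(0,T;V_1)^2$, and on $\mathcal{C}_w([0,T];\mathbb{H})$; the laws of the (countably and finitely many) Brownian motions are trivially tight. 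By Prokhorov's theorem and the Jakubowski--Skorokhod representation theorem there exist a new probability space $(\bar{\Omega},\bar{\mathcal{F}},\bar{\mathbb{P}})$ and random variables $(\bar{\bu}_n,\bar{\bw}_n,\bar{\bM}_n,\bar{\bH}_n,\bar{\beta}_n)$ with the same laws as the original ones, converging $\bar{\mathbb{P}}$-almost surely, in the topology of the above product space, to a limit $(\bar{\bu},\bar{\bw},\bar{\bM},\bar{\bH},\bar{\beta})$ with $\bar{\beta}=(\bar{\beta}^1,\dots,\bar{\beta}^4)$.

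\emph{Step 4 (passage to the limit and conclusion).} Endow $\bar{\Omega}$ with the (augmented) filtration generated by $(\bar{\bu},\bar{\bw},\bar{\bM},\bar{\bH},\bar{\beta})$; one checks that each $\bar{\beta}^i$ is a standard real Wiener process with respect to it and that they are independent. Writing the Galerkin identities for $(\bar{\bu}_n,\bar{\bw}_n,\bar{\bM}_n,\bar{\bH}_n,\bar{\beta}_n)$ and letting $n\to\infty$, the linear terms converge by weak convergence in $L^2(0,T;V)$, $L^2(0,T;\mathbb{H}_0^1)$, $L^2(0,T;V_1)$; the nonlinear terms $B_0,B_1,B_2,M_0,\tilde M_2,R_0,R_1,R_2,R_3$ converge by combining the strong $L^2(0,T;\mathbb{L}^2)$ convergence with the uniform bounds and the product estimates of Section~\ref{sect2} (for instance \eqref{eq2.6a}, \eqref{eq2.3}--\eqref{eq2.4}, \eqref{eq2.9}, \eqref{eq2.15}); and the stochastic integrals converge by the usual argument identifying the limiting martingale and its quadratic variation, which relies on the Hilbert--Schmidt bounds \eqref{eq3.9}--\eqref{Eq3.10} and the Lipschitz continuity of Proposition~\ref{propo-1}. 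The uniform bounds and weak lower semicontinuity of norms give the regularity class required in item~2 of Definition~\ref{def2}, and the initial data are recovered since $\bar{\bu}_n(0)=P_n\bu_0\to\bu_0$, etc. Hence $\big(\bar{\Omega},\bar{\mathcal{F}},\bar{\mathbb{F}},\bar{\mathbb{P}},\bar{\bu},\bar{\bw},\bar{\bM},\bar{\bH},\bar{\beta}^1,\dots,\bar{\beta}^4\big)$ is a weak martingale solution of \eqref{Eq4.3}, which proves the theorem. The main difficulty is entirely in Step 2: the Kelvin force $\mu_0(\bM\cdot\nabla)\bH$ is only tamed because the Bloch--Torrey term $\lambda(\curl\curl\bM-\nabla\diver\bM)$ supplies $V_1$-control of $\bM$, and closing the estimate requires the precise algebraic interplay between $\ell^\ast$, the admissible window \eqref{eqt4.14} for $\lambda$, and the sharp constraints \eqref{eq4.12} on $c_1,\dots,c_4$, so that every transport-noise It\^o correction and every magnetic coupling term is dominated by the available dissipation.
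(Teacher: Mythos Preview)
Your four-step scaffold matches the paper, but Step~2, which you correctly flag as the crux, contains a real gap. The energy functional you propose, $|\bu_n|^2+|\bw_n|^2+\mu_0\ell^\ast(|\bM_n|^2+|\bH_n|^2)$, is not the one that closes; the paper uses $\mathcal{E}_{tot}=|\bu_n|^2+|\bw_n|^2+|\bM_n|^2+\mu_0|\bH_n|^2$, with no $\ell^\ast$. The reason is that the Kelvin force leaves \emph{no} residual to be bounded via \eqref{eq2.3}: after applying It\^o to $|\bu_n|^2$, $|\bw_n|^2$, $|\bM_n|^2$ and deriving $\mu_0\,\d|\bH_n|^2$ from the $\bB$-equation through the constitutive relation, the representation \eqref{eqt2.2} on the $\bu$-side and the structure of the $\bH$-identity produce the pair $-2\int(\curl\bH_n\times\bB_n)\cdot\bu_n+2\mu_0\int(\bu_n\cdot\nabla)\bM_n\cdot\bH_n$ with opposite signs, and they cancel \emph{exactly}---but only with the weights $1$ and $\mu_0$. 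Inserting an extra $\ell^\ast$ on the magnetic block destroys this cancellation, and a genuine cubic remainder of size $\|\bM_n\|_{V_1}^{3/4}\|\bH_n\|_{V_1}\|\bu_n\|_{\mathbb{L}^4}$ coming from \eqref{eq2.3} cannot be absorbed by quadratic dissipation; the estimate would not close. The identity $M_1(\bM_n,\bH_n,\bH_n)=0$ you cite is true but beside the point: in the $\bu$-energy the test function is $\bu_n$, not $\bH_n$. What actually survives after the exact cancellation are the cross term $2\mu_0\lambda\int\curl\bM_n\cdot\curl\bH_n$ and the relaxation coupling $2(\chi_0+\mu_0)\tau^{-1}\int\bM_n\cdot\bH_n$, together with the It\^o corrections handled by \eqref{eq3.9} and \eqref{eq2.1}; Young's inequality on the cross term yields $\sigma\mu_0^2\lambda^2|\curl\bM_n|^2+\sigma^{-1}|\curl\bH_n|^2$, and the window \eqref{eqt4.14} is precisely the positivity condition on the net $\curl\bM_n$, $\diver\bM_n$ coefficients. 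The constant $\ell^\ast$ never enters the energy functional; it appears only in the hypotheses \eqref{eq4.12}--\eqref{eqt4.14} and in the $p=4$ moment bound (Lemma~\ref{lem6.4}), where raising the energy identity to the fourth power and applying BDG generates the factor $2\cdot 3^{16}[C(4)]^2$ multiplying $(2-c_i)^4$, and the constraint $\ell^\ast>2\cdot 3^{16}[C(4)]^2$ is what allows absorption.

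Two further discrepancies, less fatal. For the Galerkin basis of $V_1$ the paper uses the orthogonal splitting $V_1=V_2\oplus\mathcal{H}$ of Lemma~\ref{lem4} and builds $\bM_n,\bH_n$ with opposite gradient components, so that $\diver\bB_n=0$ holds at the discrete level; a generic $[\cdot\,;\cdot]$-eigenbasis does not enforce this, and the $\bB$-equation manipulations above (in particular $\langle\tilde M_2(\bu_n,\bB_n),\bB_n\rangle$) would not simplify. For tightness, the paper explicitly avoids $W^{\alpha,2}(0,T)$ bounds and instead proves translation-in-time estimates yielding the Aldous condition (Lemma~\ref{lem5} and its Corollary), then invokes the compactness criterion of \cite{Motyl1}; your fractional-Sobolev route is a legitimate alternative and would also deliver tightness, so this is a difference of method rather than a gap.
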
 
Before proceeding to the next result, we state the following important remark.
\begin{remark}\label{Rem:Limit-gamma}
Note that when  $c_i= 2, \,\,\forall i\in \{1,\ldots,4\}$, \textit{i.e.,} in the absence of noises,  the estimate \eqref{eqt4.14} will  lead to the assumption on $\sigma$ and $\mu_0$ in \cite[Theorem 2.4]{Aristide+Paul}, which enabled the authors of \cite{Aristide+Paul} to carry out the study of the limit $\lambda \to 0$. In fact,  when $c_i= 2, \,\,\forall i\in \{1,\ldots,4\}$, then we can take $\ell^\ast=\frac12$ and assume that $\lambda\in (0,1) $ which is true for either $\sigma=\mu_0=1$ or $\sigma\mu_0^2>2$. When some $c_i\neq 2$, then we do not know whether the estimates we obtained in this paper will enable us to pass to the limit $\lambda\to 0$, and hence we postpone that question to a subsequent paper.
\end{remark}

\begin{proposition}
Let the assumptions of Theorem \ref{theo 1} hold. Let $(\bar{\bu}(t), \bar{\bw}(t), \bar{\bM}(t), \bar{\bH}(t), \bar{\beta}^1(t), \\
\bar{\beta}^2(t), \bar{\beta}^3(t), \bar{\beta}^4(t))_{t \in [0,T]}$ be a weak solution of \eqref{Eq4.3} in the sense of Definition \ref{def2}. Then, there exist two stochastic processes $\bar{\bH}_a$ and $\bar{\bH}_d$ such that $\bar{\bH}= \bar{\bH}_a + \bar{\bH}_d$ and
\begin{equation}
	\begin{aligned}
		\bar{\bH}_a \in L^\infty(0,T;\mathbb{L}^2) \cap L^2(0,T;V), ~ &\bar{\mathbb{P}}\text{-a.s.},
		\\
		\bar{\bH}_d \in L^\infty(0,T;\mathbb{L}^2) \cap L^2(0,T;\mathbb{H}^1), ~ &\bar{\mathbb{P}} \text{-a.s.}, 
		\\
		\diver\bar{\bH}_a=0 \quad \text{and} \quad \curl\bar{\bH}_a= \curl\bar{\bH} \quad &\text{in} \quad Q, ~ \bar{\mathbb{P}}\text{-a.s.},
		\\
		\bar{\bH}_d= \nabla \bar{\varphi}_d, ~ \curl\bar{\bH}_d=0, ~ \diver\bar{\bH}_d= - \diver\bar{\bM}=0 \quad &\text{in} \quad Q, ~ \bar{\mathbb{P}} \text{-a.s.},
	\end{aligned}
\end{equation}
where  for almost every $t \in[0,T]$, $\nabla \bar{\varphi}_d(t) \in E(\mathcal{O})$ $\bar{\mathbb{P}}\text{-a.s.}$ and the potential $\bar{\varphi}_d$ solves $\bar{\mathbb{P}}\text{-a.s.}$ the following problem:
\begin{equation}
	\begin{cases}
		-\Delta \bar{\varphi}_d= \diver\bar{\bM} \quad &\text{in} \quad Q, \\
		\frac{\partial \bar{\varphi}_d}{\partial\bn}= - \bar{\bM} \cdot \bn \quad &\text{on} \quad \Sigma.
	\end{cases}
\end{equation}
Furthermore, for all $\psi\in H^1(\mathcal{O})$,
\begin{equation*}
	\int_{\mathcal{O}} \nabla \bar{\varphi}_d \cdot \nabla \psi \, \d x 
	= - \int_{\mathcal{O}} \bar{\bM} \cdot \nabla \psi \, \d x \quad \bar{\mathbb{P}}\text{-a.s.}
\end{equation*}
\end{proposition}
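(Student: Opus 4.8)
The plan is to construct the pair $(\bar{\bH}_a,\bar{\bH}_d)$ pathwise, for $\bar{\mathbb{P}}$-almost every $\omega$ and almost every $t\in[0,T]$, by solving an elliptic (demagnetising-potential) problem for $\bar{\varphi}_d$, and then to lift the construction to progressively measurable processes using continuity of the associated solution operator. First I would fix $\omega$ in a set of full $\bar{\mathbb{P}}$-measure and $t\in[0,T]$ outside a null set, so that $\bar{\bM}(\omega,t)\in V_1\subset\mathbb{H}^1$; in particular $\diver\bar{\bM}(\omega,t)\in L^2(\mathcal{O})$ and, by the trace theorem, $\bar{\bM}(\omega,t)\cdot\bn\in H^{1/2}(\Gamma)$. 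On the quotient space $\dot{H}^1(\mathcal{O}):=H^1(\mathcal{O})/\mathbb{R}$ the form $(\varphi,\psi)\mapsto\int_{\mathcal{O}}\nabla\varphi\cdot\nabla\psi\,\d x$ is continuous and, by the Poincar\'e--Wirtinger inequality, coercive, while $\psi\mapsto-\int_{\mathcal{O}}\bar{\bM}\cdot\nabla\psi\,\d x$ is a bounded linear functional on $H^1(\mathcal{O})$ vanishing on constants; Lax--Milgram then produces a unique $\bar{\varphi}_d=\bar{\varphi}_d(\omega,t)$, normalised by $\int_{\mathcal{O}}\bar{\varphi}_d\,\d x=0$, with
\begin{equation*}
	\int_{\mathcal{O}}\nabla\bar{\varphi}_d\cdot\nabla\psi\,\d x=-\int_{\mathcal{O}}\bar{\bM}\cdot\nabla\psi\,\d x,\qquad\forall\,\psi\in H^1(\mathcal{O}).
\end{equation*}
Integrating by parts shows this is exactly the weak form of $-\Delta\bar{\varphi}_d=\diver\bar{\bM}$ in $\mathcal{O}$ with $\partial_{\bn}\bar{\varphi}_d=-\bar{\bM}\cdot\bn$ on $\Gamma$; no separate compatibility condition is needed, since the natural one is just the divergence theorem applied to $\bar{\bM}\in H(\diver,\mathcal{O})$.

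Next I would set $\bar{\bH}_d:=\nabla\bar{\varphi}_d$ and $\bar{\bH}_a:=\bar{\bH}-\bar{\bH}_d$ and read off the required identities. Because $\nabla\bar{\varphi}_d\in\mathbb{L}^2$ and $\Delta\bar{\varphi}_d=-\diver\bar{\bM}\in L^2(\mathcal{O})$, we have $\bar{\bH}_d\in E_1(\mathcal{O})\subset E(\mathcal{O})$ with $\curl\bar{\bH}_d=\curl\nabla\bar{\varphi}_d=0$ and $\diver\bar{\bH}_d=\Delta\bar{\varphi}_d=-\diver\bar{\bM}$; consequently $\curl\bar{\bH}_a=\curl\bar{\bH}$ and, using $\bar{\bB}=\mu_0(\bar{\bM}+\bar{\bH})$ together with $\diver\bar{\bB}=0$ from \eqref{Eq4.5a},
\begin{equation*}
	\diver\bar{\bH}_a=\diver\bar{\bH}+\diver\bar{\bM}=\tfrac{1}{\mu_0}\diver\bar{\bB}=0 .
\end{equation*}
For the integrability claims I would combine the Lax--Milgram bound $|\nabla\bar{\varphi}_d(t)|\le|\bar{\bM}(t)|$ with elliptic regularity for the Neumann problem on the smooth domain $\mathcal{O}$, namely $\|\bar{\varphi}_d(t)\|_{H^2(\mathcal{O})}\le C\big(\|\diver\bar{\bM}(t)\|_{L^2(\mathcal{O})}+\|\bar{\bM}(t)\cdot\bn\|_{H^{1/2}(\Gamma)}\big)\le C\|\bar{\bM}(t)\|_{V_1}$, where \eqref{eq2.1} and the trace theorem are used. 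Since $\bar{\bM}\in L^\infty(0,T;\mathbb{L}^2)\cap L^2(0,T;V_1)$ by Definition \ref{def2}, this yields $\bar{\bH}_d\in L^\infty(0,T;\mathbb{L}^2)\cap L^2(0,T;\mathbb{H}^1)$ $\bar{\mathbb{P}}$-a.s., and subtracting from $\bar{\bH}\in L^\infty(0,T;\mathbb{L}^2)\cap L^2(0,T;V_1)$ gives the asserted spaces for $\bar{\bH}_a$; the variational identity for $\bar{\varphi}_d$ tested against arbitrary $\psi\in H^1(\mathcal{O})$ is precisely the defining relation above.

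To obtain progressively measurable processes, I would note that the solution map $\mathcal{S}:\mathbb{L}^2\to\dot{H}^1(\mathcal{O})$, $\bar{\bM}\mapsto\bar{\varphi}_d$, is bounded linear (hence continuous and Borel), and that by the previous step its restriction $\mathcal{S}:V_1\to\mathbb{H}^1$ is bounded linear as well, so $\bar{\bH}_d$ and $\bar{\bH}_a$ inherit progressive measurability from $\bar{\bM}$ along with the pathwise integrability just established. The step I expect to be the main obstacle is the boundary bookkeeping needed to place $\bar{\bH}_a(\cdot,t)$ in $V$: beyond $\diver\bar{\bH}_a=0$ and $\bar{\bH}_a\in\mathbb{H}^1$, one must check that $\bar{\bH}_a$ meets the boundary conditions in \eqref{eq4.2} characterising $V$ (cf.\ the description of the solenoidal spaces in Section \ref{sect2}). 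On $\Gamma$ one computes $\bar{\bH}_a\cdot\bn=\bar{\bH}\cdot\bn-\partial_{\bn}\bar{\varphi}_d=(\bar{\bH}+\bar{\bM})\cdot\bn=\tfrac{1}{\mu_0}\bar{\bB}\cdot\bn$; using that $\bar{\bB}$ extends by $\mu_0\bar{\bH}$ to a globally divergence-free field which is curl-free and decaying in the exterior of $\mathcal{O}$, the classical exterior magnetostatic argument, together with $\bar{\bH}\times\bn=0$ on $\Gamma$ (recall $\bar{\bH}\in V_1=H_{\bn}\cap\mathbb{H}^1$), gives $\bar{\bB}\cdot\bn=0$ on $\Gamma$ and hence a vanishing normal trace for $\bar{\bH}_a$. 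Assembling these facts for a.e.\ $t$ and $\bar{\mathbb{P}}$-a.e.\ $\omega$ yields the decomposition together with the stated elliptic problem and variational identity for $\bar{\varphi}_d$; the additional vanishing $\diver\bar{\bH}_d=-\diver\bar{\bM}=0$ recorded in the statement reflects the solenoidality $\diver\bar{\bM}=0$ satisfied by the weak solution under consideration, in which case $\bar{\varphi}_d$ is in fact harmonic with mean-zero Neumann datum and the argument above goes through verbatim.
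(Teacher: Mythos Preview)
Your approach coincides with the paper's in spirit---both produce the Helmholtz decomposition of $\bar{\bH}$ pathwise---but is parametrised differently. The paper applies the Simader--Sohr decomposition directly to $\bar{\bH}$, which immediately gives $\bar{\bH}_a\in H$ (so $\bar{\bH}_a\cdot\bn=0$ is automatic) and $\nabla\bar{\varphi}_d$ with $\Delta\bar{\varphi}_d=\diver\bar{\bH}=-\diver\bar{\bM}$ and $\partial_{\bn}\bar{\varphi}_d=\bar{\bH}\cdot\bn$; it then records $\bar{\bH}\cdot\bn=-\bar{\bM}\cdot\bn$. Your Lax--Milgram construction instead uses the functional $\psi\mapsto-\int_{\mathcal{O}}\bar{\bM}\cdot\nabla\psi$, which makes the boundary condition $\partial_{\bn}\bar{\varphi}_d=-\bar{\bM}\cdot\bn$ immediate but transfers the burden to checking $\bar{\bH}_a\cdot\bn=(\bar{\bH}+\bar{\bM})\cdot\bn=\mu_0^{-1}\bar{\bB}\cdot\bn=0$. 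Your added remarks on $H^2$-regularity of $\bar{\varphi}_d$ and on progressive measurability via linearity of the solution map are correct and more explicit than what the paper provides.

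The genuine gap is your justification of $\bar{\bB}\cdot\bn=0$ via an ``exterior magnetostatic argument''. The problem \eqref{Eq4.1}--\eqref{eqt4.3} and Definition~\ref{def2} are posed on the bounded domain $\mathcal{O}$ only; the remark in the introduction that $\bB=\mu_0\bH$ outside $\mathcal{O}$ is physical motivation and is not part of the weak formulation, so no exterior field, transmission condition, or decay hypothesis is available. That step does not go through. The fix is simply to Leray-project $\bar{\bH}$ rather than $\bar{\bM}$ (equivalently, use $\psi\mapsto\int_{\mathcal{O}}\bar{\bH}\cdot\nabla\psi$ in your Lax--Milgram step); then $\bar{\bH}_a\cdot\bn=0$ comes for free, exactly as in the paper, and the rest of your outline stands. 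Your instinct that the passage from $\bar{\bH}_a\cdot\bn=0$ to $\bar{\bH}_a\in V$ is where the boundary bookkeeping concentrates is well placed; the paper is equally brief at that point.
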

\begin{proof}
$(\bar{\bu}(t), \bar{\bw}(t), \bar{\bM}(t), \bar{\bH}(t), \bar{\beta}^1(t), 
\bar{\beta}^2(t), \bar{\beta}^3(t), \bar{\beta}^4(t))_{t \in [0,T]}$ be a weak solution of \eqref{Eq4.3} in the sense of Definition \ref{def2}. Then, by Definition \ref{def2},  $(\bar{\bu},\bar{\bw},\bar{\bM},\bar{\bH}) \in L^\infty(0,T;\mathbb{H})\cap L^2(0,T;V \times \mathbb{H}_0^1 \times V_1 \times V_1)$ $\bar{\mathbb{P}}$-a.s. Furthermore, since a.e. $t\in[0,T]$ $\bar{\bH}(t) \in \mathbb{L}^2 \cap H(\diver,\mathcal{O})$, we infer from \cite[Corollary 5.5]{Simader} that for a.e. $t\in[0,T]$ there exists a unique $\bar{\bH}_a(t) \in H(\diver,\mathcal{O})$ with $\diver\bar{\bH}_a(t)=0$ in $\mathcal{O}$, and $\bar{\bH}_a(t)\cdot\bn=0$ on $\partial\mathcal{O}$ and a unique $\nabla \bar{\varphi}_d(t) \in E(\mathcal{O})\cap E_1(\mathcal{O})$ such that $\bar{\bH}(t)=\bar{\bH}_a(t) + \nabla \bar{\varphi}_d(t)$ and 
\begin{equation*} 
	\begin{cases}
		\Delta \bar{\varphi}_d(t)=\diver\bar{\bH}(t)= -\diver \bar{\bM}(t) \quad &\text{in} \quad \mathcal{O}, \\ 
		\frac{\partial \bar{\varphi}_d(t)}{\partial\bn}= \nabla \bar{\varphi}_d(t) \cdot\bn
		= \bar{\bH}(t) \cdot\bn \quad &\text{on} \quad \partial\mathcal{O}.
	\end{cases}
\end{equation*}
We put $\bar{\bH}_d= \nabla \bar{\varphi}_d$. It is clear that $\curl \bar{\bH}_d= 0$ and $\bar{\bH} \cdot \bn= \bar{\bH}_d \cdot \bn=\frac{\partial \bar{\varphi}_d}{\partial\bn}= -\bar{\bM} \cdot\bn$.  Now, since $\bar{\bH} \in L^\infty(0,T;H_{\bn})$ $\bar{\mathbb{P}}$-a.s., we infer from \cite[Theorem 1.4]{Simader} that
\begin{equation*}
	\bar{\bH}_a \in L^\infty(0,T;H)\ \ \text{and}\ \  \bar{\bH}_d= \nabla \bar{\varphi}_d\in L^\infty(0,T;\mathbb{L}^2) \quad \bar{\mathbb{P}}\text{-a.s.}
\end{equation*}
In addition,  we have $\curl\bar{\bH} \in L^2(0,T;\mathbb{L}^2)$ $\bar{\mathbb{P}}$-a.s. Thus, we obtain $\bar{\bH}_a \in L^2(0,T;V)$ $\bar{\mathbb{P}}$-a.s., because $\curl \bar{\bH}_a= \curl\bar{\bH}$ and $\diver \bar{\bH}_a= 0$ $\bar{\mathbb{P}}$-a.s.
Finally, since $\bar{\bH} \in L^2(0,T;V_1)$, $\bar{\bH} \in L^\infty(0,T;H_{\bn})$ and $\curl\bar{\bH} \in L^2(0,T;\mathbb{L}^2)$ $\bar{\mathbb{P}}$-a.s., we easily see that $\nabla \bar{\varphi}_d= \bar{\bH}_d= \bar{\bH} - \bar{\bH}_a \in L^\infty(0,T;\mathbb{L}^2)\cap L^2(0,T;V_1)$ $\bar{\mathbb{P}}$-a.s.
\end{proof}
The remaining part of the paper is devoted to the proof of Theorem \ref{theo 1}. For this purpose, we will mainly use the Galerkin approximation scheme and some important compactness results that will be carried out in the next section.
\section{Proof of the existence of martingale solution} \label{sect5}
We first report the following orthogonal decomposition of the Hilbert space $V_1$ (see \cite[Lemma 5]{Kamel}).
\begin{lemma}\label{lem4}
The space $V_1$ introduced in Section \ref{sect2} has the following orthogonal decomposition with respect to the scalar product \eqref{Eq2.2}:
\begin{equation}
  V_1 = V_2 \oplus \mathcal{H},
\end{equation}
with $\mathcal{H}= \{\nabla \phi, \phi\in H_0^1(\mathcal{O}) \cap H^2(\mathcal{O})\} = V_2^\bot$.	Furthermore, there exists an orthogonal basis $\{\bar{\psi}_j\}_{j=1}^\infty \cup \{\nabla \bar{\phi}_j\}_{j=1}^\infty$ of $V_1$ such that $\{\bar{\psi}_j\}_{j=1}^\infty$ is an orthogonal basis of $V_2$ and $\{\nabla \bar{\phi}_j\}_{j=1}^\infty$ is an orthogonal basis of $\mathcal{H}$. \newline
Following the argument in the proof of  \cite[Lemma 5]{Kamel} it can be shown that $\bar{\phi}_j\in C^\infty(\mathcal{O}) \cap \mathcal{H}$. Now, \cite[Lemma 5]{Kamel} implies that $V_2$ is a separable Hilbert space into which the space $[C_c^\infty(\bar{\mathcal{O}})]^3$ is dense. Thus, we can choose the basis $\bar{\psi}_j$ so that $\bar{\psi}_j\in [C^\infty(\mathcal{O})]^3 \cap V_2 $.  
\end{lemma}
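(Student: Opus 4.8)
The plan is to construct the decomposition by hand: first verify $\mathcal{H}\subseteq V_1$ and that $\mathcal{H}$ is orthogonal to $V_2$ for the inner product \eqref{Eq2.2}, then reconstruct an arbitrary element of $V_1$ as the sum of a divergence-free field and a gradient by solving a Dirichlet problem, and finally exhibit smooth orthogonal bases of the two summands. \emph{Inclusion and orthogonality.} If $\phi\in H_0^1(\mathcal{O})\cap H^2(\mathcal{O})$, then $\nabla\phi\in\mathbb{H}^1$, $\curl\nabla\phi=0$, and since $\phi$ vanishes on $\Gamma$ its tangential gradient on $\Gamma$ vanishes, so $\nabla\phi\times\bn=0$ on $\Gamma$; hence $\nabla\phi\in H_{\bn}\cap\mathbb{H}^1=V_1$, i.e. $\mathcal{H}\subseteq V_1$. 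For $\bv\in V_2$ the last two terms of $[\bv;\nabla\phi]$ vanish because $\curl\nabla\phi=0$ and $\diver\bv=0$, while $(\bv,\nabla\phi)=-(\diver\bv,\phi)=0$ by integration by parts (no boundary contribution, as $\phi|_\Gamma=0$). Thus $V_2\perp\mathcal{H}$.

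\emph{Reconstruction.} Given $\bu\in V_1\subseteq\mathbb{H}^1$ we have $\diver\bu\in L^2(\mathcal{O})$, so by Lax--Milgram the Dirichlet problem $\Delta\phi=\diver\bu$ in $\mathcal{O}$, $\phi=0$ on $\Gamma$ has a unique weak solution $\phi\in H_0^1(\mathcal{O})$, and, $\mathcal{O}$ being of class $C^\infty$, elliptic regularity gives $\phi\in H^2(\mathcal{O})$, hence $\nabla\phi\in\mathcal{H}$. Setting $\bu_0:=\bu-\nabla\phi$ we get $\bu_0\in\mathbb{H}^1$, $\diver\bu_0=\diver\bu-\Delta\phi=0$ and $\bu_0\times\bn=\bu\times\bn-\nabla\phi\times\bn=0$ on $\Gamma$ (both terms vanishing since $\bu\in H_{\bn}$ and $\phi|_\Gamma=0$), so $\bu_0\in V_1\cap H(\diver^0,\mathcal{O})=V_2$ and $\bu=\bu_0+\nabla\phi$. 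Together with the orthogonality this proves that $V_1=V_2\oplus\mathcal{H}$ is an orthogonal direct sum for \eqref{Eq2.2}, and consequently $\mathcal{H}=V_2^\bot$.

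\emph{Smooth bases.} Let $\{\bar\phi_j\}_{j\ge1}$ be the $L^2$-normalised eigenfunctions of the Dirichlet Laplacian, $-\Delta\bar\phi_j=\lambda_j\bar\phi_j$; by interior elliptic regularity on the smooth domain, $\bar\phi_j\in C^\infty(\mathcal{O})$. A short computation using $\curl\nabla=0$ gives $[\nabla\bar\phi_i;\nabla\bar\phi_j]=(\lambda_i+\lambda_i^2)\delta_{ij}$, and since the $\{\bar\phi_j\}$ form a basis of $H_0^1(\mathcal{O})\cap H^2(\mathcal{O})$, the family $\{\nabla\bar\phi_j\}_{j\ge1}$ has dense span in $\mathcal{H}$ and is therefore an orthogonal basis of $\mathcal{H}$ consisting of gradients of smooth functions. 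For $V_2$, which is a separable Hilbert space, \cite[Lemma 5]{Kamel} ensures that the smooth fields belonging to $V_2$ are dense in $V_2$; applying the Gram--Schmidt procedure for \eqref{Eq2.2} to a countable subset of $[C^\infty(\mathcal{O})]^3\cap V_2$ with dense span produces an orthogonal basis $\{\bar\psi_j\}_{j\ge1}$ of $V_2$ whose elements are finite linear combinations of smooth fields, hence $\bar\psi_j\in[C^\infty(\mathcal{O})]^3\cap V_2$. By the orthogonal splitting, $\{\bar\psi_j\}_{j\ge1}\cup\{\nabla\bar\phi_j\}_{j\ge1}$ is an orthogonal basis of $V_1$ with the stated properties.

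\emph{Main obstacle.} The only nonroutine inputs are the $H^2$-regularity of the Dirichlet Laplacian on a $C^\infty$ domain (which is what pins the gradient part down to exactly $\nabla(H_0^1\cap H^2)$ rather than a larger space) and the density of smooth divergence-free fields with vanishing tangential trace in $V_2$, i.e. \cite[Lemma 5]{Kamel}; the trace bookkeeping that deduces $\nabla\phi\times\bn=0$ on $\Gamma$ from $\phi|_\Gamma=0$ is the other point that needs a little care.
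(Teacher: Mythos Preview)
Your argument is correct and complete. The paper does not supply its own proof of this lemma but simply cites \cite[Lemma~5]{Kamel}; the outline embedded in the lemma statement (Dirichlet-Laplacian eigenfunctions for the $\mathcal{H}$-basis, density of smooth fields in $V_2$ from \cite{Kamel} followed by Gram--Schmidt) is exactly the route you take, so your proposal is a faithful expansion of what the paper defers to the reference.
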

We now proceed to the proof of Theorem \ref{theo 1}. Its proof is divided into many steps. In the first step, we introduce a Galerkin approximation scheme; to do so, we will use a special basis consisting of the eigenfunctions of the Stokes operator $A$ and of the scalar product of $\mathbb{H}_0^1 \times V_1$, and we project \eqref{Eq4.1} onto the linear span of the first $n$ eigenfunctions. Having done so, we then discuss the existence of the Galerkin approximation. In step 2, we derive a priori estimates that are used to show the tightness of the family for the approximating solutions in step 3. Furthermore, in step 3, we construct our new probability space by exploiting 
the Skorokhod embedding theorem, see \cite{Jakubowski_1997}. The last step, i.e. step 4, is devoted to the passage to the limit.
\subsection*{Step 1: Galerkin approximation scheme}
It is well-known that we can find an orthonormal basis  $\{\bar{\upsilon}_j\in [C^\infty(\mathcal{O})]^3: j=1,2,\ldots\}$ of $H$ whose elements are the eigenfunctions of the spectral problem $A\bar{\upsilon}_j= \lambda_j^1 \bar{\upsilon}_j.$ Additionally, it is known that we can find a family of smooth functions $\bar{\Lambda}_j$, $j=1,2,\ldots$, which forms an orthonormal basis of $\mathbb{L}^2$, with the $\bar{\Lambda}_j$ satisfying the spectral problem $A_1\bar{\Lambda}_j= \lambda_j^2 \bar{\Lambda}_j$.  \newline
Let us fix $n\in \mathbb{N}$ and let 
\begin{align*}
\bar{V}_n= \text{span}\{\bar{\upsilon}_1,\ldots,\bar{\upsilon}_n\} \quad \mbox{and} \quad
\bar{H}_{0n}^1= \text{span}\{\bar{\Lambda}_1,\ldots,\bar{\Lambda}_n\}, 
\end{align*} 
the linear spaces spanned by the vectors $\bar{\upsilon}_1,\ldots,\bar{\upsilon}_n$ and $\bar{\Lambda}_1,\ldots,\bar{\Lambda}_n$, respectively. Let $\mathcal{P}_n^1$ and $\mathcal{P}_n^2$ be the orthogonal projections from $H$ onto $\bar{V}_n$ and from $\mathbb{L}^2$ onto $\bar{H}_{0n}^1$, respectively. \newline
Finally,  we consider the family of eigenfunctions $\{\bar{\psi}_j\}_{j=1}^\infty\cup\{\nabla\bar{\phi}_j\}_{j=1}^\infty$ given by Lemma \ref{lem4}, which is assumed to be an orthonormal basis in $V_1$; and for any integer $n\geq 1$, we will consider the following finite-dimensional spaces 
\begin{align*}
V_{1n}
=&\text{span}\{\bar{\psi}_1,\ldots,\bar{\psi}_n\} \cup \text{span} \{\nabla \bar{\phi}_1,\ldots,\nabla \bar{\phi}_n\}, \quad \mathbb{H}_n = \bar{V}_n \times \bar{H}_{0n}^1 \times V_{1n} \times V_{1n},
\end{align*} 
where we endow the space $\mathbb{H}_n$ with the following norm
$$\|(\bu, \bw, \bM, \bH)\|_{\mathbb{H}_n} = (|\bu|^2 + |\bw|^2 + |\bM|^2 + |\bH|^2)^{\frac 12}, \quad \forall (\bu,\bw,\bM,\bH) \in \mathbb{H}_n.$$
We denote by $\mathcal{P}_n^3$ the orthogonal projections on $V_{1n}$ w.r.t the inner product in $\mathbb{L}^2$. \newline
We look for an approximation to a solution of problem \eqref{Eq4.1}-\eqref{eqt4.3} or problem \eqref{Eq4.3} and the initial condition $(\bu_{0,n},\bw_{0,n},\bM_{0,n}, \bH_{0,n})$ in the following form:
\begin{equation}\label{eqt5.2}
\begin{aligned}
	\bu_n(t) &= \sum_{k=1}^{n} a_k^{n}(t)\bar{\upsilon}_k, \quad \bw_n(t) = \sum_{k=1}^{n} b_k^{n}(t) \bar{\Lambda}_k,
	\\
	\bM_n(t) &= \bM_{n,1}(t) + \bM_{n,2}(t) = \sum_{k=1}^{n} c_k^{n}(t) \bar{\psi}_k + \sum_{k=1}^{n} d_k^{n}(t) \nabla \bar{\phi}_k,
	\\
	\bH_n(t) &= \bH_{n,1}(t) + \bH_{n,2}(t) = \sum_{k=1}^{n} e_k^{n}(t) \bar{\psi}_k - \sum_{k=1}^{n} d_k^{n}(t) \nabla \bar{\phi}_k,
\end{aligned}
\end{equation} 
$$ \bu_{0,n} = \mathcal{P}_n^1 \bu_0 := \bu_{0,n},     ~ 
\bw_{0,n} = \mathcal{P}_n^2 \bw_0 := \bw_{0,n},        ~
\bM_{0,n} = \mathcal{P}_n^3 \bM_0 := \bM_{0,n},  ~
\bH_{0,n} = \mathcal{P}_n^3 \bH_0 := \bH_{0,n},
$$
for every $n \in \mathbb{N}$. As $n \to \infty$ we have 
\begin{equation}\label{eq4.1}
\begin{aligned}
	& \bu_{0,n} \to \bu_0 \quad \text{in} \quad \mathbb{L}^2, \quad \bw_{0,n} \to \bw_0 \quad \text{in} \quad \mathbb{L}^2, \quad \bM_{0,n} \to \bM_0 \quad \text{in} \quad \mathbb{L}^2,
	\\
	& \bH_{0,n} \to \bH_0 \quad \text{in} \quad \mathbb{L}^2.
\end{aligned}
\end{equation}
Hence, from \eqref{eq4.1} we see that 
    \begin{equation}\label{eq5.4}
    	|\bu_{0,n}| \leq |\bu_0|, \quad |\bw_{0,n}| \leq |\bw_0|, \quad |\bM_{0,n}| \leq |\bM_0|, \quad |\bH_{0,n}| \leq |\bH_0|.
    \end{equation}
The decompositions of $\bM_n$ and $\bH_n$ were chosen in such a way that $\bB_n$ satisfies 
\begin{equation*}
\diver \bB_n = 0 \quad \text{in} \quad Q,
\end{equation*}
with $\bB_n(t)= \mu_0(\bM_n(t) + \bH_n(t))= \mu_0 \sum \limits_{k=1}^{n}(c_k^{n}(t) + e_k^{n}(t)) \bar{\psi}_k$. \newline
Let $T >0$ be a fixed positive time. We consider the filtered probability space $(\Omega, \mathcal{F}, \mathbb{F} = (\mathcal{F}_t)_{t\in[0,T]}, \mathbb{P})$ and require that the sequence $(\bu_n,\bw_n,\bM_n,\bH_n)_n$ satisfies the following finite dimensional problem: for all $t \in [0,T]$
\begin{subequations}\label{Eq:Galerkin-App}
\begin{align}
	& \bu_n(t) + \int_0^t (\nu A \bu_n + \mathcal{P}_n^1 [B_0(\bu_n,\bu_n)  - \mu_0 M_0(\bM_n,\bH_n) - \mu_0 R_1(\bH_n, \bH_n) + \alpha R_0(\bu_n, \bw_n)]) \,\d s \notag
	\\
	&\quad = \bu_{0,n} + \int_0^t \mathcal{P}_n^1 F_1(\bu_n(s)) \d \beta^1(s), \label{eq4.2a} 
	\\
	& \bw_n(t) + \int_0^t (\lambda_1 A_1 \bw_n + \mathcal{P}_n^2 [B_1(\bu_n,\bw_n) - (\lambda_1 + \lambda_2) R_5(\bw_n) - 2 \alpha R_2(\bu_n,\bw_n) - \mu_0 R_3(\bM_n,\bH_n)]) \d s \notag
	\\
	&\quad = \bw_{0,n} + \int_0^t \mathcal{P}_n^2 F_2(\bw_n(s)) \, \d \beta^2(s), \label{eq4.2b}
	\\
	& \bM_n(t) + \int_0^t \mathcal{P}_n^3 [B_2(\bu_n,\bM_n) - R_3(\bw_n,\bM_n)]\, \d s  + \lambda \int_0^t R_6(\bM_n)\, \d s - \lambda \int_0^tR_5(\bM_n)\, \d s \notag
	\\
	&\quad= \bM_{0,n} - \frac{1}{\tau} \int_0^t (\bM_n - \chi_0 \bH_n)\, \d s +  \int_0^t \mathcal{P}_n^3 G(\bM_n(s)) \, \d \beta^3(s), \label{eq4.2c}
	\\
	& \bB_n(t) + \int_0^t \frac{1}{\sigma} \mathcal{P}_n^3 R_6(\bH_n) - \mathcal{P}_n^3 \tilde{M}_2(\bu_n,\bB_n)\, \d s = \bB_{0,n} + \int_0^t \mathcal{P}_n^3 F_3(\bH_n) \, \d \beta^4(s). \label{eq4.2d}
\end{align}
\end{subequations}
For each $n$, we consider the following mapping $\Upsilon_n: \mathbb{H}_n \to \mathbb{H}_n$ defined by

$\Upsilon_n(\bu, \bw, \bM, \bH) 
=\begin{pmatrix}
\mathcal{P}_n^1 [B_0(\bu,\bu) + \nu A \bu - \mu_0 M_0(\bM,\bH) - \mu_0 R_1(\bH, \bH) + \alpha R_0(\bu, \bw)]
\\
\\
\mathcal{P}_n^2 [B_1(\bu,\bw) - (\lambda_1 + \lambda_2) R_5(\bw) + \lambda_1 A \bw - 2 \alpha R_2(\bu,\bw) - \mu_0 R_3(\bM,\bH)]
\\
\\
\mathcal{P}_n^3 [B_2(\bu,\bM) - R_3(\bw,\bM)] + \frac{1}{\tau} (\bM - \chi_0 \bH) + \lambda [R_6 (\bM) - R_5(\bM)]
\\
\\
\mathcal{P}_n^3 [\frac{1}{\sigma} R_6(\bH) - \tilde{M}_2(\bu,\bB)] 
\end{pmatrix}$.
In the following lemma, we prove that the mapping $\Upsilon_n$ is locally Lipschitz continuous. This is one of the key tools to show that the solutions of the system \eqref{eq4.2a}-\eqref{eq4.2d} exist. 

\begin{lemma}\label{lem3}
We assume Assumption \ref{Hypo2} is satisfied. Then, for each $n \in \mathbb{N}$, the mapping $\Upsilon_n$ is locally Lipschitz continuous; that is, for every $r > 0$, there exists a positive constant $C$ depending on $r$, $\mu_0$, $\sigma$, $\alpha$, $\chi_0$, $n$, and $\lambda$ such that
\begin{equation}\label{eq4.3}
	\|\Upsilon_n(y_1) - \Upsilon_n(y_2)\|_{\mathbb{H}_n} \leq C \|y_1 - y_2\|_{\mathbb{H}_n},
\end{equation}
for any $y_1 = (\bu_1, \bw_1, \bM_1,\bH_1), \, y_2 = (\bu_2, \bw_2, \bM_2,\bH_2) \in \mathbb{H}_n$, with $\|y_1\|_{\mathbb{H}_n} \leq r$ and  $\|y_2\|_{\mathbb{H}_n} \leq r$.
\end{lemma}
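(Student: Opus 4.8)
The plan is to establish \eqref{eq4.3} term by term, exploiting the fact that on the finite-dimensional space $\mathbb{H}_n$ all norms are equivalent, so that in particular the $\mathbb{H}_n$-norm controls the $\mathbb{H}^1$-norm, the $V$-norm, the $V_1$-norm and the $\mathbb{L}^4$-norm of each component, with constants depending on $n$. First I would split $\Upsilon_n(y_1)-\Upsilon_n(y_2)$ into its four block components and, within each block, treat each summand separately, using the triangle inequality and the boundedness of the orthogonal projections $\mathcal{P}_n^i$ on $\mathbb{L}^2$ (hence on $\mathbb{H}_n$). The linear terms $\nu A\bu$, $\lambda_1 A_1\bw$, $\lambda R_6$, $\lambda R_5$, $\frac{1}{\sigma}R_6$, $\frac1\tau(\bM-\chi_0\bH)$, $R_2$ and $R_0$ (which is linear jointly in $(\bu,\bw)$) are handled immediately: each is a bounded linear operator between the relevant spaces by the estimates \eqref{eq2.15}, \eqref{Eq:Ineq-R2}, \eqref{Eq:Ineq-R5}, \eqref{Eq:Ineq-R6}, and after restricting to $\mathbb{H}_n$ and using norm equivalence these become $\mathbb{H}_n\to\mathbb{H}_n$ bounded, so their contribution to the left side of \eqref{eq4.3} is bounded by $C_n\|y_1-y_2\|_{\mathbb{H}_n}$ with $C_n$ depending on $n$, $\nu$, $\lambda_1$, $\lambda$, $\sigma$, $\tau$, $\chi_0$, $\alpha$.

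Next I would deal with the genuinely bilinear (quadratic) terms: $B_0(\bu,\bu)$, $B_1(\bu,\bw)$, $B_2(\bu,\bM)$, $M_0(\bM,\bH)$, $R_1(\bH,\bH)$, $R_3(\bM,\bH)$, $R_3(\bw,\bM)$ and $\tilde{M}_2(\bu,\bB)$ with $\bB=\mu_0(\bM+\bH)$. For each of these I would write the standard bilinear difference identity, e.g. $B_0(\bu_1,\bu_1)-B_0(\bu_2,\bu_2)=B_0(\bu_1-\bu_2,\bu_1)+B_0(\bu_2,\bu_1-\bu_2)$, and then invoke the quantitative bounds already recorded in the excerpt: \eqref{eq2.6a} for the $B_k$, the estimate \eqref{eq2.3} (or \eqref{eq2.4}) for $M_0$, \eqref{Eq:Ineq-R3} for $R_3$, the $\mathbb{L}^4$-bilinear bound in Lemma \ref{Lem:Bilinear-R1} for $R_1$, and \eqref{eq2.9} for $\tilde{M}_2$. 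Each such bound is of the form $\|\,\cdot\,\|\le C\|\text{first argument}\|_{*}\|\text{second argument}\|_{*}$ in norms stronger than $\mathbb{L}^2$; restricting to $\mathbb{H}_n$ and using the hypotheses $\|y_i\|_{\mathbb{H}_n}\le r$ together with norm equivalence converts these into Lipschitz estimates on balls of radius $r$, with Lipschitz constant of the form $C(n)\,r$. Summing the finitely many contributions and collecting constants yields \eqref{eq4.3}.

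The main technical point to watch is the bookkeeping of norms: the bilinear estimates in Section \ref{sect2} are stated with respect to $V$, $V_1$, $\mathbb{L}^4$, etc., whereas \eqref{eq4.3} is an $\mathbb{H}_n$-to-$\mathbb{H}_n$ statement. On the \emph{target} side this is comfortable — the $V'$, $\mathbb{H}^{-1}$, $V_1'$, $V_2'$ norms are all weaker than the $\mathbb{L}^2$ norm on the finite-dimensional spaces, so $\mathbb{L}^2\hookrightarrow (\text{dual space})$ trivially and, conversely, on $\bar V_n$, $\bar H_{0n}^1$, $V_{1n}$ the restrictions of these dual norms are equivalent to $|\cdot|$ with $n$-dependent constants (since these are finitely many fixed smooth basis elements). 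On the \emph{source} side one uses that $|\nabla\bar\upsilon_k|\le \sqrt{\lambda_k^1}\,|\bar\upsilon_k|$ and analogous bounds for $\bar\Lambda_k$, $\bar\psi_k$, $\nabla\bar\phi_k$, so the $V$-, $\mathbb{H}^1$-, $V_1$- and $\mathbb{L}^4$-norms of elements of $\mathbb{H}_n$ are all $\le C(n)\|\cdot\|_{\mathbb{H}_n}$. I do not expect a genuine obstacle here; the only care needed is to confirm that $\tilde M_2(\bu,\bB)$ with $\bB=\mu_0(\bM+\bH)$ is covered — it is, because \eqref{eq2.9} is bilinear in $(\bu,\bB)$ and $\bB$ is a bounded linear function of $(\bM,\bH)$, and because $\bB_n\in V_{1n}$ lies in $V_2$ by the choice of decomposition, so \eqref{eq2.9} applies with the $V_2$-norm of $\bB_n$ bounded by $C(n)\|(\bM_n,\bH_n)\|_{\mathbb{H}_n}$. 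Assembling all pieces, one obtains \eqref{eq4.3} with $C=C(r,\mu_0,\sigma,\alpha,\chi_0,n,\lambda)$, which is the claim; the detailed computation, being routine, I would relegate to an appendix.
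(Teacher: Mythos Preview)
Your proposal is correct and follows essentially the same approach as the paper: both rely on the equivalence of norms on the finite-dimensional space $\mathbb{H}_n$ together with the observation that $\Upsilon_n$ is a sum of continuous linear and bilinear maps, from which the local Lipschitz bound $\|\Upsilon_n(y_1)-\Upsilon_n(y_2)\|_{\mathbb{H}_n}\le C\|y_1-y_2\|_{\mathbb{H}_n}(1+\|y_1\|_{\mathbb{H}_n}+\|y_2\|_{\mathbb{H}_n})$ follows. The paper simply compresses your term-by-term bookkeeping into a single sentence, whereas you spell out which estimate (\eqref{eq2.6a}, \eqref{eq2.3}, \eqref{eq2.9}, \eqref{Eq:Ineq-R3}, etc.) handles which term; both arrive at the same conclusion.
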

\begin{proof}
Let $ n \in \mathbb{N}$ be fixed. Let $y_1 = (\bu_1, \bw_1, \bM_1,\bH_1), ~ y_2 = (\bu_2, \bw_2, \bM_2,\bH_2) \in \mathbb{H}_n$, and $\by = (\bu, \bw, \bM,\bH) \in \mathbb{H}_n$. 
Since all norms are equivalent on finite dimensional space $\mathbb{H}_n$, and the components of $\Upsilon_n$ are the sums of continuous linear and bilinear maps, we easily see that there exists a constant $C>0$ such that 
\begin{equation*}
	\|\Upsilon_n(y_1) - \Upsilon_n(y_2)\|_{\mathbb{H}_n} \leq C \|y_1 - y_2\|_{\mathbb{H}_n} (1+\lVert y_1\rVert_{\mathbb{H}_n}+\lVert y_1\rVert_{\mathbb{H}_n}),
\end{equation*}
from which we easily conclude the proof of the lemma.
\end{proof}
By Lemma \ref{lem3}, the map $Y_n$ is locally Lipschitz. Moreover, by Proposition \ref{propo-1}, $\mathcal{P}_n^1 F_1(\cdot)$, $\mathcal{P}_n^2 F_2(\cdot)$, $\mathcal{P}n^3 G(\cdot)$, and $\mathcal{P}n^3 F_3(\cdot)$ are Lipschitz continuous. Hence, owing to \cite[Theorem 38, p. 303]{Protter}, there exist an adapted process $y_n = (\bu_n,\bw_n,\bM_n,\bH_n)$, a stopping time $\tau_{\max}$, and a sequence of stopping times $(\bar{\tau}^R_n)_{R\in \mathbb{N}}$ such that 
\begin{enumerate}
	\item  $(\bar{\tau}^R_n)_{R\in \mathbb{N}}$ is increasing almost surely and $\bar{\tau}^R \nearrow \tau_{\max}$ almost surely,
	\item  $ (\bu_n (\cdot \wedge \bar{\tau}_n^R) ,\bw_n(\cdot \wedge \bar{\tau}_n^R),\bM_n(\cdot \wedge \bar{\tau}_n^R),\bH_n(\cdot \wedge \bar{\tau}_n^R))\in C([0,T]; \mathbb{H}_n)$,
	\item for all $t\in [0,T]$, with probability 1, we have 
	\begin{equation*}
		y_n(t\wedge \bar{\tau}^R_n )= y_n(0)+\int_0^{t\wedge \bar{\tau}^R_{n}} \Upsilon_n(y_n(s))\,\d s + \int_0^{t\wedge \bar{\tau}^R_n } Z(y_n(s)) \,\d\mathbf{W},
	\end{equation*}
	where $Z(y_n)=\mathrm{diag}(\mathcal{P}^1_nF_1(\bu_n), \mathcal{P}^2_nF_2(\bw_n), \mathcal{P}^3_nG(\bM_n), \mathcal{P}^3_nF_3(\bH_n))$ and 
	$\mathbf{W}= (\beta^1, \beta^2, \beta^3, \beta^4)$. 
\end{enumerate}
The local process  $(y_n; \tau_{\text{max}})$ is called a maximal solution to the system \eqref{Eq:Galerkin-App}, see \cite{ZB+EH+PR} for more detail on local and maximal local solutions of a stochastic evolution equations. \newline
We point out that in the calculations below, we will use the norm which is the norm inherited from the space $\mathbb{H}$. So, we will use the It\^o Lemma for the Lyapunov functional
\[
\mathcal{E}_{tot}: \mathbb{H}_n \ni (\bu,\bw,\bM,\bH)\mapsto (\lvert \bu \rvert^2 + \lvert \bw \rvert^2 + \lvert \bM \rvert^2 + \mu_0 \lvert \bH \rvert^2) \in [0,\infty).
\]
Next, to prove the global existence, we need to apply the Khasminski non-explosion test. For this purpose, for every $n, \, R \in \mathbb{N}$, we set 
\begin{equation*}
\tau_n^R:= \inf\{t>0: (|\bu_n(t)|^2 + |\bw_n(t)|^2 + |\bM_n(t)|^2 + \mu_0 |\bH_n(t)|^2)^\frac{1}{2} \geq R\} \wedge T.
\end{equation*}
In the next subsection, we will derive crucial uniform estimates for the approximating solutions.  
\subsection*{Step 2: A priori estimates for the approximating solutions}
Hereafter, the symbol $C$ will denote a constant that can change from one term to the other and depends on the problem data, but is independent of $n\in \mathbb{N}$. 
 \dela{
\begin{lemma}\label{lem2}
Let the assumptions of Theorem \ref{theo 1} be satisfied. Then, there exists a positive constant $C$ such that for every $n\in \mathbb{N}$ and for all $t\in [0,T]$,
	\begin{equation}\label{Eq5.8}
		\begin{aligned}
			& \mathbb{E} \sup_{s\in[0,t ]} \mathcal{E}_{tot}(\bu_n(s),\bw_n(s),\bM_n(s),\bH_n(s)) + \mathbb{E} \int_0^{t} (\lvert \nabla \bu_n\rvert^2 + \lvert \nabla \bw_n\rvert^2 +    \lvert \diver\bw_n \rvert^2)\, \d s 
			\\
			& + \mathbb{E} \int_0^{t} (\lvert \bM_n \rvert^2 + \lvert \diver\bM_n \rvert^2 + \lvert \curl\bM_n \rvert^2 + \lvert \curl\bH_n \rvert^2 +  \lvert \curl \bu_n - 2\bw_n \rvert^2)\, \d s  
			\\
			& \leq C\mathcal{E}_{tot}(\bu_0,\bw_0,\bM_0,\bH_0). 
		\end{aligned}
	\end{equation}
As a direct consequence of \eqref{Eq5.8} and \eqref{eq2.1}, we have for every $n\in \mathbb{N}$ and for all $t\in [0,T]$,
	\begin{equation}
		\begin{aligned}
			& \mathbb{E} \int_0^t \Vert \bM_n(s) \Vert_{V_1}^2\, \d s\leq  C[1 + \mathcal{E}_{tot}(\bu_0,\bw_0,\bM_0,\bH_0)], \\
			&\mathbb{E} \int_0^t \Vert \bH_n(s) \Vert_{V_1}^2\, \d s \leq  C[ 1 + \mathcal{E}_{tot}(\bu_0,\bw_0,\bM_0,\bH_0)].   
		\end{aligned}
	\end{equation} 
\end{lemma}
}
\begin{lemma}\label{Lem-2}
Let the assumptions of Theorem \ref{theo 1} be satisfied. Then, there exists a positive constant $C$ such that for every $n\in \mathbb{N}$ and for all $t\in [0,T]$,
\begin{equation}\label{eqt5.29-c}
\begin{aligned}
& \mathbb{E}[|\bu_n(t)|^2 + |\bw_n(t)|^2 + |\bM_n(t)|^2 + \mu_0 |\bH_n(t)|^2] 
\\
& + [2\nu - (2-c_1)] \mathbb{E} \int_{0}^{t}  |\nabla \bu_n(s)|^2\, \d s + [2\lambda_1 - (2-c_2)] \mathbb{E} \int_{0}^{t} |\nabla \bw_n(s)|^2 \, \d s
 \\
& + [\lambda(2 - \sigma \mu_0^2 \lambda) - (\mu_0 + 1) (2-c_3) C_0] \mathbb{E} \int_{0}^{t} \lvert \curl \bM_n(s) \rvert^2\, \d s
\\
& + \left[\sigma^{-1} - (2-c_4)C_0 \mu_0^{-1}\right] \mathbb{E} \int_0^{t} \lvert \curl\bH_n(s) \rvert^2\,\d s + 2\tau^{-1} \mathbb{E} \int_0^{t}  |\bM_n(s)|^2\, \d s
   \\
& + \left[2 (\mu_0 + 1) \lambda - (\mu_0 + 1) (2-c_3) C_0 - (2-c_4)C_0 \mu_0^{-1}\right] \mathbb{E} \int_0^{t} |\diver\bM_n(s)|^2\,\d s 
\\
& + 2(\lambda_1 + \lambda_2)\mathbb{E} \int_{0}^{t} \lvert \diver\bw_n(s) \rvert^2\, \d s + 2 \alpha \mathbb{E} \int_{0}^{t} \lvert \curl\bu_n(s) - 2\bw_n(s) \rvert^2\, \d s
\\
&\leq C[1+ \lvert \bu_{0} \rvert^2 + \lvert \bw_{0} \rvert^2 + \lvert \bM_{0} \rvert^2 + \lvert \bH_{0} \rvert^2].
\end{aligned}
\end{equation}
Moreover, we have for every $n\in \mathbb{N}$ and for all $t\in [0,T]$,
	\begin{equation}\label{eqt5.29-d}
		\begin{aligned}
			& \mathbb{E} \int_0^t \Vert \bM_n(s) \Vert_{V_1}^2\, \d s\leq  C[1 + \mathcal{E}_{tot}(\bu_0,\bw_0,\bM_0,\bH_0)], \\
			&\mathbb{E} \int_0^t \Vert \bH_n(s) \Vert_{V_1}^2\, \d s \leq  C[ 1 + \mathcal{E}_{tot}(\bu_0,\bw_0,\bM_0,\bH_0)].   
		\end{aligned}
	\end{equation} 

\end{lemma}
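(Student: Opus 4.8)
The plan is to run an It\^o-energy argument on the Galerkin system \eqref{Eq:Galerkin-App}. Since it is $\bB_n=\mu_0(\bM_n+\bH_n)$, and not $\bH_n$, that solves an equation, I would first rewrite the Lyapunov functional as a smooth quadratic form in the variables that actually satisfy stochastic equations: using $\bH_n=\mu_0^{-1}\bB_n-\bM_n$,
\begin{equation*}
\mathcal{E}_{tot}(\bu_n,\bw_n,\bM_n,\bH_n)=|\bu_n|^2+|\bw_n|^2+(1+\mu_0)|\bM_n|^2-2(\bB_n,\bM_n)+\tfrac{1}{\mu_0}|\bB_n|^2,
\end{equation*}
and apply the It\^o formula to the right-hand side along \eqref{eq4.2a}--\eqref{eq4.2d}, on the stopped interval $[0,t\wedge\tau_n^R]$ so that every drift term and the quadratic variation are genuinely integrable and the stochastic integrals are true martingales. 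Because $F_3$ is driven by $\beta^4$ while $G$ is driven by $\beta^3$, these noises being independent, the cross term $(\bB_n,\bM_n)$ carries no It\^o correction.

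In the resulting identity I would group the terms as follows. The purely convective contributions $\langle B_0(\bu_n,\bu_n),\bu_n\rangle$, $\langle B_1(\bu_n,\bw_n),\bw_n\rangle$ and $(1+\mu_0)\langle B_2(\bu_n,\bM_n),\bM_n\rangle$ vanish by \eqref{eq2.6} and the divergence-free conditions. The dissipative operators produce the leading good terms: $\nu A\bu_n$, $\lambda_1 A_1\bw_n$ give $2\nu|\nabla\bu_n|^2$ and $2\lambda_1|\nabla\bw_n|^2$; $R_5$ in the $\bw_n$-equation gives $2(\lambda_1+\lambda_2)|\diver\bw_n|^2$; while $\lambda R_6(\bM_n)-\lambda R_5(\bM_n)$, $\sigma^{-1}R_6(\bH_n)$ and the cross term $-2(\bB_n,\bM_n)$ together produce $|\curl\bM_n|^2$, $|\diver\bM_n|^2$, $|\curl\bH_n|^2$ plus a residual $\int\curl\bM_n\cdot\curl\bH_n$ that I would split by Young's inequality with weight $\sigma\mu_0^2\lambda$ --- this is exactly the origin of the factor $\sigma\mu_0^2\lambda$ inside the coefficient of $|\curl\bM_n|^2$ in \eqref{eqt5.29-c}. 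The crucial cancellation is that the magnetic nonlinearities --- the Kelvin force $\mu_0 M_0(\bM_n,\bH_n)$, the Lorentz-type force $\mu_0 R_1(\bH_n,\bH_n)$, and the Maxwell transport term $\tilde{M}_2(\bu_n,\bB_n)$ (the latter appearing both in $d|\bB_n|^2$ and, through the $-2(\bB_n,\bM_n)$ cross term, paired with $\bM_n$ and with the advective term $B_2$) --- jointly contribute nothing to $d\mathcal{E}_{tot}$, as one checks by combining \eqref{eq2.2} (or \eqref{eqt2.2}), the identity $M_1(\bM,\bH,\bH)=0$, \eqref{eq2.9B} and $\diver\bB_n=0$. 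Likewise, $\alpha R_0(\bu_n,\bw_n)$ and $-2\alpha R_2(\bu_n,\bw_n)$ combine into the sign-definite $2\alpha|\curl\bu_n-2\bw_n|^2$; the magnetic torque $\mu_0 R_3(\bM_n,\bH_n)$ in the $\bw_n$-equation cancels against the corresponding term coming from differentiating $-2(\bB_n,\bM_n)$, and the precession term $R_3(\bw_n,\bM_n)$ tested with $\bM_n$ vanishes; finally the relaxation term produces $2\tau^{-1}|\bM_n|^2$ together with a symmetric remainder bounded via Young's inequality by a multiple of $|\bM_n|^2+|\bH_n|^2$.

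It remains to absorb the It\^o corrections. By Proposition \ref{propo-1} they are dominated by $(2-c_1)|\nabla\bu_n|^2$, $(2-c_2)|\nabla\bw_n|^2$, $(1+\mu_0)(2-c_3)|\nabla\bM_n|^2$ and $\mu_0^{-1}(2-c_4)|\nabla\bH_n|^2$; using the equivalence \eqref{eq2.1} and $\diver\bH_n=-\diver\bM_n$ (from $\diver\bB_n=0$) the last two become $(1+\mu_0)(2-c_3)C_0(|\bM_n|^2+|\curl\bM_n|^2+|\diver\bM_n|^2)$ and $\mu_0^{-1}(2-c_4)C_0(|\bH_n|^2+|\curl\bH_n|^2+|\diver\bM_n|^2)$. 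Collecting everything and moving the curl/divergence pieces into the corresponding dissipative terms yields the differential inequality behind \eqref{eqt5.29-c}: the sole purpose of the conditions \eqref{eq4.12} on $c_1,\ldots,c_4$ and of the window \eqref{eqt4.14} for $\lambda$ is that each coefficient on the left-hand side of \eqref{eqt5.29-c} is then nonnegative. Taking expectations kills the martingale parts; Gronwall's lemma bounds $\mathbb{E}\,\mathcal{E}_{tot}(\bu_n(t\wedge\tau_n^R),\ldots)$, and hence all the accompanying integrals, by $C[1+\mathcal{E}_{tot}(\bu_0,\bw_0,\bM_0,\bH_0)]$ uniformly in $R$ and $n$; and the Khasminskii test --- the bound forces $\mathbb{P}(\tau_n^R<T)\le C R^{-2}\to 0$, so $\tau_n^R\nearrow\tau_{\max}\ge T$ --- combined with Fatou's lemma delivers \eqref{eqt5.29-c}. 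Estimate \eqref{eqt5.29-d} is then immediate from \eqref{eq2.1}, which controls $\|\bM_n\|_{V_1}^2$ by $|\bM_n|^2+|\curl\bM_n|^2+|\diver\bM_n|^2$, each of which is dominated by the left side of \eqref{eqt5.29-c}, and similarly for $\bH_n$ once one uses $\diver\bH_n=-\diver\bM_n$ and the uniform bound on $\mathbb{E}|\bH_n(t)|^2$.

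The main obstacle is the bookkeeping of the second paragraph: arranging the many coupling terms so that they either cancel or become sign-definite --- in particular the Kelvin/Lorentz/Maxwell-transport cancellation and the torque/cross-term cancellation, both of which hinge on $\diver\bB_n=0$ and on the precise structure of the operators in Lemmas \ref{lem1}--\ref{lem1B} --- and then checking that the algebraic constraints \eqref{eq4.12}--\eqref{eqt4.14}, after the Young splittings above, indeed leave every coefficient on the left of \eqref{eqt5.29-c} nonnegative.
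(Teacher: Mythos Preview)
Your proposal is correct and follows essentially the same route as the paper: an It\^o energy identity on the stopped interval, the same nonlinear cancellations (Kelvin/Lorentz against Maxwell transport via \eqref{eqt2.2} and \eqref{eq2.9B}; torque against the cross term; $R_0$ against $R_2$), the Young splitting of $2\mu_0\lambda(\curl\bM_n,\curl\bH_n)$ with weight $\sigma\mu_0^2\lambda$, the bound on the It\^o corrections through \eqref{eq3.9} and \eqref{eq2.1} using $\diver\bH_n=-\diver\bM_n$, then expectation, Gronwall, and Fatou after $\tau_n^R\nearrow\infty$. The only cosmetic difference is that the paper applies It\^o to $|\bu_n|^2$, $|\bw_n|^2$, $|\bM_n|^2$, $|\bB_n|^2$ separately and reconstructs $\mu_0|\bH_n|^2$ via \eqref{eqt5.23}, whereas you expand $\mathcal{E}_{tot}$ as a quadratic form in $(\bu_n,\bw_n,\bM_n,\bB_n)$ and apply It\^o once; the resulting identity is the same as \eqref{eqt5.29}.
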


\begin{proof}
Let $n, R\in \mathbb{N}$, $t\in [0,T]$, and $\mathcal{D}_{t}= (0,t \wedge \tau_n^R)  \times \mathcal{O}$. By applying the It\^o formula to the functional  $\Phi(x) = |x|^2$  and  \eqref{eq4.2a}, we find
	\begin{equation*}
		\begin{aligned}
			&|\bu_n(t\wedge \tau_n^R)|^2 + 2\nu \int_{0}^{t\wedge \tau_n^R} |\nabla \bu_n(s)|^2 \, \d s - 2 \mu_0 \int_{0}^{t\wedge \tau_n^R} M_1(\bM_n(s),\bH_n(s),\bu_n(s)) \, \d s \\
			&\quad - 2\mu_0 \int_{0}^{t\wedge \tau_n^R} \langle R_1(\bH_n(s), \bH_n(s)),\bu_n(s) \rangle \, \d s + 2\alpha \int_{0}^{t\wedge \tau_n^R} \langle R_0(\bu_n(s), \bw_n(s)),\bu_n(s) \rangle \,\d s \\
			&\quad= |\bu_{0,n}|^2 + \int_{0}^{t\wedge \tau_n^R} \|F_1(\bu_n(s))\|_{L_2(U_1,H)}^2 \, \d s + 2\int_{0}^{t\wedge \tau_n^R}  \left( \bu_n(s), F_1(\bu_n(s)) \, \d \beta^1(s)\right),
		\end{aligned}
	\end{equation*}
where $\langle \cdot, \cdot\rangle$ denotes the dual pairing between $V'$ and $V$. Here we used the fact that $\langle B_0(\bu_n,\bu_n),\bu_n \rangle= 0$, due to \eqref{eq2.6}. Next, thanks to \eqref{eqt2.2}, we have
	\begin{equation}\label{eqt5.8}
		\begin{aligned}
			& -2\mu_0 M_1(\bM_n,\bH_n,\bu_n) - 2\mu_0 \langle R_1(\bH_n, \bH_n),\bu_n \rangle 
			\\
			& \quad = -2\int_{\mathcal{O}}(\curl\bH_n \times \bB_n) \cdot \bu_n \, \d x + 2\mu_0 \int_{\mathcal{O}}(\bu_n \cdot \nabla) \bM_n \cdot \bH_n \, \d x.
		\end{aligned}
	\end{equation}		
Thus, 
	\begin{equation}\label{eq5.6}
		\begin{aligned}
			&|\bu_n(t\wedge \tau_n^R)|^2 + 2 \nu \int_{0}^{t\wedge \tau_n^R} | \nabla \bu_n(s)|^2 \, \d s - 2 \int_{\mathcal{D}_{t}} (\curl\bH_n(s) \times \bB_n(s)) \cdot \bu_n(s) \, \d x \, \d s \\
			& \quad + 2\mu_0 \int_{\mathcal{D}_{t}} (\bu_n(s) \cdot \nabla) \bM_n(s) \cdot \bH_n(s) \, \d x \, \d s + 2\alpha \int_{0}^{t\wedge \tau_n^R} \langle R_0(\bu_n(s), \bw_n(s)),\bu_n(s) \rangle \, \d s \\
			&\quad= |\bu_{0,n}|^2 + \int_{0}^{t\wedge \tau_n^R} \|F_1(\bu_n(s))\|_{L_2(U_1,H)}^2 \, \d s + 2 \int_{0}^{t\wedge \tau_n^R}  \left( \bu_n(s), F_1(\bu_n(s)) \, \d \beta^1(s)\right).
		\end{aligned}
	\end{equation}	
Applying the It\^o formula to the functional $\Phi(x)= |x|^2$ and \eqref{eq4.2b}, using the fact that $\langle B_1(u_n,\bw_n), \bw_n \rangle = 0$ and
$4 \alpha \langle  R_3(\bu_n,\bw_n),\bw_n \rangle
= -2\alpha |\curl\bu_n - 2\bw_n|^2 + 2 \alpha \langle R_0(\bu_n, \bw_n), \bu_n\rangle$,	
we obtain
	\begin{align}\label{eq5.8}
		& |\bw_n(t\wedge \tau_n^R)|^2 + 2 \int_{0}^{t\wedge \tau_n^R} [\lambda_1 |\nabla \bw_n|^2 + (\lambda_1 + \lambda_2) |\diver\bw_n|^2 + \alpha |\curl \bu_n - 2\bw_n|^2] \, \d s \notag
		\\
		&\quad= |\bw_{0,n}|^2 + 2 \int_{0}^{t\wedge \tau_n^R} (\mu_0 \langle R_3(\bM_n,\bH_n),\bw_n \rangle_{\mathbb{H}^{-1}, \mathbb{H}_0^1} +  \alpha \langle R_0(\bu_n, \bw_n), \bu_n \rangle_{V',V}) \,\d s 
		\\
		&\qquad  + \int_{0}^{t\wedge \tau_n^R} \|F_2(\bw_n)\|_{L_2(U_1,\mathbb{L}^2)}^2  \, \d s
		+ 2 \int_{0}^{t\wedge \tau_n^R}  \left(\bw_n(s), \ F_2(\bw_n(s)) \, \d \beta^2(s)\right). \notag
	\end{align}		
Applying the It\^o formula to the functional $\Phi(x) = |x|^2$ and \eqref{eq4.2c}, using the fact that $$ \langle B_2(u_n,\bM_n),\bM_n \rangle_{V'_1, V_1} =0 \quad \text{and} \quad \langle R_5(\bw_n,\bM_n),\bM_n \rangle_{V'_1, V_1} =0,$$ we find
		\begin{equation*}
		\begin{aligned}
			& |\bM_n(t\wedge \tau_n^R)|^2 - |\bM_{0,n}|^2 + 2\tau^{-1} \int_{0}^{t\wedge \tau_n^R} |\bM_n|^2 \, \d s + 2\lambda \int_{\mathcal{D}_{t}} (\curl \curl \bM_n - \nabla \diver\bM_n) \cdot \bM_n \, \d x \, \d s  
			\\
			&\quad= 2\chi_0 \tau^{-1} \int_{\mathcal{D}_{t}} \bM_n \cdot \bH_n \, \d x \, \d s + \int_{0}^{t\wedge \tau_n^R} \|G(\bM_n)\|_{L_2(U_1,\mathbb{L}^2)}^2 \, \d s
			+ 2 \int_{0}^{t\wedge \tau_n^R} \left(\bM_n, G(\bM_n) \, \d \beta^3(s)\right).
		\end{aligned}
	\end{equation*}	
Besides, using an integration by parts along with the boundary conditions \eqref{eq4.2}, we have
	\begin{equation*}
		2\lambda \int_{\mathcal{O}} (\curl \curl \bM_n - \nabla \diver\bM_n)\cdot \bM_n \,\d x  
		= 2\lambda (|\curl\bM_n|^2 + |\diver\bM_n|^2).
	\end{equation*}
Hence, 
	\begin{equation}\label{eq4.10}
		\begin{aligned}
			& |\bM_n(t\wedge \tau_n^R)|^2 + 2\tau^{-1} \int_{0}^{t\wedge \tau_n^R}  |\bM_n(s)|^2 \, \d s + 2\lambda \int_{0}^{t\wedge \tau_n^R} (|\curl\bM_n(s)|^2 + |\diver\bM_n(s)|^2) \, \d s 
			\\
			&\quad= |\bM_{0,n}|^2 + 2\chi_0 \tau^{-1} \int_{\mathcal{D}_{t}} \bM_n(s) \cdot \bH_n(s) \, \d x \, \d s + \int_{0}^{t\wedge \tau_n^R}  \|G(\bM_n(s))\|_{L_2(U_1,\mathbb{L}^2)}^2 \, \d s,
		\end{aligned}
	\end{equation} 
where we have also used the fact that
	\begin{equation*}
		\int_{0}^{t\wedge \tau_n^R} \left(\bM_n(s), G(\bM_n(s))\, \d \beta^3(s) \right)= \int_{0}^{t\wedge \tau_n^R} \int_{\mathcal{O}} \sum_{k=1}^{\infty} \sum_{j=1}^{3} \sigma_k^{(i)}(x) \frac{\partial}{\partial x_i}\left( \frac 1 2 |(\bM_n(s))_j|^2 \right) \, \d x \, \d\beta_k^3(s)= 0.
	\end{equation*} 
Note that this is true since $\diver\sigma_k =0$ in $\mathcal{O}$ and $\sigma_k|_{\Gamma}= 0$ on $\Gamma=\partial \mathcal{O}$. Here $(\bM_n(s))_j$ denotes the $j$-entry of the vector fields $\bM_n(s)$.  \newline
Now, observe that 
\begin{equation}\label{eq5.22}
\begin{aligned}
\d |\bM_n|^2 &=2(\bM_n,\d \bM_n) + \d [\bM_n,\bM_n]=2(\bM_n,\d \bM_n) + \|G(\bM_n)\|_{L_2(U_1,\mathbb{L}^2)}^2 \d t, 
\\
\d |\bH_n|^2 &= 2(\bH_n,\d \bH_n) + \d [\bH_n,\bH_n]=2(\bH_n,\d \bH_n) + \|G(\bM_n)\|_{L_2(U_1,\mathbb{L}^2)}^2 \d t \\
&\quad \hspace{6.7cm} + \mu_0^{-2} \|F_3(\bH_n)\|_{L_2(U_2,\mathbb{L}^2)}^2 \d t, 
   \\
\d |\bB_n|^2 &=2(\bB_n,\d \bB_n) + \d [\bB_n,\bB_n]=2(\bB_n,\d \bB_n) + \|F_3(\bH_n)\|_{L_2(U_2,\mathbb{L}^2)}^2 \d t,
\end{aligned}
\end{equation} 
where $[X,X]$ is the quadratic variation of the process $X$. Thus, 
	\begin{align*}
		&(\bM_n,\d \bH_n) + (\bH_n,\d \bM_n)
		= \mu_0^{-2} (\bB_n,\d \bB_n) - (\bH_n,\d \bH_n) - (\bM_n,\d \bM_n) \notag \\
		&= \frac{1}{2\mu_0^2} \d |\bB_n|^2 - \frac{1}{2\mu_0^2} \|F_3(\bH_n)\|_{L_2(U_2,\mathbb{L}^2)}^2 \d t - \frac 1 2 \d |\bH_n|^2 + \frac{1}{2\mu_0^2} \|F_3(\bH_n)\|_{L_2(U_2,\mathbb{L}^2)}^2 \d t \notag \\
		&\qquad + \frac 1 2 \|G(\bM_n)\|_{L_2(U_1,\mathbb{L}^2)}^2 \d t - \frac 1 2 \d |\bM_n|^2 + \frac 1 2 \|G(\bM_n)\|_{L_2(U_2,\mathbb{L}^2)}^2 \d t \\
		&= \frac{1}{2\mu_0^2} \d |\bB_n|^2 - \frac 1 2 \d |\bH_n|^2 - \frac 1 2 \d |\bM_n|^2 + \|G(\bM_n)\|_{L_2(U_1,\mathbb{L}^2)}^2 \d t, \notag
	\end{align*}
and in turn, we find that
 \begin{equation}\label{eqt5.23}
  \d |\bH_n|^2 
  = \mu_0^{-2} \d |\bB_n|^2 - \d |\bM_n|^2 + 2 \|G(\bM_n)\|_{L_2(U_1,\mathbb{L}^2)}^2 \d t - 2 (\bM_n,\d \bH_n) - 2 (\bH_n,\d \bM_n).
  \end{equation}
Integrating \eqref{eq5.22}$_3$ in time over $[0,t\wedge \tau_n^R]$ and using \eqref{eq4.2d}, we deduce that
		\begin{align*}
			|\bB_n(t\wedge \tau_n^R)|^2  
			&= |\bB_{0,n}|^2 - 2 \sigma^{-1} \int_{0}^{t\wedge \tau_n^R} \langle R_6(\bH_n),\bB_n\rangle_{V'_2,V_2} \d s + 2\int_{0}^{t\wedge \tau_n^R} \langle \tilde{M}_2(\bu_n,\bB_n),\bB_n\rangle_{V'_2, V_2} \d s \\
			&\quad + 2 \int_{0}^{t\wedge \tau_n^R} \left(\bB_n(s), F_3(\bH_n(s)) \, \d \beta^4(s)\right) + \int_{0}^{t\wedge \tau_n^R} \|F_3(\bH_n(s))\|_{L_2(U_2,\mathbb{L}^2)}^2 \d s.
	\end{align*}  
Besides, we observe that
\begin{align*}
\langle R_6(\bH_n),\bB_n\rangle_{V'_2,V_2}
&= (\curl \bH_n,\curl\bB_n) = \mu_0 |\curl\bH_n|^2  + \mu_0 (\curl\bH_n, \curl\bM_n), \\
\langle \tilde{M}_2(\bu_n,\bB_n),\bB_n\rangle_{V'_2, V_2}  
&=  (\curl (\bu_n \times \bB_n), \bB_n)= \mu_0 (\curl (\bu_n \times \bB_n), \bM_n) -  \mu_0 (\curl\bH_n \times \bB_n, \bu_n).       
   \end{align*}
Consequently, 
	\begin{equation}\label{eqt5.24}
		\begin{aligned}
			&|\bB_n(t\wedge \tau_n^R)|^2 - |\bB_{0,n}|^2 + 2\mu_0 \sigma^{-1} \int_{0}^{t\wedge \tau_n^R} |\curl \bH_n|^2 \, \d s -  \int_{0}^{t\wedge \tau_n^R} \|F_3(\bH_n)\|_{L_2(U_2,\mathbb{L}^2)}^2 \d s \\
			&=  -2\mu_0 \sigma^{-1} \int_{\mathcal{D}_{t}} \curl \bH_n \cdot \curl \bM_n \, \d x \, \d s  + 2 \mu_0 \int_{\mathcal{D}_{t}} \curl (\bu_n \times \bB_n) \cdot \bM_n \, \d x \, \d s  \\
			&\quad - 2 \mu_0 \int_{\mathcal{D}_{t}} (\curl \bH_n \times \bB_n) \cdot \bu_n \, \d x \, \d s + 2 \int_{0}^{t\wedge \tau_n^R}\left(\bB_n, F_3(\bH_n) \, \d \beta^4(s)\right) .
		\end{aligned}
	\end{equation}
Integrating \eqref{eqt5.23} in time over $[0,t\wedge \tau_n^R]$ with $t \in[0,T]$, we infer that
	\begin{align}\label{eqt5.25}
		\mu_0 |\bH_n(t\wedge \tau_n^R)|^2 - \mu_0 |\bH_{0,n}|^2 
		&=  \mu_0^{-1} [|\bB_n(t\wedge \tau_n^R)|^2 - |\bB_{0,n}|^2] - \mu_0 [|\bM_n(t\wedge \tau_n^R)|^2 - |\bM_{0,n}|^2] \notag \\
		&\quad + 2\mu_0 \int_{0}^{t\wedge \tau_n^R} \|G(\bM_n)\|_{L_2(U_1,\mathbb{L}^2)}^2 \d s  \\
		&\quad - 2\mu_0 \int_{\mathcal{D}_{t}} \bM_n \cdot \partial_t \bH_n \, \d x \, \d s - 2\mu_0 \int_{\mathcal{D}_{t}} \bH_n \cdot \partial_t \bM_n \, \d x \, \d s. \notag
	\end{align}
On the other hand, by integration by parts, \eqref{eq4.2} and the fact $\diver\bB_n=0$ in $\mathcal{O}$, we have
	\begin{equation*}
		\begin{aligned}
			- 2\mu_0 \lambda \int_{\mathcal{O}} [\nabla \diver \bM_n - \curl\curl\bM_n] \cdot \bH_n \,\d x 
			&= 2\mu_0 \lambda (\curl\bM_n, \curl \bH_n) - 2\mu_0  \lambda |\diver\bM_n|^2, \\
			(\nabla \diver\bM_n, \bM_n)
			&= - |\diver\bM_n|^2,
		\end{aligned}
	\end{equation*}
and in turn, exploiting \eqref{eq4.2c} and \eqref{eq4.2d}, we have 
	\begin{align}\label{eqt5.26}
		- 2\mu_0 \int_{\mathcal{D}_{t}} \bM_n \cdot \partial_t \bH_n \, \d x \, \d s 
		&= -2 \mu_0  \int_{0}^{t\wedge \tau_n^R} [\lambda (|\curl\bM_n|^2 + |\diver\bM_n|^2) + \tau^{-1} |\bM_n|^2]  \, \d s  \notag  \\
		&\quad +  2\int_{\mathcal{D}_{t}} (\sigma^{-1} \curl\curl \bH_n - \curl (\bu_n \times \bB_n)) \cdot \bM_n \, \d x \, \d s \\
		&\quad + 2\mu_0 \chi_0 \tau^{-1} \int_{\mathcal{D}_{t}} \bH_n \cdot \bM_n \, \d x \, \d s -2 \int_{0}^{t\wedge \tau_n^R}\left(\bM_n, F_3(\bH_n) \d \beta^4(s)\right),  \notag
	\end{align}
\begin{align}\label{eqt5.27}
- 2\mu_0 \int_{\mathcal{D}_{t}} \bH_n \cdot \partial_t \bM_n \, \d x \, \d s 
&= 2\mu_0 \int_{\mathcal{D}_{t}} [(\bu_n \cdot \nabla) \bM_n \cdot \bH_n + \lambda \curl \bM_n \cdot \curl\bH_n]  \, \d x \, \d s  \notag 
\\
&\quad - 2 \mu_0 \int_{0}^{t\wedge \tau_n^R}  (\lambda |\diver\bM_n|^2 + \chi_0 \tau^{-1} |\bH_n|^2) \, \d s 
   \\ 
&\quad -2\mu_0 \int_{\mathcal{D}_{t}} [(\bw_n \times \bM_n) \cdot \bH_n - \tau^{-1} \bM_n \cdot \bH_n] \, \d x \, \d s \notag 
      \\
&\quad    - 2\mu_0 \int_{0}^{t\wedge \tau_n^R}\left(\bH_n, G(\bM_n) \d \beta^3(s)\right). \notag 
\end{align}
Owing to \eqref{eq4.10}, \eqref{eqt5.24}-\eqref{eqt5.27}, and the fact that $(\bw_n \times \bM_n) \cdot \bH_n = (\bM_n\times \bH_n) \cdot \bw_n$, we obtain 
	\begin{equation}\label{eqt5.28}
		\begin{split}
			& \mu_0 |\bH_n(t\wedge \tau_n^R)|^2  + \int_{0}^{t\wedge \tau_n^R} (\sigma^{-1} |\curl\bH_n|^2 + 2\mu_0 \lambda |\diver\bM_n|^2 + 2\mu_0 \chi_0 \tau^{-1} |\bH_n|^2) \d s 
			\\
			&= \mu_0 |\bH_{0,n}|^2 - 2 \int_{\mathcal{D}_{t}} (\curl\bH_n \times \bB_n) \cdot \bu_n \, \d x \, \d s + 2 \mu_0 \int_{\mathcal{D}_{t}} (\bu_n \cdot \nabla) \bM_n \cdot \bH_n \, \d x \, \d s
			\\
			&\quad  +  \int_{\mathcal{D}_{t}} (2\mu_0 \tau^{-1} \bH_n \cdot \bM_n   - 2\mu_0 (\bM_n \times \bH_n) \cdot \bw_n + 2\mu_0 \lambda \curl\bM_n \cdot\curl \bH_n) \d x \, \d s 
			\\
			&\quad + \mu_0 \int_{0}^{t\wedge \tau_n^R} \|G(\bM_n)\|_{L_2(U_1,\mathbb{L}^2)}^2 \d s + \mu_0^{-1} \int_{0}^{t\wedge \tau_n^R} \|F_3(\bH_n)\|_{L_2(U_2,\mathbb{L}^2)}^2 \d s 
			\\
			&\quad - 2\mu_0 \int_{0}^{t\wedge \tau_n^R}\left(\bH_n, G(\bM_n) \d \beta^3(s)\right) + 2\int_{0}^{t\wedge \tau_n^R}\left(\bH_n, F_3(\bH_n) \, \d \beta^4(s)\right).
		\end{split}
	\end{equation}
Adding \eqref{eq5.6}, \eqref{eq5.8}, \eqref{eq4.10} and \eqref{eqt5.28} side by side, we obtain for all $t \in[0,T]$,
	\begin{equation}\label{eqt5.29}
		\begin{aligned}
			& |\bu_n(t\wedge \tau_n^R)|^2 + |\bw_n(t\wedge \tau_n^R)|^2 + |\bM_n(t\wedge \tau_n^R)|^2 + \mu_0 |\bH_n(t\wedge \tau_n^R)|^2 
			\\
			&\quad + 2 \int_{0}^{t\wedge \tau_n^R} (\nu |\nabla \bu_n(s)|^2 + \lambda_1 |\nabla \bw_n(s)|^2 + \lambda |\curl \bM_n(s)|^2 + \lambda |\diver\bM_n(s)|^2) \, \d s \\
			&\quad + 2 \int_0^{t\wedge \tau_n^R}  \left(\mu_0 \lambda |\diver\bM_n(s)|^2 +  \frac{1}{\sigma} |\curl\bH_n(s)|^2 + \tau^{-1} |\bM_n(s)|^2 + \frac{\mu_0\chi_0}{\tau} |\bH_n(s)|^2\right) \d s 
			\\
			&\quad  + 2(\lambda_1 + \lambda_2)\int_{0}^{t\wedge \tau_n^R} |\diver\bw_n(s)|^2 \, \d s + 2 \alpha \int_{0}^{t\wedge \tau_n^R} |\curl\bu_n(s) - 2\bw_n(s)|^2 \, \d s
			\\
			&= |\bu_{0,n}|^2 + |\bw_{0,n}|^2 + |\bM_{0,n}|^2 + \mu_0 |\bH_{0,n}|^2 + \frac{2(\chi_0+\mu_0)}{\tau} \int_{\mathcal{D}_{t}} \bM_n(s) \cdot \bH_n(s) \, \d x \, \d s 
			\\
			&\quad + 2\mu_0 \lambda \int_{\mathcal{D}_{t}} \curl\bM_n(s) \cdot \curl\bH_n(s) \, \d x \d s + (\mu_0 + 1) \int_{0}^{t\wedge \tau_n^R}  \|G(\bM_n(s))\|_{L_2(U_1,\mathbb{L}^2)}^2 \, \d s
			\\
			&\quad  + \int_{0}^{t\wedge \tau_n^R} \|F_1(\bu_n(s))\|_{L_2(U_1,H)}^2 + \|F_2(\bw_n(s))\|_{L_2(U_1,\mathbb{L}^2)}^2  + \frac{1}{\mu_0} \|F_3(\bH_n(s))\|_{L_2(U_2,\mathbb{L}^2)}^2 \, \d s 
			\\
			&\quad + 2 \int_{0}^{t\wedge \tau_n^R} \left(\bu_n(s), F_1(\bu_n(s)) \, \d \beta^1(s)\right) + 2 \int_{0}^{t\wedge \tau_n^R}\left(\bw_n(s), F_2(\bw_n(s)) \, \d \beta^2(s)\right) 
			\\
			&\quad - 2\mu_0\int_0^{t\wedge \tau_n^R} \left(\bH_n(s),G(\bM_n(s)) \, \d \beta^3(s)\right) + 2 \int_0^{t\wedge \tau_n^R} \left( \bH_n(s) , F_3(\bH_n(s)) \, \d \beta^4(s)\right).
		\end{aligned}
	\end{equation}
Observe that
		\begin{equation}\label{eq5.28}
			\begin{aligned}
				2\tau^{-1} (\mu_0 + \chi_0) (\bM_n,\bH_n) 
				& \leq  \lvert \bM_n \rvert^2 + \tau^{-1}(\mu_0 + \chi_0)^2 \lvert \bH_n \rvert^2), \\
				2\mu_0 \lambda (\curl \bM_n, \curl \bH_n)  
				&\leq  (\sigma \mu_0^2 \lambda^2 \lvert \curl \bM_n \rvert^2 + \sigma^{-1} \lvert \curl \bH_n \rvert^2).
			\end{aligned}
		\end{equation} 
Hence, taking the expectation in \eqref{eqt5.29}, using \eqref{eq2.1}, \eqref{eq3.9}, and \eqref{eq5.28}, we deduce that for every $n,\, R \in \mathbb{N}$,
\begin{equation}\label{eqt5.29-a}
\begin{aligned}
& \mathbb{E}[|\bu_n(t\wedge \tau_n^R)|^2 + |\bw_n(t\wedge \tau_n^R)|^2 + |\bM_n(t\wedge \tau_n^R)|^2 + \mu_0 |\bH_n(t\wedge \tau_n^R)|^2] 
\\
& + [2\nu - (2-c_1)] \mathbb{E} \int_{0}^{t\wedge \tau_n^R}  |\nabla \bu_n(s)|^2\, \d s + [2\lambda_1 - (2-c_2)] \mathbb{E} \int_{0}^{t\wedge \tau_n^R} |\nabla \bw_n(s)|^2 \, \d s
 \\
& + [\lambda(2 - \sigma \mu_0^2 \lambda) - (\mu_0 + 1) (2-c_3) C_0] \mathbb{E} \int_{0}^{t\wedge \tau_n^R} \lvert \curl \bM_n(s) \rvert^2\, \d s
\\
& + \left[\sigma^{-1} - (2-c_4)C_0 \mu_0^{-1}\right] \mathbb{E} \int_0^{t\wedge \tau_n^R} \lvert \curl\bH_n(s) \rvert^2\,\d s + 2\tau^{-1} \mathbb{E} \int_0^{t\wedge \tau_n^R}  |\bM_n(s)|^2\, \d s
   \\
& + \left[2 (\mu_0 + 1) \lambda - (\mu_0 + 1) (2-c_3) C_0 - (2-c_4)C_0 \mu_0^{-1}\right] \mathbb{E} \int_0^{t\wedge \tau_n^R} |\diver\bM_n(s)|^2\,\d s 
\\
& + 2(\lambda_1 + \lambda_2)\mathbb{E} \int_{0}^{t\wedge \tau_n^R} \lvert \diver\bw_n(s) \rvert^2\, \d s + 2 \alpha \mathbb{E} \int_{0}^{t\wedge \tau_n^R} \lvert \curl\bu_n(s) - 2\bw_n(s) \rvert^2\, \d s
\\
&\leq |\bu_{0}|^2 + |\bw_{0}|^2 + |\bM_{0}|^2 + \mu_0 |\bH_{0}|^2 + [1 + (\mu_0 + 1) (2-c_3) C_0] \mathbb{E} \int_{0}^{t\wedge \tau_n^R} \lvert \bM_n \rvert^2\,\d s 
    \\
&\quad + [(2-c_4)C_0 \mu_0^{-1} + \tau^{-1}(\mu_0^2 + \chi_0^2)] \mathbb{E} \int_{0}^{t\wedge \tau_n^R} \lvert \bH_n \rvert^2\, \d s.
\end{aligned}
\end{equation}
Now, under the assumptions of Theorem \ref{theo 1}, we can apply the Gronwall lemma and deduce that there exists a positive constant $C$ depending only on $T,\,\mu_0,\,\chi_0,\,C_0,\,\tau,\,c_3$, and $c_4$ such that
\begin{equation}\label{eqt5.29-b}
\begin{aligned}
& \mathbb{E}[|\bu_n(t\wedge \tau_n^R)|^2 + |\bw_n(t\wedge \tau_n^R)|^2 + |\bM_n(t\wedge \tau_n^R)|^2 + \mu_0 |\bH_n(t\wedge \tau_n^R)|^2] 
\\
& + [2\nu - (2-c_1)] \mathbb{E} \int_{0}^{t\wedge \tau_n^R}  |\nabla \bu_n(s)|^2\, \d s + [2\lambda_1 - (2-c_2)] \mathbb{E} \int_{0}^{t\wedge \tau_n^R} |\nabla \bw_n(s)|^2 \, \d s
 \\
& + [\lambda(2 - \sigma \mu_0^2 \lambda) - (\mu_0 + 1) (2-c_3) C_0] \mathbb{E} \int_{0}^{t\wedge \tau_n^R} \lvert \curl \bM_n(s) \rvert^2\, \d s
\\
& + \left[\sigma^{-1} - (2-c_4)C_0 \mu_0^{-1}\right] \mathbb{E} \int_0^{t\wedge \tau_n^R} \lvert \curl\bH_n(s) \rvert^2\,\d s + 2\tau^{-1} \mathbb{E} \int_0^{t\wedge \tau_n^R}  |\bM_n(s)|^2\, \d s
   \\
& + \left[2 (\mu_0 + 1) \lambda - (\mu_0 + 1) (2-c_3) C_0 - (2-c_4)C_0 \mu_0^{-1}\right] \mathbb{E} \int_0^{t\wedge \tau_n^R} |\diver\bM_n(s)|^2\,\d s 
\\
& + 2(\lambda_1 + \lambda_2)\mathbb{E} \int_{0}^{t\wedge \tau_n^R} \lvert \diver\bw_n(s) \rvert^2\, \d s + 2 \alpha \mathbb{E} \int_{0}^{t\wedge \tau_n^R} \lvert \curl\bu_n(s) - 2\bw_n(s) \rvert^2\, \d s
\\
&\leq C[1+ \lvert \bu_{0} \rvert^2 + \lvert \bw_{0} \rvert^2 + \lvert \bM_{0} \rvert^2 + \lvert \bH_{0} \rvert^2],
\end{aligned}
\end{equation}
from which we infer that $\tau_n^R \to \infty$ in probability as $R \to \infty$. Thus, there exists a subsequence $(\tau_n^{R_k})_{k \in \mathbb{N}}$ of $(\tau_n^R)_{R \in \mathbb{N}}$ such that $\tau_n^{R_k} \to \infty$, a.s. Furthermore, since the sequence $(\tau_n^R)_R$ is increasing, we infer $\tau_n^R \to \infty$ a.s. as $R \to \infty$. \newline
Now, because $t\wedge\tau_n^R \to t$ as $R \to \infty$, by passing to the limit in \eqref{eqt5.29-b} and using the Fatou Lemma, we deduce \eqref{eqt5.29-c}
\dela{
By the Burkholder-Davis-Gundy inequality we have
\begin{align*}
&\mathbb{E} \sup_{s\in[0,t\wedge\tau_n^R]} \left|2 \int_0^s \left(\bu_n(\tau) ,F_1(\bu_n(\tau))\d \beta^1(\tau)\right) \right| 
\leq C \mathbb{E} \left[\int_0^{t\wedge\tau_n^R} \|F_1(\bu_n(s))\|_{L_2(U_1,H)}^2 |\bu_n(s)|^2 \, \d s \right]^\frac{1}{2}
\\
& \leq C \mathbb{E} \sup_{s\in[0,t\wedge\tau_n^R]} \lvert \bu_n(s) \rvert \left[\int_0^{t\wedge\tau_n^R} \Vert F_1(\bu_n(s)) \Vert_{L_2(U_1,H)}^2\, \d s \right]^\frac{1}{2},
\end{align*}
and in turn, using Young's inequality jointly with \eqref{eq3.9}, we obtain
\begin{align}\label{eq5.29}
\begin{aligned}
&\mathbb{E} \sup_{s\in[0,t\wedge\tau_n^R]} \left|2 \int_0^s \left(\bu_n(\tau) ,F_1(\bu_n(\tau))\d \beta^1(\tau)\right) \right| 
\\ 
&\leq \frac{1}{2}  \mathbb{E} \sup_{s\in[0,t\wedge\tau_n^R]} \lvert \bu_n(s) \rvert^2 + C \mathbb{E} \int_0^{t\wedge\tau_n^R} \Vert F_1(\bu_n(s)) \Vert_{L_2(U_1,H)}^2\, \d s 
       \\
&\leq \frac{1}{2}  \mathbb{E} \sup_{s\in[0,t\wedge\tau_n^R]} \lvert \bu_n(s) \rvert^2 + (2-c_1)\ell^\ast \mathbb{E} \int_0^{t\wedge\tau_n^R} \lvert \nabla \bu_n(s) \rvert^2\,\d s. 
\end{aligned}
\end{align}
Here $\ell^\ast$ is a positive large constant independent of $n$ and $\lambda$. 
Analogously,
		\begin{equation}\label{eq5.30}
			\begin{split}
				\mathbb{E} \sup_{s\in[0,t\wedge\tau_n^R]} \left|2 \int_0^s\left( \bw_n(\tau),F_2(\bw_n(\tau))\d \beta^2(\tau)\right) \right|
				&\leq \frac{1}{2} \mathbb{E} \sup_{s\in[0,t\wedge\tau_n^R]} |\bw_n(s)|^2 \\
				&\quad + (2-c_2)\ell^\ast \mathbb{E} \int_0^{t\wedge\tau_n^R} |\nabla \bw_n(s)|^2 \, \d s.
			\end{split}
		\end{equation}
Once more, by the Burkholder-Davis-Gundy inequality in conjunction with \eqref{eq3.9}$_3$, we have
		\begin{equation}\label{eq5.37}
			\begin{aligned}
				& \mathbb{E} \sup_{s\in[0,t\wedge\tau_n^R]} \left|2 \mu_0 \int_0^s  \left(\bH_n(\tau)  ,G(\bM_n(\tau))\d \beta^3(\tau)\right) \right| \\
				& \leq C\mu_0 \mathbb{E} \left[\int_0^{t\wedge\tau_n^R} \|G(\bM_n(s))\|_{L_2(U_1,\mathbb{L}^2)}^2 |\bH_n(s)|^2 \, \d s \right]^{1/2} \\
				& \leq C \mu_0 \mathbb{E} \sup_{s\in[0,t\wedge\tau_n^R]} |\bH_n(s)| \left[\int_0^{t\wedge\tau_n^R} \|G(\bM_n(s))\|_{L_2(U_1,\mathbb{L}^2)}^2 \, \d s \right]^{1/2} \\
				&\leq \frac{\mu_0}{4}  \mathbb{E} \sup_{s\in[0,t\wedge\tau_n^R]} |\bH_n(s)|^2 + C \mu_0(2-c_3) \mathbb{E} \int_0^{t\wedge\tau_n^R} |\nabla \bM_n(s)|^2 \, \d s.
			\end{aligned}
		\end{equation}
Let us point out that 
\begin{align*}
(\bH_n(\tau), F_3(\bH_n(\tau))\d \beta^4(\tau))
&=  \int_{\mathcal{O}} \sum_{k=1}^{N} [ (j_k(x) \cdot \nabla) \bH_n(\tau) \cdot \bH_n(\tau) + q_k(x) \bH_n(\tau) \cdot \bH_n(\tau)] \, \d x \, \d \beta_{k}^4(\tau),
\\
\int_{\mathcal{O}} j_k^{(i)} \frac{\partial \bH_j}{\partial x_i} \bH_j \, \d x 
&= \int_{\mathcal{O}} j_k^{(i)} \frac{\partial}{\partial x_i} \left(\frac{1}{2} |\bH^j|^2\right)  \, \d x =- \frac{1}{2} \int_{\mathcal{O}} \diver j_k |\bH^j|^2 \, \d x=0,
\end{align*}
where for the last identity we we used an integration by parts, the assumption (H2) for $j_k$ and the summation convention over repeated indices. Hence,
\begin{equation}\label{eq5.39a}
\begin{aligned}
(\bH_n(\tau), F_3(\bH_n(\tau))\d \beta^4(\tau))
&= \sum_{k=1}^{N} ( \bH_n(\tau), q_k\bH_n(\tau) \, \d \beta_{k}^4(\tau)),
\end{aligned}
\end{equation}
and in turn, applying the the Burkholder-Davis-Gundy inequality, we obtain 

$\mathbb{E} \sup\limits_{s\in[0,t\wedge\tau_n^R]} \left|2\int_0^s \left(  \bH_n(\tau),F_3(\bH_n(\tau))\d \beta^4(\tau)\right) \right| \leq C \mathbb{E} \left[\int_0^{t\wedge\tau_n^R} \left(\sum\limits_{k=1}^{N} q_k \bH_n(s), \bH_n(s)\right)^2 \, \d s \right]^\frac{1}{2}$.

Furthermore, by \eqref{eq3.3}, H\"older's and Young's inequalities, we further obtain
\begin{equation}\label{eq5.39}
\begin{aligned}
&\mathbb{E} \sup_{s\in[0,t\wedge\tau_n^R]} \left|2\int_0^s \left(  \bH_n(\tau),F_3(\bH_n(\tau))\d \beta^4(\tau)\right) \right| 
\\
&\leq C (NC_7)^\frac{1}{2} \mathbb{E}  \left(\int_0^{t\wedge\tau_n^R} |\bH_n(s)|^4 \, \d s\right)^\frac{1}{2} \\
&\leq C (NC_7)^\frac{1}{2} [\mathbb{E} \sup_{s\in[0,t\wedge\tau_n^R]} |\bH_n(s)|^2]^\frac{1}{2} \left[\mathbb{E} \int_0^{t\wedge\tau_n^R} |\bH_n(s)|^2 \, \d s \right]^\frac{1}{2} \\
&\leq \frac{\mu_0}{4}  \mathbb{E} \sup_{s\in[0,t\wedge\tau_n^R]} |\bH_n(s)|^2 + \frac{ C N C_7}{\mu_0} \mathbb{E} \int_0^{t\wedge\tau_n^R} |\bH_n(s)|^2 \, \d s,
\end{aligned}
\end{equation}
where $C>0$ is independent of $n$ and $\lambda$, and it varies from one line to another. \newline
Now, collecting the numbered inequalities \eqref{eqt5.28}-\eqref{eq5.39} and using \eqref{eq3.9},  we arrive at
\begin{align*}
 & \frac{1}{2} \mathbb{E} \sup_{s\in[0,t\wedge\tau_n^R]} (|\bu_n(s)|^2 + |\bw_n(s)|^2 + 2 |\bM_n(s)|^2 + \mu_0  |\bH_n(s)|^2) \\
 & +  \mathbb{E}  \int_{0}^{t\wedge \tau_n^R} [2(\lambda_1 + \lambda_2) |\diver\bw_n(s)|^2 + 2\mu_0 \lambda |\diver\bM_n(s)|^2 + \sigma^{-1} |\curl \bH_n(s)|^2 ] \d s \\
 & + [2 \nu - 2 + c_1 -(2-c_1) \ell^\ast] \mathbb{E} \int_{0}^{t\wedge \tau_n^R} |\nabla \bu_n(s)|^2 \, \d s \\
 & + [2\lambda_1 - 2 + c_2 - (2-c_2) \ell^\ast] \mathbb{E}  \int_{0}^{t\wedge \tau_n^R} |\nabla  \bw_n(s)|^2 \, \d s  \\
 & +  \mathbb{E}  \int_{0}^{t\wedge \tau_n^R}  (2\lambda|\curl \bM_n(s)|^2 +  2\lambda |\diver\bM_n(s)|^2 +   \tau^{-1} |\bM_n(s)|^2  + 2\alpha |\curl\bu_n(s) - 2\bw_n(s)|^2) \d s \\
 &\leq |\bu_{0,n}|^2 + |\bw_{0,n}|^2 + |\bM_{0,n}|^2 + \mu_0 |\bH_{0,n}|^2 + \sigma \mu_0^2 \lambda^2 \mathbb{E}  \int_{0}^{t\wedge \tau_n^R} |\curl\bM_n(s)|^2 \, \d s  \\
 &\qquad +[(\mu_0^2 + \chi_0^2)\tau^{-1} + C N C_7 \mu_0^{-1} ] \mathbb{E} \int_0^{t\wedge\tau_n^R} |\bH_n(s)|^2 \, \d s + (2 - c_4)\mu_0^{-1} \mathbb{E}  \int_0^{t\wedge \tau_n^R} |\nabla\bH_n(s)|^2 \d s  \\
 &\qquad + C(\mu_0+1)(2-c_3) \mathbb{E} \int_0^{t\wedge\tau_n^R} |\nabla \bM_n(s)|^2 \, \d s.
\end{align*}
Here $\ell^\ast>0$ and $C>0$ are positive large constants depending only on $\mathcal{O}$ and such that $C\leq \ell^\ast$. Besides, by \eqref{eq2.1} and the fact that $\diver\bM_n=-\diver\bH_n$ in $\mathcal{O}$, we obtain
		\begin{equation*}
			\begin{aligned}
				& \mathbb{E} \int_0^{t\wedge \tau_n^R} [(2 - c_4)\mu_0^{-1} |\nabla\bH_n(s)|^2 + C(\mu_0+1)(2-c_3) |\nabla \bM_n(s)|^2] \, \d s \\
				& \leq (2 - c_4)\mu_0^{-1} C_0 \mathbb{E}  \int_0^{t\wedge \tau_n^R} (|\bH_n(s)|^2 + |\curl\bH_n(s)|^2 + |\diver \bM_n(s)|^2) \, \d s \\
				&\qquad + C C_0(2\mu_0+2) \mathbb{E} \int_0^{t\wedge\tau_n^R} |\bM_n(s)|^2 \d s  - C C_0 (\mu_0 + 1) c_3 \mathbb{E} \int_0^{t\wedge\tau_n^R} |\bM_n(s)|^2 \d s \\
				&\qquad + \ell^\ast (\mu_0+1)(2-c_3) C_0 \mathbb{E} \int_0^{t\wedge\tau_n^R} (|\curl\bM_n(s)|^2 + |\diver\bM_n(s)|^2) \, \d s.
			\end{aligned}
		\end{equation*}
Thus, we deduce that
		\begin{equation}\label{eqt5.40}
			\begin{aligned}
				& \frac{1}{2} \mathbb{E} \sup_{s\in[0,t\wedge\tau_n^R]} \mathcal{E}_{tot} (\bu_n(s),\bW_n(s),\bM_n(s),\bH_n(s)) \\
				& + [2 \nu - 2 + c_1 - (2-c_1) \ell^\ast] \mathbb{E} \int_{0}^{t\wedge \tau_n^R} |\nabla \bu_n|^2 \d s 
				\\
				& + [2\lambda_1 - 2 + c_2 - (2-c_2) \ell^\ast] \mathbb{E}  \int_{0}^{t\wedge \tau_n^R} |\nabla \bw_n(s)|^2 \d s  
				\\
				& + \left[ 2\lambda - \sigma \mu_0^2 \lambda^2 \ell^\ast - \ell^\ast (\mu_0+1)(2-c_3) C_0 \right] \mathbb{E}  \int_{0}^{t\wedge \tau_n^R} |\curl\bM_n|^2 \d s 
				\\
				& + \left[2\lambda (\mu_0 +1) - \ell^\ast (\mu_0+1)(2-c_3) C_0 - (2 - c_4)  C_0 \mu_0^{-1} \ell^\ast \right] \mathbb{E} \int_0^{t\wedge \tau_n^R} |\diver \bM_n|^2 \d s 
				\\
				& + [\sigma^{-1} - (2 - c_4 ) C_0 \mu_0^{-1} \ell^\ast] \mathbb{E}  \int_0^{t\wedge \tau_n^R} |\curl\bH_n|^2 \d s + 2(\lambda_1 + \lambda_2) \mathbb{E} \int_{0}^{t\wedge \tau_n^R} |\diver\bw_n|^2 \d s  
				\\ 
				& + [\tau^{-1} + C C_0 (\mu_0 + 1) c_3] \mathbb{E}  \int_{0}^{t\wedge \tau_n^R}  |\bM_n|^2 \, \d s + 2\alpha \mathbb{E}  \int_{0}^{t\wedge \tau_n^R} |\curl\bu_n - 2\bw_n|^2 \d s 
				\\
				&\leq \mathcal{E}_{tot} (\bu_0,\bW_0,\bM_0,\bH_0) + \tilde{C}_0 \mathbb{E} \int_{0}^{t\wedge \tau_n^R} \mathcal{E}_{tot} (\bu_n(s),\bW_n(s),\bM_n(s),\bH_n(s)) \, \d s,
			\end{aligned}
		\end{equation}				
where $\tilde{C}_0 = (\mu_0^2 + \chi_0^2)\tau^{-1} \mu_0^{-1} + C N C_7 \mu_0^{-2} + (2 - c_4)\mu_0^{-2} C_0 +  C C_0(2\mu_0+2)$. 
Hence, under the assumptions \eqref{eq4.12}-\eqref{eqt4.14} of Theorem \ref{theo 1}, an application of the Gronwall lemma entails
		\begin{equation}
			\begin{aligned}
				& \mathbb{E} \sup_{s\in[0,t\wedge\tau_n^R]} \mathcal{E}_{tot}(\bu_n(s),\bw_n(s),\bM_n(s),\bH_n(s)) + \mathbb{E} \int_0^{t\wedge \tau_n^R} (|\nabla \bu_n(s)|^2 + |\nabla \bw_n(s)|^2) \, \d s
				\\
				&\quad  + \mathbb{E} \int_0^{t\wedge \tau_n^R} (|\diver\bw_n(s)|^2 + |\diver \bM_n(s)|^2 + |\bM_n(s)|^2  + |\curl\bM_n(s)|^2) \, \d s
				\\
				&\quad + \mathbb{E} \int_0^{t\wedge \tau_n^R} (|\curl\bH_n(s)|^2 + |\curl \bu_n(s) - 2\bw_n(s)|^2) \, \d s
				\\
				& \leq C \mathcal{E}_{tot}(\bu_0,\bw_0,\bM_0,\bH_0), \quad \forall t \in [0,T].
			\end{aligned}
		\end{equation}
The above inequality implies that as $R \to \infty$, $t\wedge\tau_n^R \to t$. Hence, by passing to the limit and using the Fatou Lemma,  we derive that
		\begin{equation}\label{eq5.40}
			\begin{aligned}
				& \mathbb{E} \sup_{s\in[0,t ]} \mathcal{E}_{tot}(\bu_n(s),\bw_n(s),\bM_n(s),\bH_n(s)) + \mathbb{E} \int_0^{t} (|\nabla \bu_n(s)|^2 + |\nabla \bw_n(s)|^2)\, \d s 
				\\
				&\quad  + \mathbb{E} \int_0^{t} (|\diver\bw_n(s)|^2 + |\diver\bM_n(s)|^2 + |\curl\bM_n(s)|^2) \, \d s 
				\\
				&\quad + \mathbb{E} \int_0^{t} (|\curl\bH_n(s)|^2 + |\bM_n(s)|^2 + |\curl \bu_n(s) - 2\bw_n(s)|^2) \, \d s 
				\\
				& \leq C\mathcal{E}_{tot}(\bu_0,\bw_0,\bM_0,\bH_0), 
			\end{aligned}
		\end{equation}
 which proves the desired estimates.
 }
 \newline
Finally, from \eqref{eq2.1} and \eqref{eqt5.29-b}, we deduce \eqref{eqt5.29-c}. This completes the proof of the lemma.
\end{proof}
\begin{lemma}\label{lem6.4}
Let $p \in [2, 4]$ be fixed and let the assumptions of Lemma \ref{eqt5.29-c} be satisfied. Then,  there exists $C>0$ such that for all $t \in[0,T]$ and every $n \in \mathbb{N}$, 
		\begin{equation}\label{eq5.42}
			\begin{aligned}
				& \mathbb{E}\sup_{s \in [0,t]}[\mathcal{E}_{tot}(\bu_n(s),\bw_n(s),\bM_n(s),\bH_n(s))]^p + \mathbb{E} \left( \int_{0}^{t} |\nabla \bu_n(s)|^2 \d s\right)^p 
				\\
				&\quad +  \mathbb{E} \left(\int_{0}^{t} |\nabla  \bw_n(s)|^2 \d s\right)^p +  \mathbb{E} \left(\int_{0}^{t} |\curl\bM_n(s)|^2 \, \d s\right)^p +  \mathbb{E} \left(\int_0^{t} |\diver\bM_n(s)|^2 \d s\right)^p
				\\
				&\quad + \mathbb{E} \left(\int_0^{t} |\curl\bH_n(s)|^2 \, \d s\right)^p + \mathbb{E} \left(\int_{0}^{t}  |\bM_n(s)|^2 \, \d s\right)^p + \mathbb{E} \left(\int_0^{t} |\bH_n(s)|^2 \d s\right)^p
				\\
				&\quad  + \mathbb{E} \left(\int_{0}^{t} |\diver\bw_n(s)|^2 \d s\right)^p + \mathbb{E} \left(\int_{0}^{t} |\curl\bu_n(s) - 2\bw_n(s)|^2 \d s\right)^p
				\\
				&\leq C[1+ \mathcal{E}_{tot}(\bu_0,\bw_0,\bM_0,\bH_0)]^p
			\end{aligned}
		\end{equation}
and  
\begin{equation}\label{eqt5.43}
\mathbb{E} \left(\int_0^t \|\bM_n(s)\|_{V_1}^2 \, \d s \right)^p + \mathbb{E} \left(\int_0^t \|\bH_n(s)\|_{V_1}^2 \, \d s \right)^p \leq  C [ 1+ \mathcal{E}_{tot}(\bu_0,\bw_0,\bM_0,\bH_0)]^p.  
\end{equation}  
\end{lemma}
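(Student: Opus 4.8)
The plan is to raise the energy identity behind Lemma \ref{Lem-2} to the $p$-th power and run a Gronwall argument, the only genuinely new ingredient being the treatment of the stochastic integrals via the Burkholder--Davis--Gundy inequality \eqref{Eq:BDG}. Since $\mathbb{E}Y^p\le(\mathbb{E}Y^4)^{p/4}$ for every nonnegative $Y$ and $p\in[2,4]$, and \eqref{eq5.42} is a sum of such terms, I would first reduce to $p=4$, so that the constant $C(p)$ in \eqref{Eq:BDG} may be taken equal to $C(4)$. Starting from the stopped identity \eqref{eqt5.29}, and bounding the It\^o corrections by \eqref{eq3.9} and the cross terms by \eqref{eq5.28} and \eqref{eq2.1} exactly as in the passage to \eqref{eqt5.29-a}, I would rewrite everything pathwise as
\[
X_n(t\wedge\tau_n^R)+K_n(t\wedge\tau_n^R)\le X_n(0)+D_n(t\wedge\tau_n^R)+N_n(t\wedge\tau_n^R),
\]
where $X_n=|\bu_n|^2+|\bw_n|^2+|\bM_n|^2+\mu_0|\bH_n|^2$, $K_n$ is the sum of all dissipation integrals on the left of \eqref{eqt5.29-c} (each with a strictly positive coefficient by \eqref{eq4.12}--\eqref{eqt4.14}), $D_n(t)\le C\int_0^tX_n(s)\,\d s$ collects the lower order remainder, and $N_n=N^{(1)}+N^{(2)}+N^{(3)}$ with $N^{(1)}=2\int_0^{\cdot}(\bu_n,F_1(\bu_n)\,\d\beta^1)$, $N^{(2)}=2\int_0^{\cdot}(\bw_n,F_2(\bw_n)\,\d\beta^2)$, $N^{(3)}=-2\mu_0\int_0^{\cdot}(\bH_n,G(\bM_n)\,\d\beta^3)$; the integral $2\int(\bH_n,F_3(\bH_n)\,\d\beta^4)$ (and, were it present, the $G$-term paired with $\bM_n$) disappears since $(\bH_n,(j_k\cdot\nabla)\bH_n)=-\tfrac12\int_{\mathcal O}(\diver j_k)|\bH_n|^2=0$ because $\diver j_k=0$ and $j_k=0$ on $\Gamma$, just as in the proof of Lemma \ref{Lem-2}. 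Using $X_n,K_n\ge0$ and raising the resulting bound for $\sup_{s\le t\wedge\tau_n^R}X_n(s)+K_n(t\wedge\tau_n^R)$ to the power $p$ with $(\sum_{i=1}^ma_i)^p\le m^{p-1}\sum_ia_i^p$, then taking expectations, gives
\[
\mathbb{E}\sup_{s\le t\wedge\tau_n^R}X_n(s)^p+c_p\,\mathbb{E}K_n(t\wedge\tau_n^R)^p\le C_p\Big(X_n(0)^p+\mathbb{E}D_n(t\wedge\tau_n^R)^p+\mathbb{E}\sup_{s\le t\wedge\tau_n^R}|N_n(s)|^p\Big).
\]

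\emph{Easy terms.} By H\"older in time, $\mathbb{E}D_n(t\wedge\tau_n^R)^p\le Ct^{p-1}\int_0^t\mathbb{E}\sup_{\tau\le s\wedge\tau_n^R}X_n(\tau)^p\,\d s$, which is of Gronwall type. For $N^{(1)}$ and $N^{(2)}$ I would use the sharp quadratic-variation bounds produced by integration by parts: since $\bu_n,\bw_n$ vanish on $\Gamma$, $(\bu_n,(b_k\cdot\nabla)\bu_n)=-\tfrac12\int_{\mathcal O}(\diver b_k)|\bu_n|^2$, whence $\langle N^{(1)}\rangle_t\le C_5\int_0^t|\bu_n|^4\,\d s$ and similarly $\langle N^{(2)}\rangle_t\le C_6\int_0^t|\bw_n|^4\,\d s$; by \eqref{Eq:BDG} and H\"older this yields $\mathbb{E}\sup_{s\le t\wedge\tau_n^R}|N^{(i)}(s)|^p\le Ct^{p/2-1}\int_0^t\mathbb{E}\sup_{\tau\le s\wedge\tau_n^R}X_n(\tau)^p\,\d s$ for $i=1,2$, again with no absorption needed.

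\emph{The crux: $N^{(3)}$.} Here there is no cancellation and one exploits smallness of the noise coefficient instead. By Cauchy--Schwarz, \eqref{eq3.9} and \eqref{eq2.1},
\[
\langle N^{(3)}\rangle_t\le 4\mu_0^2\!\int_0^t\!\|G(\bM_n)\|_{L_2(U_1,\mathbb{L}^2)}^2|\bH_n|^2\,\d s\le 4\mu_0^2(2-c_3)C_0\Big(\sup_{s\le t}|\bH_n(s)|^2\Big)\!\int_0^t\!(|\bM_n|^2+|\curl\bM_n|^2+|\diver\bM_n|^2)\,\d s,
\]
so, by \eqref{Eq:BDG} and Young's inequality, $\mathbb{E}\sup_{s\le t\wedge\tau_n^R}|N^{(3)}(s)|^p$ is bounded by $C(4)\,[4\mu_0^2(2-c_3)C_0]^{p/2}$, times a purely numerical factor, applied to $\mathbb{E}\sup_sX_n(s)^p$ plus $\mathbb{E}\big(\int_0^t(|\bM_n|^2+|\curl\bM_n|^2+|\diver\bM_n|^2)\,\d s\big)^p$; the latter splits, via $(a+b+c)^p\le3^{p-1}(a^p+b^p+c^p)$ and H\"older in time, into a Gronwall term from $\int|\bM_n|^2$ and two dissipation $p$-th powers dominated by $\mathbb{E}K_n(t\wedge\tau_n^R)^p$. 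The decisive point is that $(2-c_3)C_0\le(\sigma\mu_0^2(\mu_0+1)(\ell^\ast)^2)^{-1}$ by \eqref{eq4.12}, so that together with the hypothesis $2\cdot3^{16}[C(4)]^2<\ell^\ast$ (and the analogous smallness of $2-c_4$, already built into the coefficients of \eqref{eqt5.29-c}) the constant $C(4)\,[4\mu_0^2(2-c_3)C_0]^{p/2}$, multiplied by all numerical factors, stays strictly below both $1$ and $c_p$ times the positive dissipation coefficients on the left; hence every such contribution can be absorbed into the left-hand side. This is precisely the mechanism already used for the first moment in the proof of Lemma \ref{Lem-2}, now carried out at the $p$-th power level, and it is the part where I expect the real work (careful bookkeeping of constants) to lie.

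\emph{Conclusion.} After absorption, $\Phi_n(t):=\mathbb{E}\sup_{s\le t\wedge\tau_n^R}X_n(s)^p+c_p'\,\mathbb{E}K_n(t\wedge\tau_n^R)^p$ satisfies $\Phi_n(t)\le C[1+\mathcal{E}_{tot}(\bu_0,\bw_0,\bM_0,\bH_0)]^p+C\int_0^t\Phi_n(s)\,\d s$ (using \eqref{eq5.4}), so Gronwall gives a bound uniform in $n$ and $R$; letting $R\to\infty$, where $\tau_n^R\to\infty$ a.s.\ by Lemma \ref{Lem-2}, and applying Fatou's lemma yields \eqref{eq5.42}. Finally, \eqref{eqt5.43} follows from \eqref{eq5.42} and the equivalence of norms \eqref{eq2.1} (with $\diver\bH_n=-\diver\bM_n$), by raising $\int_0^t\|\cdot\|_{V_1}^2\,\d s$ to the power $p$ and using Minkowski's inequality in $L^p(\Omega)$ together with H\"older in time.
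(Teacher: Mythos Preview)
Your proposal is correct and follows the paper's route: reduce to $p=4$, raise the energy identity \eqref{eqt5.29} to the fourth power, control the martingale terms by \eqref{Eq:BDG}, absorb using the smallness hypotheses \eqref{eq4.12}--\eqref{eqt4.14}, close with Gronwall, and then deduce \eqref{eqt5.43} from \eqref{eq5.42} via \eqref{eq2.1}.

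The one genuine difference is your handling of $N^{(1)}$ and $N^{(2)}$. You exploit the integration-by-parts identity $(\bu_n,(b_k\cdot\nabla)\bu_n)=-\tfrac12\int_{\mathcal O}(\diver b_k)\,|\bu_n|^2$ (valid because $\bu_n\in V$, $\bw_n\in\mathbb{H}_0^1$ vanish on $\Gamma$) to bound the quadratic variations by $C_5\int_0^t|\bu_n|^4\,\d s$ and $C_6\int_0^t|\bw_n|^4\,\d s$, which after BDG and H\"older become pure Gronwall terms requiring no absorption into the dissipation. The paper instead uses \eqref{eq3.9} directly, obtaining $(2-c_1)^4\,\mathbb{E}\bigl(\int_0^t|\nabla\bu_n|^2\,\d s\bigr)^4$ on the right and absorbing it into the left; this is exactly where the conditions on $c_1,c_2$ in \eqref{eq4.12} enter at the $p$-th-moment level. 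Your variant is a little cleaner for those two terms, while the paper's keeps the argument structurally parallel to the treatment of $N^{(3)}$ and makes all the constraints in \eqref{eq4.12} visibly active. Two further minor procedural differences: the paper drops the stopping times at the outset (Lemma~\ref{Lem-2} already rules out explosion), whereas you carry $\tau_n^R$ through and apply Fatou at the end; and the paper bounds a residual lower-order piece of the $F_3$ stochastic integral via BDG rather than observing, as you do, that $(\bH_n,(j_k\cdot\nabla)\bH_n)=0$ under (H2).
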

\begin{proof}
\dela{
Throughout this proof, $ C(p)>0$ will denote a positive constant independent of $n$ and $\lambda$, but it may depend on the problem data and may vary from one term to the other. \newline
The proof of \eqref{eqt5.43} follows directly from \eqref{eq5.42} and \eqref{eq2.1}. Now, let us proceed to the proof of \eqref{eq5.42}. Once we have proved Lemma \ref{lem2}, which is the analogue of Lemma \ref{lem6.4} for the case $p=1$, each term of  \eqref{eqt5.29} is well-defined, and therefore we do not need to use the stopping times anymore. 
}
Obviously, the proof of \eqref{eqt5.43} follows directly from \eqref{eq5.42} and \eqref{eq2.1}. 
\newline
Now, let us proceed to the proof of \eqref{eq5.42}.  First, as a direct consequence of \eqref{eqt5.29-c}, we deduce that each term of \eqref{eqt5.29} is well defined, and therefore we no longer need to use stopping times. Thus, in view of \eqref{eqt5.29}, \eqref{eq3.9} and \eqref{eq5.4}, we have for all $t \in[0,T]$, 
		\begin{align*}
			& \mathcal{E}_{tot}(\bu_n(t),\bw_n(t),\bM_n(t),\bH_n(t)) +  \int_{0}^{t} [(2 \nu - 2 + c_1) |\nabla \bu_n(s)|^2 +  (2\lambda_1 - 2 + c_2)|\nabla  \bw_n(s)|^2] \d s
			\\
			&  + \int_{0}^{t} [2\lambda \lvert \curl\bM_n(s) \rvert^2 + 2(\mu_0+1) \lambda \lvert \diver\bM_n(s) \rvert^2 + 2\sigma^{-1}  \lvert \curl\bH_n(s) \rvert^2 + 2\tau^{-1} \lvert \bM_n(s) \rvert^2] \d s
			\\
			&  +  \int_0^{t} [2\mu_0 \chi_0 \tau^{-1} |\bH_n(s)|^2  + 2(\lambda_1 + \lambda_2) |\diver\bw_n(s)|^2 + 2\alpha |\curl\bu_n(s) - 2\bw_n(s)|^2] \, \d s
			\\ 
			&\leq \mathcal{E}_{tot}(\bu_0,\bw_0,\bM_0,\bH_0) + \int_{0}^{t}  [2(\chi_0+\mu_0)\tau^{-1} |\bM_n(s)| |\bH_n(s)| + 2\mu_0 \lambda |\curl \bM_n(s)| |\curl\bH_n(s)|] \d s 
			\\
			&\quad + (\mu_0 + 1) \int_{0}^{t}  \|G(\bM_n(s))\|_{L_2(U_1,\mathbb{L}^2)}^2 \, \d s + \mu_0^{-1} \int_0^{t} \|F_3(\bH_n(s))\|_{L_2(U_2,\mathbb{L}^2)}^2 \d s
			\\ 
			&\quad   + 2 \int_{0}^{t}\left( \bu_n(s), F_1(\bu_n(s)) \d \beta^1(s)\right) + 2  \int_{0}^{t}\left( \bw_n(s), F_2(\bw_n(s)) \d \beta^2(s)\right)  
			\\
			&\quad - 2\mu_0 \int_{0}^{t}\left( \bH_n(s), G(\bM_n(s)) \d \beta^3(s)\right)  + 2 \int_{0}^{t}\left( \bH_n(s), F_3(\bH_n(s)) \d \beta^4(s)\right).
		\end{align*}
Next, we observe that it is enough to prove the Proposition for $p=4$, the case $2\leq p \leq 3$ follows from H\"older inequality and the estimates for $p= 4$. \newline
 Now, we take $\mathbb{E} \left[\sup_{s \in [0,t]} \lvert \cdot \rvert^4 \right]$. Hence,
\begin{align}\label{eq5.47}
& \mathbb{E}\sup_{s \in [0,t]}[\mathcal{E}_{tot}(\bu_n(s),\bw_n(s),\bM_n(s),\bH_n(s))]^4 \notag 
\\
& + (2 \nu - 2 + c_1)^4 \mathbb{E} \left(\int_{0}^{t} \lvert \nabla \bu_n \rvert^2 \d s\right)^4 + (2\lambda_1 - 2 + c_2)^4 \mathbb{E} \left(\int_{0}^{t} \lvert \nabla  \bw_n \rvert^2 \d s\right)^4 \notag 
  \\
& + 2^4 \lambda^4 \mathbb{E} \left(\int_{0}^{t} \lvert \curl\bM_n \rvert^2\, \d s\right)^4 + 2^4 \lambda^4 (\mu_0 +1)^4 \mathbb{E} \left(\int_0^{t} \lvert \diver\bM_n \rvert^2 \d s\right)^4 \notag 
    \\
& + \frac{2^4}{\sigma^4} \mathbb{E} \left(\int_0^{t} \lvert \curl\bH_n \rvert^2\, \d s\right)^4 + \frac{2^4}{\tau^4} \mathbb{E} \left(\int_{0}^{t} \lvert \bM_n \rvert^2\, \d s\right)^4 + \frac{2^4 \mu_0^4 \chi_0^4}{\tau^4} \mathbb{E} \left(\int_0^{t} \lvert \bH_n \rvert^2\, \d s\right)^4 \notag 
\\
& + 2^4 (\lambda_1 + \lambda_2)^4 \mathbb{E} \left(\int_{0}^{t} \lvert \diver\bw_n \rvert^2\, \d s\right)^4 + 2^4 \alpha^4 \mathbb{E} \left(\int_{0}^{t} \lvert \curl\bu_n - 2\bw_n \rvert^2\, \d s\right)^4            
   \\
& \leq 9^3 \bigg[ [\mathcal{E}_{tot}(\bu_0,\bw_0,\bM_0,\bH_0)]^4 + (\chi_0+\mu_0)^4 \tau^{-4} \mathbb{E} \left(\int_{0}^{t} \lvert \bM_n \rvert \lvert \bH_n \rvert\, \d s\right)^4 \notag 
     \\
&\qquad + \mu_0^4 \lambda^4 \mathbb{E} \left(\int_0^{t} \lvert \curl \bM_n \rvert \lvert \curl\bH_n \rvert\,\d s\right)^4 + (\mu_0 + 1)^4 \mathbb{E} \left(\int_{0}^{t}  \Vert G(\bM_n) \vert_{L_2(U_1,\mathbb{L}^2)}^2\, \d s \right)^4 \notag 
\\
&\qquad + \mu_0^{-4} \mathbb{E} \left(\int_0^{t} \Vert F_3(\bH_n) \Vert_{L_2(U_2,\mathbb{L}^2)}^2\, \d s\right)^4 + \mathbb{E} \sup_{s\in [0,t]} \left\lvert \int_{0}^{s}\left(\bu_n(s), F_1(\bu_n(s))\d \beta^1(s)\right)\right\rvert^4 \notag 
   \\
&\qquad + \mathbb{E} \sup_{s\in [0,t]} \left \lvert \int_{0}^{s}\left(\bw_n(s), F_2(\bw_n(s))\d \beta^2(s)\right) \right\rvert^4 + \mu_0^4 \mathbb{E} \sup_{s\in [0,t]} \left\lvert \int_{0}^{s}\left( \bH_n(s), G(\bM_n(s))\d \beta^3(s)\right)\right\rvert^4 \notag  
\\
&\qquad + \mathbb{E} \sup_{s\in [0,t]} \left \lvert \int_{0}^{s}\left( \bH_n(s), F_3(\bH_n(s))\d \beta^4(s)\right)\right \rvert^4\bigg]. \notag 
\end{align}
We proceed to estimate the terms on the RHS of \eqref{eq5.47}. First, by H\"older's and Young's inequalities, we infer that
\begin{equation}
\begin{aligned}
& 9^3 \mu_0^4 \lambda^4 \mathbb{E} \left(\int_0^{t} \lvert \curl\bM_n(s) \rvert \lvert \curl\bH_n(s) \rvert\, \d s \right)^4
\\
& \leq \frac{2^{3}}{\sigma^{4}} \mathbb{E} \left(\int_0^{t} \lvert \curl\bH_n(s) \rvert^2\, \d s \right)^4 + \frac{9^6 \mu_0^8 \lambda^8 \sigma^4}{32} \mathbb{E} \left(\int_0^{t} \lvert \curl \bM_n(s) \rvert^2\, \d s \right)^4
\end{aligned}
\end{equation}
and
\begin{equation}
\begin{aligned}
& 9^3 (\mu_0 + \chi_0)^4 \tau^{-4} \mathbb{E} \left(\int_0^{t} \lvert \bM_n(s) \rvert \lvert \bH_n(s) \rvert\, \d s \right)^4 
\\
& \leq 9^3 (\mu_0 + \chi_0)^4 \tau^{-4} \mathbb{E} \left[\left(\int_0^{t} \lvert \bM_n(s) \rvert^2\, \d s \right)^2 \left(\int_0^{t} \lvert \bH_n(s) \rvert^2\, \d s \right)^2\right] 
  \\
& \leq \frac{2^{3}}{\sigma^{4}} \mathbb{E} \left(\int_0^{t} \lvert \bM_n(s) \rvert^2\, \d s \right)^4 + \frac{9^6 (\mu_0 + \chi_0)^8 \tau^{-8}}{32} \mathbb{E} \left(\int_0^{t} \lvert \bH_n(s) \rvert^2\, \d s \right)^4 
     \\
& \leq \frac{2^{3}}{\sigma^{4}} \mathbb{E} \left(\int_0^{t} \lvert \bM_n(s) \rvert^2\, \d s \right)^4 + \frac{9^6 (\mu_0 + \chi_0)^8 \tau^{-8}}{32} t^{3} \mathbb{E} \int_0^{t} \lvert \bH_n(s) \rvert^{8}\, \d s. 
\end{aligned}
\end{equation}
Next, in view of \eqref{eq2.1} and \eqref{eq3.9}, we infer that
\begin{align*}
& 9^3 (\mu_0 + 1)^4 \mathbb{E} \left(\int_0^{t}  \Vert G(\bM_n(s)) \Vert_{L_2(U_1,\mathbb{L}^2)}^2\,\d s \right)^4 
\leq 9^3 (2-c_3)^4 (\mu_0 + 1)^4 \mathbb{E} \left(\int_0^{t} \lvert \nabla \bM_n(s) \rvert^2\,\d s \right)^4
\\
& \leq 9^3 (2-c_3)^4 C_0^4 (\mu_0 + 1)^4 \mathbb{E} \left(\int_0^{t} (\lvert \bM_n(s) \rvert^2 + \lvert \curl\bM_n(s) \rvert^2 + \lvert \diver \bM_n(s) \rvert^2)\,\d s \right)^4,
\end{align*} 
and in turn, applying the H\"older inequality, we deduce that  
\begin{equation}\label{eq5.50}
\begin{aligned}
& 9^3 (\mu_0 + 1)^4 \mathbb{E} \left(\int_0^{t}  \Vert G(\bM_n(s)) \Vert_{L_2(U_1,\mathbb{L}^2)}^2\,\d s \right)^4 
\leq 3^{9} (2-c_3)^4 C_0^4 (\mu_0 + 1)^4 \mathbb{E} \left[ t^{3} \int_0^{t} \lvert \bM_n(s) \rvert^{8}\, \d s \right.
\\
& \left. + \left(\int_{0}^{t} \lvert \curl\bM_n(s) \rvert^2\, \d s\right)^4 + \left(\int_{0}^{t} \lvert \diver\bM_n(s) \rvert^{2}\, \d s\right)^4 \right].
\end{aligned}
\end{equation} 
By making use of the Burholder-Davis-Gundy and Cauchy-Schwarz inequalities, we find that
		\begin{equation}\label{eq5.51}
			\begin{aligned}
				& 9^3 \mu_0^4 \mathbb{E} \sup_{s\in[0,t]} \left\lvert \int_0^s ( \bH_n(\tau), G(\bM_n(\tau))\d \beta^3(\tau))\right\rvert^4 
                    \leq 9^3 \mu_0^4 C(4) \mathbb{E} \left(\int_0^{t} \Vert G(\bM_n(s)) \Vert_{L_2(U_1,\mathbb{L}^2)}^2 \lvert \bH_n(s) \rvert^2\, \d s \right)^2 
                      \\
				&\leq 9^3 \mu_0^4 C(4) \mathbb{E} \left[\sup_{s\in [0,t]} \lvert \bH_n(s) \rvert^4 \left(\int_0^{t} \Vert G(\bM_n(s)) \Vert_{L_2(U_1,\mathbb{L}^2)}^2\, \d s \right)^2\right] 
                        \\
				&\leq 9^3 \mu_0^4 C(4) \mathbb{E} [\sup_{s\in [0,t]} \lvert \bH_n(s) \rvert^{8}]^\frac{1}{2} \left[\mathbb{E} \left(\int_0^{t} \Vert G(\bM_n(s)) \Vert_{L_2(U_1,\mathbb{L}^2)}^2\, \d s \right)^4\right]^\frac{1}{2} 
                          \\
				&\leq \frac{\mu_0^4}{12} \mathbb{E} \left[\sup_{s\in [0,t]} \lvert \bH_n(s) \rvert^{8}\right] + 3^{13} \mu_0^8 [C(4)]^2 \mathbb{E} \left(\int_0^{t}  \Vert G(\bM_n(s)) \Vert_{L_2(U_1,\mathbb{L}^2)}^2\, \d s \right)^4.
			\end{aligned}
		\end{equation}
In view of \eqref{eq5.50} and \eqref{eq5.51}, we infer that
\begin{equation}
\begin{aligned}
& 9^3 \mu_0^4 \mathbb{E} \sup_{s\in[0,t]} \left\lvert \int_0^s ( \bH_n(\tau), G(\bM_n(\tau))\d \beta^3(\tau))\right\rvert^4 
\\	
&\leq \frac{\mu_0^4}{12} \mathbb{E} \sup_{s\in [0,t]} \lvert \bH_n(s) \rvert^{8} + 3^{16} (2-c_3)^4 C_0^4 \mu_0^4 [C(4)]^2 \mathbb{E} \left[ t^{3} \int_0^{t} \lvert \bM_n(s) \rvert^{8}\, \d s \right.
\\
& \left. + \left(\int_{0}^{t} \lvert \curl\bM_n(s) \rvert^2\, \d s\right)^4 + \left(\int_{0}^{t} \lvert \diver\bM_n(s) \rvert^{2}\, \d s\right)^4 \right].
\end{aligned}
\end{equation}
Next, using the Burholder-Davis-Gundy inequality and the H\"older inequality, we infer that for every $n \in \mathbb{N}$,
\begin{align*}
9^3 \mathbb{E} \sup_{s\in[0,t]} \left\lvert \int_0^s (\bu_n, F_1(\bu_n)\d \beta^1(\tau))\right \rvert^4 
&\leq 9^3 C(4) \mathbb{E} \left(\int_0^{t} \Vert F_1(\bu_n(s)) \Vert_{L_2(U_1,H)}^2 \lvert \bu_n(s) \rvert^2\, \d s \right)^2
\\
& \leq 9^3 C(4) \left[\mathbb{E} \sup_{s \in [0,t]} \lvert \bu_n(s) \rvert^4 \left(\int_0^{t} \Vert F_1(\bu_n(s)) \Vert_{L_2(U_1,H)}^2\, \d s \right)^2 \right].
\end{align*}
On the other hand, due to the Young inequality and \eqref{eq3.9}, we further obtain
\begin{equation}
\begin{aligned}
& 9^3 \mathbb{E} \sup_{s\in[0,t]} \left\lvert \int_0^s (\bu_n, F_1(\bu_n)\d \beta^1(\tau))\right \rvert^4
\\
&\leq \frac16 \mathbb{E} \sup_{s \in [0,t]} \lvert \bu_n(s) \rvert^8 + \frac{3^{13} [C(4)]^2}{2} \mathbb{E} \left(\int_0^{t} \Vert F_1(\bu_n(s)) \Vert_{L_2(U_1,H)}^2\, \d s \right)^4 
\\
&\leq \frac16 \mathbb{E} \sup_{s \in [0,t]} \lvert \bu_n(s) \rvert^8 + \frac{3^{13} [C(4)]^2 (2-c_1)^4}{2} \mathbb{E} \left(\int_0^{t} \lvert \nabla \bu_n(s) \rvert^2\, \d s\right)^4.
\end{aligned}
\end{equation}
Analogously,
		\begin{equation}
			\begin{aligned}
				&  9^3 \mathbb{E} \sup_{s\in[0,t]} \left \lvert \int_0^s (\bw_n(\tau), F_2(\bw_n(\tau))\d \beta^2(\tau)) \right \rvert^4 
				\\
				&\leq \frac16 \mathbb{E} \sup_{s \in [0,t]} \lvert \bw_n(s) \rvert^{8} + \frac{3^{13} [C(4)]^2 (2-c_2)^4}{2} \mathbb{E} \left(\int_0^{t} \lvert \nabla \bw_n(s) \rvert^2\, \d s\right)^4.
			\end{aligned}
		\end{equation}
Next, arguing as in \eqref{eq5.50}, we deduce that
\begin{equation}
\begin{aligned}
& \mu_0^{-4} \mathbb{E} \left(\int_0^{t} \Vert F_3(\bH_n(s))\Vert_{L_2(U_2,\mathbb{L}^2)}^2\, \d s\right)^4 
\leq 3^{3} (2-c_4)^4 C_0^4 \mu_0^{-4} \mathbb{E} \left[ t^{3} \int_0^{t} \lvert \bH_n(s) \rvert^{8}\, \d s \right.
\\
& \left. + \left(\int_{0}^{t} \lvert \curl\bH_n(s) \rvert^2\, \d s\right)^4 + \left(\int_{0}^{t} \lvert \diver\bH_n(s) \rvert^{2}\, \d s\right)^4 \right].
\end{aligned}
\end{equation} 
Now, let us point out that 
\begin{align*}
(\bH_n, F_3(\bH_n)\d \beta^4)
&=  \int_{\mathcal{O}} \sum_{k=1}^{N} [(j_k(x) \cdot \nabla) \bH_n(x) \cdot \bH_n(x) + q_k(x) \bH_n(x) \cdot \bH_n(x)]\, \d x \, \d \beta_{k}^4,
\\
\int_{\mathcal{O}} j_k^{(i)} \frac{\partial \bH_j}{\partial x_i} \bH_j\, \d x 
&= \int_{\mathcal{O}} j_k^{(i)} \frac{\partial}{\partial x_i} \left(\frac{1}{2} |\bH^j|^2\right)\, \d x= - \frac{1}{2} \int_{\mathcal{O}} \diver j_k |\bH^j|^2 \, \d x=0,
\end{align*}
where for the last identity we used an integration by parts, the assumption (H2) for $j_k$ and the summation convention over repeated indices. Hence,
\begin{equation*}
\begin{aligned}
(\bH_n, F_3(\bH_n)\d \beta^4)
&= \sum_{k=1}^{N} ( \bH_n, q_k\bH_n \, \d \beta_{k}^4),
\end{aligned}
\end{equation*}
and in turn, applying the Burkholder-Davis-Gundy inequality and using \eqref{eq3.3}$_3$, we infer that for every $n \in \mathbb{N}$,
 \begin{align*}
& 9^3 \mathbb{E} \sup_{s \in [0,t]}\left \lvert \int_0^s (\bH_n(\tau), F_3(\bH_n(\tau))\d \beta^4(\tau)) \right \rvert^4 
\leq 9^3 C(4) \mathbb{E} \left[\int_0^{t} \left(\sum_{k=1}^{N} q_k \bH_n(s), \bH_n(s)\right)^2\, \d s \right]^2
\\
&\leq 9^3 C(4) \mathbb{E} \left[\int_0^{t} \left(\sum_{k=1}^{N} \Vert q_k \Vert_{L^\infty(\mathcal{O})} \lvert \bH_n \rvert^2 \right)^2\, \d s \right]^2
\leq 9^3 C(4) (NC_7)^2 \mathbb{E}  \left(\int_0^{t} \lvert \bH_n \rvert^4\, \d s\right)^2 
\\
&\leq 9^3 C(4) (NC_7)^2 \mathbb{E} \left[\sup_{s\in[0,t]} \lvert \bH_n \rvert^4 \left(\int_0^{t} \lvert \bH_n \rvert^2\, \d s\right)^2 \right] 
  \\
&\leq \frac{\mu_0^4}{12} \mathbb{E} \sup_{s\in[0,t]} \lvert \bH_n(s) \rvert^{8} + \frac{3^{13} [C(4)]^2 (NC_7)^4}{\mu_0^4} t^{3} \mathbb{E} \int_0^{t} \lvert \bH_n \rvert^{8}\, \d s.
\end{align*}
Plugging the previous estimates into the RHS of \eqref{eq5.47}, we deduce that for every $n \in \mathbb{N}$,
\begin{equation*}
\begin{aligned}
& \frac12 \mathbb{E}\sup_{s \in [0,t]}[\mathcal{E}_{tot}(\bu_n(s),\bw_n(s),\bM_n(s),\bH_n(s))]^4 
\\
& + \left[(2 \nu - 2 + c_1)^4 - \frac{3^{13} [C(4)]^2}{2} (2-c_1)^4  \right] \mathbb{E} \left(\int_{0}^{t} \lvert \nabla \bu_n \rvert^2 \d s\right)^4 
  \\
& + \left[(2\lambda_1 - 2 + c_2)^4 - \frac{3^{13} [C(4)]^2}{2} (2-c_2)^4 \right] \mathbb{E} \left(\int_{0}^{t} \lvert \nabla  \bw_n \rvert^2 \d s\right)^4 
    \\
& + 2^4 \lambda^4  \mathbb{E} \left(\int_{0}^{t} \lvert \curl\bM_n \rvert^2\, \d s\right)^4 + 2^4 \lambda^4 (\mu_0 +1)^4 \mathbb{E} \left(\int_0^{t} \lvert \diver\bM_n \rvert^2 \d s\right)^4 
\\
& + \frac{2^4}{\sigma^4} \mathbb{E} \left(\int_0^{t} \lvert \curl\bH_n \rvert^2\, \d s\right)^4 + \frac{2^4}{\tau^4} \mathbb{E} \left(\int_{0}^{t} \lvert \bM_n \rvert^2\, \d s\right)^4
+ \frac{2^4 \mu_0^4 \chi_0^4}{\tau^4} \mathbb{E} \left(\int_0^{t} \lvert \bH_n \rvert^2\, \d s\right)^4 
\\
& + 2^4 (\lambda_1 + \lambda_2)^4 \mathbb{E} \left(\int_{0}^{t} \lvert \diver\bw_n \rvert^2\, \d s\right)^4 + 2^4 \alpha^4 \mathbb{E} \left(\int_{0}^{t} \lvert \curl\bu_n - 2\bw_n \rvert^2\, \d s\right)^4            
   \\
& \leq 9^3 [\mathcal{E}_{tot}(\bu_0,\bw_0,\bM_0,\bH_0)]^4 + \left[\frac{2^{3}}{\sigma^{4}} + 3^{3} (2-c_4)^4 C_0^4 \mu_0^{-4} \right] \mathbb{E} \left(\int_0^{t} \lvert \curl\bH_n(s) \rvert^2\, \d s \right)^4  
\\
& + \left[\frac{9^6 \mu_0^8 \lambda^8\sigma^4}{32} + 3^{9} (2-c_3)^4 C_0^4 (\mu_0 + 1)^4 +  3^{16} (2-c_3)^4 C_0^4 \mu_0^4 [C(4)]^2\right] \mathbb{E} \left(\int_0^{t} \lvert \curl \bM_n(s) \rvert^2\, \d s \right)^4 
\\
& + \left[\frac{2^{3}}{\sigma^{4}} + 3^{9} (2-c_3)^4 C_0^4 (\mu_0 + 1)^4 + 3^{16} (2-c_3)^4 C_0^4 \mu_0^4 [C(4)]^2 \right] t^{3} \mathbb{E}  \int_0^{t} \lvert \bM_n(s) \rvert^{8}\, \d s  
\\
& + \left[3^{9} (2-c_3)^4 C_0^4 (\mu_0 + 1)^4 + 3^{3} (2-c_4)^4 C_0^4 \mu_0^{-4} + 3^{16} (2-c_3)^4 C_0^4 \mu_0^4 [C(4)]^2 \right] \mathbb{E} \left(\int_{0}^{t} \lvert \diver\bM_n(s) \rvert^{2}\, \d s\right)^4 
\end{aligned}
\end{equation*}
\begin{equation*}
\begin{aligned}
& + \left[ 3^{3} (2-c_4)^4 C_0^4 \mu_0^{-4} + \frac{3^{13} [C(4)]^2 (NC_7)^4}{\mu_0^4} + \frac{9^6 (\mu_0 + \chi_0)^8 \tau^{-8}}{32} \right] t^{3} \mathbb{E} \int_0^{t} \lvert \bH_n \rvert^{8}\, \d s.
\end{aligned}
\end{equation*}
From this previous inequality, we infer that for every $n \in \mathbb{N}$ and for all $t \in [0,T]$,
\begin{equation}\label{eq5.58-a}
\begin{aligned}
& \frac12 \mathbb{E}\sup_{s \in [0,t]}[\mathcal{E}_{tot}(\bu_n(s),\bw_n(s),\bM_n(s),\bH_n(s))]^4 
\\
& + \left[(2 \nu - 2 + c_1)^4 - 2\cdot 3^{16} [C(4)]^2 (2-c_1)^4  \right] \mathbb{E} \left(\int_{0}^{t} \lvert \nabla \bu_n \rvert^2 \d s\right)^4 
  \\
& + \left[(2\lambda_1 - 2 + c_2)^4 - 2\cdot 3^{16} [C(4)]^2 (2-c_2)^4 \right] \mathbb{E} \left(\int_{0}^{t} \lvert \nabla  \bw_n \rvert^2 \d s\right)^4 
    \\
& + \left[2^4 \lambda^4 - 2\cdot 3^{16} [C(4)]^2 \left(\mu_0^2 \lambda^2\sigma + (2-c_3) C_0 (\mu_0+1)\right)^4 \right] \mathbb{E} \left(\int_{0}^{t} \lvert \curl\bM_n \rvert^2\, \d s\right)^4 
\\
& + \left[2^4 \lambda^4 (\mu_0 +1)^4 - 2 \cdot 3^{16} [C(4)]^2 \left((2-c_3) C_0 (\mu_0 + 1) + (2-c_4) C_0 \mu_0^{-1} \right)^4 \right] \mathbb{E} \left(\int_0^{t} \lvert \diver\bM_n \rvert^2 \d s\right)^4 
\\
& + \frac{1}{2} \left[\frac{2^4}{\sigma^4} - 2 \cdot 3^{16} [C(4)]^2 (2-c_4)^4 C_0^4 \mu_0^{-4} \right] \mathbb{E} \left(\int_0^{t} \lvert \curl\bH_n \rvert^2\, \d s\right)^4 
\\
& + \frac{2^4}{\tau^4} \mathbb{E} \left(\int_{0}^{t} \lvert \bM_n \rvert^2\, \d s\right)^4
+ \frac{2^4 \mu_0^4 \chi_0^4}{\tau^4} \mathbb{E} \left(\int_0^{t} \lvert \bH_n \rvert^2\, \d s\right)^4 
\\
& + 2^4 (\lambda_1 + \lambda_2)^4 \mathbb{E} \left(\int_{0}^{t} \lvert \diver\bw_n \rvert^2\, \d s\right)^4 + 2^4 \alpha^4 \mathbb{E} \left(\int_{0}^{t} \lvert \curl\bu_n - 2\bw_n \rvert^2\, \d s\right)^4            
   \\
& \leq 9^3 [\mathcal{E}_{tot}(\bu_0,\bw_0,\bM_0,\bH_0)]^4  + 2 \kappa(t,4), \; \; t \in [0,T],
\end{aligned}
\end{equation}
having set
\begin{equation*}
\begin{aligned}
\kappa(t,4) \coloneq
&\left[\frac{2^{3}}{\sigma^{4}} +  3^{9} (2-c_3)^4 C_0^4 (\mu_0 + 1)^4 + 3^{16} (2-c_3)^4 C_0^4 \mu_0^4 [C(4)]^2 \right] t^{3} \\
& + \left[ 3^{3} (2-c_4)^4 C_0^4 \mu_0^{-4} + \frac{3^{13} [C(4)]^2 (NC_7)^4}{\mu_0^4} + \frac{9^6 (\mu_0 + \chi_0)^8 \tau^{-8}}{32} \right] \mu_0^{-4} t^{3}, \; \; t \in [0,T].
\end{aligned}
\end{equation*}
Notice that in view of the assumptions of Theorem \ref{theo 1}, we hae
\begin{align*}
(2 \nu - 2 + c_1)^4 - 2\cdot 3^{16} [C(4)]^2 (2-c_1)^4&>0,  
\\
(2\lambda_1 - 2 + c_2)^4 - 2\cdot 3^{16} [C(4)]^2 (2-c_2)^4&>0,  
\\
2^4 \lambda^4 - 2\cdot 3^{16} [C(4)]^2 (\mu_0^2 \lambda^2\sigma + (2-c_3) C_0 (\mu_0+1))^4&>0,  
  \\
2^4 \lambda^4 (\mu_0 +1)^4 - 2 \cdot 3^{16} [C(4)]^2 ((2-c_3) C_0 (\mu_0 + 1) + (2-c_4) C_0 \mu_0^{-1})^4&>0, 
    \\
\frac{2^4}{\sigma^4} - 2 \cdot 3^{16} [C(4)]^2 (2-c_4)^4 C_0^4 \mu_0^{-4}&>0.  
\end{align*}
Therefore, \eqref{eq5.42} follows directly from \eqref{eq5.58-a} through an application of the Gronwall lemma.
\dela{
\begin{align*}
& \mathbb{E}\sup_{s \in [0,t]}[\mathcal{E}_{tot}(\bu_n(s),\bw_n(s),\bM_n(s),\bH_n(s))]^p  + 2^p \lambda^p \mathbb{E} \left(\int_{0}^{t} |\curl\bM_n|^2 \d s\right)^p 
\\
& + 2^p \lambda^p (\mu_0 +1)^p \mathbb{E} \left(\int_0^{t} |\diver\bM_n|^2 \d s\right)^p + \frac{2^{p-1}}{\sigma^p} \mathbb{E} \left(\int_0^{t} |\curl \bH_n|^2 \, \d s\right)^p 
    \\
& + [(2 \nu - 2 + c_1)^p - (2-c_1)^p C(p)] \mathbb{E} \left( \int_{0}^{t} |\nabla \bu_n|^2 \d s\right)^p + \frac{2^{p-1}}{\tau^p} \mathbb{E} \left(\int_{0}^{t}  |\bM_n|^2 \d s\right)^p
	\\
& + [(2\lambda_1 - 2 + c_2)^p - (2-c_2)^p C(p)] \mathbb{E} \left(\int_{0}^{t} |\nabla  \bw_n|^2 \d s\right)^p + \frac{2^p \mu_0^p \chi_0^p}{\tau^p} \mathbb{E} \left(\int_0^{t} |\bH_n|^2 \d s\right)^p 
\\
& + 2^p (\lambda_1 + \lambda_2)^p \mathbb{E} \left(\int_{0}^{t} |\diver\bw_n|^2 \, \d s\right)^p + 2^p \alpha^p \mathbb{E} \left(\int_{0}^{t} |\curl\bu_n - 2\bw_n|^2 \, \d s\right)^p
    \\
&\leq C(p) [\mathcal{E}_{tot}(\bu_0,\bw_0,\bM_0,\bH_0)]^p + \frac{(2 - c_4)^p C_0^p}{2\mu_0^p} C(p) \mathbb{E} \left(\int_0^{t} |\curl\bH_n|^2 \d s \right)^p 
\\
&\quad + [\sigma^p \mu_0^{2p} \lambda^{2p}  + (2-c_3)^p C_0^p (\mu_0 + 1)^p + (2-c_3)^p C_0^p \mu_0^p] C(p) \mathbb{E} \left(\int_0^{t} |\curl \bM_n|^2 \d s \right)^p 
   \\
&\quad + [(2-c_3)^p C_0^p (\mu_0 + 1)^p + (2-c_3)^p C_0^p \mu_0^p  + (2 - c_4)^p C_0^p \mu_0^{-p}] C(p) \mathbb{E} \left(\int_{0}^{t} |\diver \bM_n|^{2} \d s\right)^p
      \\
&\quad + \frac{1}{6} \mathbb{E} \sup_{s \in [0,t]} |\bu_n(s)|^{2p} + \frac{1}{6} \mathbb{E} \sup_{s \in [0,t]} |\bw_n(s)|^{2p} +  \frac{\mu_0^p}{6} \mathbb{E} \sup_{s\in[0,t]} |\bH_n(s)|^{2p} 
           \\
&\quad + \left[(2-c_3)^p C_0^p (\mu_0 + 1)^p  + (2-c_3)^p C_0^p \mu_0^p \right] t^{p-1} C(p) \mathbb{E} \int_0^{t} |\bM_n|^{2p} \d s
               \\
&\quad + [(2 - c_4)^p C_0^p \mu_0^{-p} + (\mu_0 + \chi_0)^{2p} \tau^{-p} + (NC_7)^p \mu_0^{-p}]  t^{p-1} C(p) \mathbb{E} \int_0^{t} |\bH_n|^{2p} \d s.   	  	 
\end{align*}
From this previous inequality, we deduce that
		\begin{align}\label{eq5.58}
			& \frac{1}{2} \mathbb{E}\sup_{s \in [0,t]}[\mathcal{E}_{tot}(\bu_n(s),\bw_n(s),\bM_n(s),\bH_n(s))]^p 
			\notag \\
			& + [(2 \nu - 2 + c_1)^p - (2-c_1)^p C(p)] \mathbb{E} \left( \int_{0}^{t} |\nabla \bu_n^\lambda|^2 \d s\right)^p 
			\notag  \\
			& + [(2\lambda_1 - 2 + c_2)^p - (2-c_2)^p C(p)] \mathbb{E} \left(\int_{0}^{t} |\nabla  \bw_n|^2 \d s\right)^p 
			\notag  \\
			& + [2^p \lambda^p - (\sigma \mu_0^2 \lambda^2  + (2-c_3) C_0 (\mu_0 + 1))^p C(p)] \mathbb{E} \left(\int_{0}^{t} |\curl\bM_n|^2 \, \d s\right)^p 
			\\
			& + \left[2^p \lambda^p (\mu_0 +1)^p - \left((2-c_3) C_0 (\mu_0 + 1) + (2 - c_4) C_0 \mu_0^{-1} \right)^p C(p)\right] \mathbb{E} \left(\int_0^{t} |\diver\bM_n|^2 \d s\right)^p 
			\notag  \\
			& + \frac{1}{2} [2^p \sigma^{-p} - (2 - c_4)^p C_0^p \mu_0^{-p} C(p) 
			] \mathbb{E} \left(\int_0^{t} |\curl\bH_n|^2 \, \d s\right)^p + 2^{p-1} \tau^{-p} \mathbb{E} \left(\int_{0}^{t}  |\bM_n|^2 \d s \right)^p 
			\notag  \\
			&+ 2^p \mu_0^p \chi_0^p \tau^{-p} \mathbb{E} \left(\int_0^{t} |\bH_n|^2 \d s\right)^p + 2^p (\lambda_1 + \lambda_2)^p \mathbb{E} \left(\int_{0}^{t} |\diver\bw_n|^2 \, \d s\right)^p
			\notag  \\
			&+ 2^p \alpha^p \mathbb{E} \left(\int_{0}^{t} |\curl\bu_n - 2\bw_n|^2 \, \d s\right)^p
			\notag  \\
			&\leq C(p) [\mathcal{E}_{tot}(\bu_0,\bw_0,\bM_0,\bH_0)]^p  + 4 \kappa(p) \mathbb{E} \int_0^{t} \sup_{\tau\in [0,s]} [\mathcal{E}_{tot}(\bu_n(\tau),\bw_n(\tau),\bM_n(\tau),\bH_n(\tau))]^p \d s, \notag 	  	 
		\end{align}
for all $ t \in[0,T]$, having set
		\begin{equation}
			\begin{aligned}
				\kappa(p)
				&=	\left[(2-c_3)^p C_0^p (\mu_0 + 1)^p  + (2-c_3)^p C_0^p \mu_0^p + (2 - c_4)^p C_0^p (\mu_0^p)^{-2} \right. \\
				&\qquad \left.  + (\mu_0 + \chi_0)^{2p} \tau^{-p} \mu_0^{-p} + (NC_7)^p (\mu_0^p)^{-2} \right] t^{p-1} C(p).
			\end{aligned}
		\end{equation} 
We note that in \eqref{eq5.58}, we used the fact that
\begin{align*}
	& (2-c_3)^p C_0^p \mu_0^p \leq (2-c_3)^p C_0^p (\mu_0+1)^p, \\
	& (2-c_3)^p C_0^p (\mu_0 + 1)^p + (2 - c_4)^p C_0^p \mu_0^{-p} \leq [(2-c_3) C_0 (\mu_0 + 1) + (2 - c_4) C_0 \mu_0^{-1}]^p, \\
	& \sigma^p \mu_0^{2p} \lambda^{2p}  + (2-c_3)^p C_0^p (\mu_0 + 1)^p \leq [\sigma \mu_0^2 \lambda^2  + (2-c_3) C_0 (\mu_0 + 1)]^p, \quad \forall p \geq 2.
\end{align*}
Notice that under the assumptions of Theorem \ref{theo 1}, one has: $[(2\nu - 2 + c_1)^p - (2-c_1)^p C(p)]>0$, $[(2\lambda_1 - 2 + c_1)^p - (2-c_2)^p C(p) ]>0$, $2^p \lambda^p - (\sigma \mu_0^2 \lambda^2  + (2-c_3) C_0 (\mu_0 + 1))^p C(p)>0$,  
$2^p \lambda^p (\mu_0 +1)^p - \left((2-c_3) C_0 (\mu_0 + 1) + (2 - c_4) C_0 \mu_0^{-1} \right)^p C(p)>0$, and 
$\frac{2^p}{\sigma^p} - (2 - c_4)^p C_0^p \mu_0^{-p}  C(p) >0$. Hence, \eqref{eq5.42} follows directly from \eqref{eq5.58} through an application of the Gronwall lemma. 
}
\end{proof}
\subsection*{Step 3: Translation estimates, tightness property of Galerkin solutions and Construction of new probability space}
Since we  are not able to estimate time derivatives of $\bu_n,\, \bw_n,\, \bM_n,\, \bH_n$ and $\bB_n$, and hence to use the stochastic version of the Aubin-Lions compactness theorem, we will estimate translation differences of $\bu_n,\, \bw_n,\, \bM_n,\, \bH_n$ and $\bB_n$, namely $\bu_n(t+\theta) -\bu_n(t)$, $\bw_n(t+\theta) -\bw_n(t)$, $\bM_n(t+\theta) -\bM_n(t)$, $\bH_n(t+\theta) -\bH_n(t)$ and $\bB_n(t+\theta) -\bB_n(t)$. This type of estimates has been used in \cite{Aris,Motyl1,Motyl}. For this purpose, we  state and prove the following lemma and its corollary.
\begin{lemma}\label{lem5}	
Let the assumptions of Lemma \ref{eqt5.29-c} be satisfied. Then, there exists a positive constant $C_9$ independent of $n$ such that
			\begin{equation}\label{eq5.59}
				\begin{aligned}
					& \mathbb{E} \int_0^t (\|A\bu_n(s)\|_{V'}^2 + \|\mathcal{P}_n^1 B_0(\bu_n(s),\bu_n(s))\|_{V'}^{4/3} + \|\mathcal{P}_n^1 M_0(\bM_n(s),\bH_n(s))\|_{V'}^{8/7}) \, \d s \leq C_9, \\
					& \mathbb{E} \int_0^t (\|\mathcal{P}_n^1 R_1(\bH_n(s), \bH_n(s))\|^{8/7}_{V'} +  \|\mathcal{P}_n^1 R_0(\bu_n(s), \bw_n(s))\|_{V'}^2) \, \d s \leq C_9, \\
					& \mathbb{E} \int_0^t (\|A_1 \bw_n \|_{\mathbb{H}^{-1}(\mathcal{O})}^2 + \|\mathcal{P}_n^2 B_1(\bu_n,\bw_n) \|_{\mathbb{H}^{-1}}^{4/3} +\|\mathcal{P}_n^2 R_5(\bw_n)\|_{\mathbb{H}^{-1}}^2) \, \d s \leq C_9, \\
					& \mathbb{E} \int_0^t (\|\mathcal{P}_n^2 R_2(\bu_n(s),\bw_n(s))\|_{\mathbb{H}^{-1}}^2 + \|\mathcal{P}_n^2 R_3(\bM_n(s),\bH_n(s))\|_{\mathbb{H}^{-1}}^{4/3}) \, \d s \leq C_9,\\
					& \mathbb{E} \int_0^t (\|\mathcal{P}_n^3  R_6(\bM_n(s))\|_{V'_1}^2 + \|\mathcal{P}_n^3  R_5(\bM_n(s))  \|_{V'_1}^2)  \, \d s \leq C_9, \\
					&\mathbb{E} \int_0^t  (\|\mathcal{P}_n^3 B_2(\bu_n(s),\bM_n(s))\|^{4/3}_{V'_1} + \|\mathcal{P}_n^3 R_3(\bw_n(s),\bM_n(s))\|_{V'_1}^2) \, \d s, \\
					& \mathbb{E} \int_0^t (\|\mathcal{P}_n^3  R_6(\bH_n(s))\|_{V'_2}^2 + \|\mathcal{P}_n^3 \tilde{M}_2(\bu_n(s),\bB_n(s))\|^{4/3}_{V'_2}) \, \d s \leq C_9, \;\; t \in [0,T].
				\end{aligned}
		\end{equation} 
	\end{lemma}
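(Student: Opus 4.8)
The plan is to bound each of the quantities appearing in \eqref{eq5.59} separately, by first invoking the continuity estimate proved for the corresponding operator in Section \ref{sect2}, and then closing the bound with H\"older's inequality in time, H\"older's inequality on $\Omega$, and the a priori estimates of Lemmata \ref{Lem-2} and \ref{lem6.4}. Throughout, the Galerkin projections are handled using that they are bounded on the relevant dual spaces uniformly in $n$: $\mathcal{P}_n^1$ and $\mathcal{P}_n^2$ commute with $A^{1/2}$, resp. $A_1^{1/2}$, hence are contractions on $V$, resp. $\mathbb{H}_0^1$, and thus on $V'$, resp. $\mathbb{H}^{-1}$; while for $\mathcal{P}_n^3$ the uniform boundedness on $V_1$ (and hence on $V_1'$) follows from the structure of the basis in Lemma \ref{lem4}. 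The linear terms, and those dominated by a linear quantity, are then immediate: on the Galerkin spaces $\langle A\bu_n,\bv\rangle_{V',V}=(\nabla\bu_n,\nabla\bv)$ and $\langle A_1\bw_n,\psi\rangle_{\mathbb{H}^{-1},\mathbb{H}_0^1}=(\nabla\bw_n,\nabla\psi)$, so $\|A\bu_n\|_{V'}\le|\nabla\bu_n|$ and $\|A_1\bw_n\|_{\mathbb{H}^{-1}}\le|\nabla\bw_n|$, and combining \eqref{eq2.15}, \eqref{Eq:Ineq-R2}, \eqref{Eq:Ineq-R5} and \eqref{Eq:Ineq-R6} with $|\curl\bu_n|\le|\nabla\bu_n|$ bounds $R_0(\bu_n,\bw_n)$, $R_2(\bu_n,\bw_n)$, $R_5(\bw_n)$, $R_5(\bM_n)$, $R_6(\bM_n)$ and $R_6(\bH_n)$ pointwise in time by a constant times $|\nabla\bu_n|+|\nabla\bw_n|+|\diver\bw_n|+|\diver\bM_n|+|\curl\bM_n|+|\curl\bH_n|+|\bw_n|$, the square of which has finite space-time expectation by \eqref{eqt5.29-c}.

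For the bilinear terms we use \eqref{eq2.6a} (for $B_0,B_1,B_2$), \eqref{eq2.3} (for $M_0$), \eqref{eq2.9} (for $\tilde M_2$), \eqref{Eq:Ineq-R3} (for the $R_3$-terms) and the bound on $R_1$ from Lemma \ref{Lem:Bilinear-R1}, together with the three-dimensional Gagliardo--Nirenberg interpolation $\|\bv\|_{\mathbb{L}^4}\le C|\bv|^{1/4}\|\bv\|_{\mathbb{H}^1}^{3/4}$ and \eqref{eq2.1} (to replace $\|\cdot\|_{\mathbb{H}^1}$ by $\|\cdot\|_{V_1}$ on $\bM_n$, $\bH_n$). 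The exponents $4/3$ and $8/7$ are dictated precisely by the homogeneity of these estimates. For instance, \eqref{eq2.6a} gives $\|\mathcal{P}_n^1 B_0(\bu_n,\bu_n)\|_{V'}^{4/3}\le C|\bu_n|^{2/3}|\nabla\bu_n|^{2}$, so $\mathbb{E}\int_0^t\|\mathcal{P}_n^1 B_0(\bu_n,\bu_n)\|_{V'}^{4/3}\,ds\le C\big(\mathbb{E}\sup_{[0,T]}|\bu_n|^2\big)^{1/3}\big(\mathbb{E}(\int_0^T|\nabla\bu_n|^2\,ds)^{3/2}\big)^{2/3}$, finite by \eqref{eqt5.29-c} and \eqref{eq5.42}; and \eqref{eq2.3} gives $\|\mathcal{P}_n^1 M_0(\bM_n,\bH_n)\|_{V'}^{8/7}\le C|\bM_n|^{2/7}\|\bM_n\|_{V_1}^{6/7}\|\bH_n\|_{V_1}^{8/7}\le C|\bM_n|^{2/7}\big(\|\bM_n\|_{V_1}^2+\|\bH_n\|_{V_1}^2\big)$ by Young's inequality (since $\tfrac67+\tfrac87=2$), whence, after a H\"older split on $\Omega$ with exponents $7$ and $7/6$, the bound follows from \eqref{eqt5.29-c}, \eqref{eqt5.43} and Jensen's inequality. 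The terms $B_1(\bu_n,\bw_n)$, $B_2(\bu_n,\bM_n)$, $R_1(\bH_n,\bH_n)$ and $R_3(\bM_n,\bH_n)$ are treated in exactly the same way, using in addition $\|\bH_n\|_{\mathbb{L}^4}\le C\|\bH_n\|_{V_1}$; for $R_3(\bw_n,\bM_n)$, where the exponent $2$ is required, one places the test function in $\mathbb{L}^6$ and uses $\mathbb{H}^1\hookrightarrow\mathbb{L}^6$ together with $\|\bw_n\|_{\mathbb{L}^3}\le C|\bw_n|^{1/2}|\nabla\bw_n|^{1/2}$; and for $\tilde M_2(\bu_n,\bB_n)$ one uses $\diver\bB_n=0$ and $\|\bB_n\|_{V_2}\le\mu_0(\|\bM_n\|_{V_1}+\|\bH_n\|_{V_1})$ in \eqref{eq2.9}. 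Collecting all the bounds with a common constant $C_9$ yields \eqref{eq5.59}.

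I do not expect any single step to be genuinely hard: the substance of the a priori analysis already lies in Lemmata \ref{Lem-2} and \ref{lem6.4}, and the present lemma is in essence a verification that the homogeneity of the operator estimates matches the integrability furnished by those moments. The two points requiring the most care are: (i) the term $M_0(\bM_n,\bH_n)$, where the exponent $8/7$ is the largest one for which the estimate closes, so that after the time- and $\Omega$-H\"older steps one is left needing exactly the moment of order $7/6$ of $\int_0^T(\|\bM_n\|_{V_1}^2+\|\bH_n\|_{V_1}^2)\,ds$, which is covered by \eqref{eqt5.43} via Jensen's inequality (and similarly for $R_1(\bH_n,\bH_n)$); and (ii) the term $\tilde M_2(\bu_n,\bB_n)$, because $\bB_n=\mu_0(\bM_n+\bH_n)$ forces one to control $\|\bB_n\|_{V_2}$ simultaneously through $\|\bM_n\|_{V_1}$ and $\|\bH_n\|_{V_1}$. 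A subordinate technical point is the $n$-uniform boundedness of $\mathcal{P}_n^3$ on $V_1$, which rests on the special structure of the basis supplied by Lemma \ref{lem4}.
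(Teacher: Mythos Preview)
Your proposal is correct and follows essentially the same strategy as the paper: use the operator bounds of Section~\ref{sect2} (\eqref{eq2.6a}, \eqref{eq2.3}, \eqref{eq2.9}, \eqref{eq2.15}, \eqref{Eq:Ineq-R2}--\eqref{Eq:Ineq-R6}), raise to the stated exponent, apply H\"older in $t$ and then in $\omega$, and close with the moments from Lemmata~\ref{Lem-2} and~\ref{lem6.4}. The only cosmetic differences are in the H\"older splits you choose---for $M_0$ you pass through Young's inequality to reduce to a single factor $\|\bM_n\|_{V_1}^2+\|\bH_n\|_{V_1}^2$ before the $\Omega$-H\"older step, whereas the paper keeps the three factors $|\bM_n|^{2/7}\|\bM_n\|_{V_1}^{6/7}\|\bH_n\|_{V_1}^{8/7}$ and splits with exponents $(7,\tfrac73,\tfrac74)$; and for $\tilde M_2$ the paper spells out a four-factor H\"older split with exponents $(4,4,4,4)$---but both routes land on the same moment requirements. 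Your handling of $R_3(\bw_n,\bM_n)$ via the $\mathbb{L}^3$--$\mathbb{L}^2$--$\mathbb{L}^6$ pairing is in fact more explicit than the paper's treatment of that term, and your remark on the $n$-uniform boundedness of $\mathcal{P}_n^3$ on $V_1'$ flags a point the paper uses without comment.
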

\begin{proof}
Observe that there exists a positive constant $C>0$ such that
			\begin{align*}
				\|\bB_n\|_{V_2}^2 
				&\leq C [\bB_n,\bB_n]= C(|\bB_n|^2 + |\curl\bB_n|^2 + |\diver\bB_n|^2)= C(|\bB_n|^2 + |\curl\bB_n|^2) \\
				&\leq 2C\mu_0^2 (|\bM_n|^2 + |\bH_n|^2) + 2C\mu_0^2(|\curl\bM_n|^2 + |\curl\bH_n|^2),
			\end{align*}
where we used Cauchy-Schwarz's and Young's inequalities and the fact that $\diver\bB_n = 0$ in $Q_T$. Thus, in light of \eqref{eq2.9} and \eqref{eq5.42}, there exists $c_8>0$ such that for all $n\in \mathbb{N}$, 
\begin{align}\label{eq5.64}
& \mathbb{E} \int_0^t \|\mathcal{P}_n^3 \tilde{M}_2(\bu_n(s),\bB_n(s))\|_{V'_2}^\frac{4}{3} \, \d s \notag 
\leq  C_2^\frac{4}{7} \mathbb{E} \int_0^t |\bB_n(s)|^\frac{1}{3} \|\bB_n(s)\|_{V_2} \|\bu_n(s)\|^\frac{1}{3} |\nabla \bu_n(s)| \, \d s \notag 
\\
&\leq C_2^\frac{4}{7} \mathbb{E} \left[\sup_{ s\in [0,t]} |\bB_n(s)|^\frac{1}{3} \sup_{s\in [0,t]} |\bu_n(s)|^\frac{1}{3} \left(\int_0^t \|\bB_n(s)\|_{V_2}^2 \, \d s \right)^\frac{1}{2} \left(\int_0^t |\nabla \bu_n(s)|^2 \, \d s\right)^\frac{1}{2} \right] 
\\
&\leq C_2^\frac{4}{7} [\mathbb{E} \sup_{s\in [0,t]} |\bB_n(s)|^\frac{4}{3}]^\frac{1}{4} [\mathbb{E} \sup_{ s\in [0,t]} |\bu_n(s)|^\frac{4}{3}]^\frac{1}{4} \left[\mathbb{E} \left(\int_0^t \|\bB_n(s)\|_{V_2}^2 \d s \right)^2 \right]^\frac{1}{4} \left[\mathbb{E} \left(\int_0^t |\nabla \bu_n(s)|^2 \d s \right)^2 \right]^\frac{1}{4} \notag   
\\
& \leq c_8. \notag 
\end{align}
Thanks to \eqref{eq2.6a}, \eqref{eq2.11}, \eqref{Eq:Ineq-R3}, and Lemma \ref{lem6.4}, along with an argument similar to the proof of \eqref{eq5.64}, we show that there exists $c_8>0$ such that for all $n\in \mathbb{N}$ 
				\begin{align*}
					& \mathbb{E} \int_0^t \biggl(\|\mathcal{P}_n^1 B_0(\bu_n(s),\bu_n(s))\|_{V'}^{\frac43} +   \|\mathcal{P}_n^2 B_1(\bu_n(s),\bw_n(s))\|_{\mathbb{H}^{-1}}^{\frac43}+ \|\mathcal{P}_n^3 B_2(\bu_n(s),\bM_n(s))\|_{V_1'}^{\frac43}\biggr) \d s \le c_8,\\
					&\mathbb{E} \int_0^t \biggl(\|\mathcal{P}_n^3 R_3(\bu_n(s),\bw_n(s))\|_{V_1'}^{\frac43}+   \|\mathcal{P}_n^2 R_3(\bM_n(s),\bH_n(s))\|_{\mathbb{H}^{-1}}^\frac43\biggr)\d s \leq c_8. 
			\end{align*} 
From \eqref{eq2.3}, \eqref{eq5.42}, and \eqref{eqt5.43}, along with the embedding of $\mathbb{H}^1$ in $\mathbb{L}^4$, we conclude that there exists $c_8>0$ such that for all $n\in \mathbb{N}$,
			\begin{align*}
				&\mathbb{E} \int_0^t \|\mathcal{P}_n^1 M_0(\bM_n(s),\bH_n(s))\|_{V'}^\frac{8}{7}\, \d s 
				\leq C \mathbb{E} \int_0^t \|\bM_n(s)\|^\frac{2}{7} \|\bM_n(s)\|_{V_1}^\frac{6}{7} \|\bH_n\|_{V_1}^\frac{8}{7} \, \d s
                      \\
				&\leq C \mathbb{E} \left[\sup_{s \in [0,t]} |\bM_n(s)|^\frac{2}{7} \left(\int_0^t \|\bM_n(s)\|_{V_1}^2 \, \d s \right)^\frac{3}{7} \left(\int_0^t \|\bH_n(s)\|_{V_1}^2 \, \d s \right)^\frac{4}{7} \right] \\
				& \leq C [\mathbb{E} \sup_{s \in [0,t]} |\bM_n(s)|^2]^\frac{1}{7} \left(\mathbb{E} \int_0^t \|\bM_n(s)\|_{V_1}^2 \, \d s \right)^\frac{3}{7} \left[\mathbb{E} \left(\int_0^t \|\bH_n(s)\|_{V_1}^2 \, \d s \right)^\frac{4}{3} \right]^\frac{3}{7}  \\
				& \leq C [\mathbb{E} \sup_{s \in [0,t]} |\bM_n(s)|^2]^\frac{1}{7} \left(\mathbb{E} \int_0^t \|\bM_n(s)\|_{V_1}^2 \, \d s \right)^\frac{3}{7} \left[\mathbb{E} \left(\int_0^t \|\bH_n(s)\|_{V_1}^2 \, \d s \right)^2 \right]^\frac{2}{7} \leq c_8.
			\end{align*}
In a similar manner, we have
			\begin{equation*}
				\begin{aligned}
					&\mathbb{E} \int_0^t \|\mathcal{P}_n^1 R_1(\bH_n(s), \bH_n(s))\|_{V'}^\frac{8}{7}\, \d s
					\leq C \mathbb{E} \int_0^t |\bH_n(s)|^\frac{2}{7} \|\bH_n(s)\|_{V_1}^\frac{6}{7} |\curl\bH_n(s)|^\frac{8}{7}\, \d s \\
					& \leq C [\mathbb{E} \sup_{s \in [0,t]} |\bH_n(s)|^2]^\frac{1}{7} \left(\mathbb{E} \int_0^t \|\bH_n(s)\|_{V_1}^2\, \d s \right)^\frac{3}{7} \left[\mathbb{E} \left(\int_0^t |\curl\bH_n(s)|^2 \, \d s \right)^2 \right]^\frac{2}{7} \leq c_8.
				\end{aligned}
			\end{equation*}
By Lemma \ref{lem:Bilinear-R0}, \eqref{Eq:Ineq-R2}, \eqref{Eq:Ineq-R5}, and \eqref{Eq:Ineq-R6}, the boundedness of the linear maps $A$ and $A_1$, and Lemma \ref{lem6.4}, we can easily show that there exists $c_8 > 0$ such that for all $n \in \mathbb{N}$
			\begin{align*}
				&\mathbb{E} \int_0^t \biggl(\lVert A\bu_n\rVert^2_{V^\prime}+\|\mathcal{P}_n^1 R_0(\bu_n, \bw_n)\|_{V'}^2+ \lVert A\bw_n\rVert^2_{\mathbb{H}^{-1}(\mathcal{O})}+  \|\mathcal{P}_n^2 R_5(\bw_n)\|_{\mathbb{H}^{-1}}^2\biggr) \d s \leq c_8, \\
				&\mathbb{E} \int_0^t \biggl(\|\mathcal{P}_n^2 R_2(\bu_n,\bw_n)\|_{\mathbb{H}^{-1}}^2+\|\mathcal{P}_n^3  R_6(\bH_n)\|_{V'_2}^2 + \|\mathcal{P}_n^3  R_6(\bM_n)\|_{V'_1}^2 + \|\mathcal{P}_n^3  R_5(\bM_n)\|_{V'_1}^2 \biggl) \d s \leq c_8.
				\end{align*} 
By collecting all the above estimates, we can easily complete the proof of the lemma. 
\end{proof}
\begin{cor}
There exists a constant $C>0$ such that for every $(\tau_{n})_{n\in\mathbb{N}}$ of $\mathbb{F}$-stoppings times with $\tau_{n}\leq T$, and for every $n\in\mathbb{N}$ and $\theta\geq 0$, we have  
				\begin{equation}\label{Eq:Aldous-un}
					\begin{aligned}
						&\mathbb{E} \|\bu_{n}(\tau_{n}+\theta)-\bu_{n}(\tau_{n}) \|_{V^\prime}^{\frac87}\leq C(\theta^{\frac{4}{7}} + \theta^{\frac{2}{7}} +  \theta^{\frac17}),
						\\
						&\mathbb{E} \|\bw_{n}(\tau_{n}+\theta)-\bw_{n}(\tau_{n}) \|_{\mathbb{H}^{-1}(\mathcal{O})}^{\frac43}\leq C (\theta^{\frac23} + \theta^{\frac13}),
						\\
						&\mathbb{E} \|\bM_{n}(\tau_{n}+\theta)-\bM_{n}(\tau_{n}) \|_{V_2^\prime}^{\frac43}\leq C (\theta^{\frac23} + \theta^{\frac13}),
						\\
						&\mathbb{E} \|\bH_{n}(\tau_{n}+\theta)-\bH_{n}(\tau_{n}) \|_{V_1^\prime}^{\frac43}\leq C (\theta^{\frac23} + \theta^{\frac13}),
						\\
						&\mathbb{E} \|\bB_{n}(\tau_{n}+\theta)-\bB_{n}(\tau_{n}) \|_{V_1^\prime}^{\frac43}\leq C (\theta^{\frac23} + \theta^{\frac13}).
					\end{aligned}
					\end{equation}  
\end{cor}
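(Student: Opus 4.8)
The plan is to read off each increment directly from the Galerkin system \eqref{Eq:Galerkin-App} and split it into its finite‑variation (drift) part and its stochastic (It\^o) part, estimating the two separately on the random window $[\tau_n,\tau_n+\theta]$ (we may and do assume $\theta$ bounded, all processes extended constantly past $T$, which only makes the claim weaker for large $\theta$). From \eqref{eq4.2a},
\begin{align*}
\bu_n(\tau_n+\theta)-\bu_n(\tau_n)
&=-\int_{\tau_n}^{\tau_n+\theta}\!\bigl(\nu A\bu_n+\mathcal{P}_n^1[B_0(\bu_n,\bu_n)-\mu_0 M_0(\bM_n,\bH_n)-\mu_0 R_1(\bH_n,\bH_n)+\alpha R_0(\bu_n,\bw_n)]\bigr)\,\d s\\
&\quad+\int_{\tau_n}^{\tau_n+\theta}\!\mathcal{P}_n^1F_1(\bu_n(s))\,\d\beta^1(s),
\end{align*}
and analogously for $\bw_n$, $\bM_n$, $\bB_n$ from \eqref{eq4.2b}--\eqref{eq4.2d}. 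The increment of $\bH_n$ is not estimated directly: I would instead use $\bH_n=\mu_0^{-1}\bB_n-\bM_n$ and combine the bounds for $\bB_n$ and $\bM_n$, observing that since $\diver\bB_n=0$ the $V_1'$- and $V_2'$-norms of $\bB_n$ and of its increments coincide (testing against $\psi\in V_1$ reduces to its $V_2$-component).

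For a generic drift term $\int_{\tau_n}^{\tau_n+\theta}\mathcal{P}_n^iK(s)\,\d s$ which Lemma~\ref{lem5} controls in $L^q(0,T;X')$ in the mean (with $q\in\{2,\tfrac43,\tfrac87\}$ and $X'$ the relevant negative‑order space), H\"older's inequality in time gives $\Bigl\|\int_{\tau_n}^{\tau_n+\theta}\mathcal{P}_n^iK\,\d s\Bigr\|_{X'}\le\theta^{1-1/q}\Bigl(\int_0^T\|\mathcal{P}_n^iK(s)\|_{X'}^q\,\d s\Bigr)^{1/q}$; raising this to the power $r$ ($r=\tfrac87$ for the velocity equation, $r=\tfrac43$ for the other three), taking expectations and using Jensen together with Lemma~\ref{lem5} bounds it by $C\theta^{r(1-1/q)}$. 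Summing over the finitely many drift terms, the cases $q=2$, $q=\tfrac43$, $q=\tfrac87$ produce exactly the exponents $\tfrac47,\tfrac27,\tfrac17$ in the first line of \eqref{Eq:Aldous-un} and the exponents $\tfrac23,\tfrac13$ in the remaining lines; the zeroth‑order term $\tau^{-1}(\bM_n-\chi_0\bH_n)$ in the $\bM_n$-equation fits the same scheme via $\|\bM_n-\chi_0\bH_n\|_{V_2'}\le C(|\bM_n|+|\bH_n|)$ and Lemma~\ref{Lem-2}.

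For the stochastic parts I would first pass from the fractional moment to a second moment by Jensen, $\mathbb{E}\|\cdot\|_{X'}^{r}\le(\mathbb{E}\|\cdot\|_{X'}^{2})^{r/2}$, and then apply the It\^o isometry \eqref{eq3.1} to the progressively measurable integrand $\mathbf{1}_{(\tau_n,\tau_n+\theta]}\,\mathcal{P}_n^iF(\cdot)$, which legitimises the random limits of integration. The crucial point is that the Hilbert--Schmidt norms must be measured in the \emph{weak} spaces supplied by Proposition~\ref{propo-1}: using the extensions $F_1:H\to L_2(U_1,V')$, $F_2:\mathbb{L}^2\to L_2(U_1,\mathbb{H}^{-1})$, $G:\mathbb{L}^2\to L_2(U_1,V_1')$, $F_3:\mathbb{L}^2\to L_2(U_2,V_2')$ and the estimates \eqref{Eq3.10}, the integrand is dominated by $|\bu_n|^2$ (respectively $|\bw_n|^2$, $|\bM_n|^2$, $|\bH_n|^2$), so that
\[
\mathbb{E}\int_{\tau_n}^{\tau_n+\theta}\|\mathcal{P}_n^iF(\cdot)(s)\|_{L_2(\cdot,X')}^2\,\d s\le C\theta\,\mathbb{E}\sup_{s\in[0,T]}\bigl(|\bu_n(s)|^2+|\bw_n(s)|^2+|\bM_n(s)|^2+|\bH_n(s)|^2\bigr)\le C\theta
\]
by Lemma~\ref{lem6.4}. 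Hence each stochastic term contributes $C\theta^{r/2}$, i.e.\ $C\theta^{4/7}$ for $\bu_n$ and $C\theta^{2/3}$ otherwise, which is already present on the right‑hand side of \eqref{Eq:Aldous-un}; adding drift and stochastic contributions yields the five estimates.

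The hard part is precisely this last observation: a naive treatment of the stochastic integrals in $\mathbb{L}^2$ would only control $\int_{\tau_n}^{\tau_n+\theta}|\nabla(\cdot)|^2\,\d s$ and would produce \emph{no} positive power of $\theta$, since $\nabla\bu_n,\nabla\bw_n,\nabla\bM_n,\nabla\bH_n$ are only bounded in $L^2(0,T;\cdot)$ in the mean, not in $L^\infty$; it is essential to work in the negative‑order dual spaces where, by Proposition~\ref{propo-1}, the noise coefficients are bounded by the $\mathbb{L}^2$-norm of the solution, so that the elementary bound $\int_{\tau_n}^{\tau_n+\theta}|(\cdot)|^2\,\d s\le\theta\sup_{[0,T]}|(\cdot)|^2$ is available. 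Secondary technical points, to be verified along the way, are the uniform boundedness of the projections $\mathcal{P}_n^1,\mathcal{P}_n^2,\mathcal{P}_n^3$ on the relevant dual spaces (from the choice of the spectral bases and, for $\bB_n$, its divergence‑free structure) and the validity of the It\^o isometry with a stopping time in the limits of integration.
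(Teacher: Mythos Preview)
Your proposal is correct and follows essentially the same strategy as the paper: split each increment into its drift and stochastic parts, control the drift via H\"older in time together with the $L^q(0,T;X')$ bounds of Lemma~\ref{lem5}, and control the stochastic part by passing to the second moment and applying the It\^o isometry with the dual-space noise estimates \eqref{Eq3.10} plus the $\sup$-bound from Lemma~\ref{lem6.4}. Your write-up is in fact more explicit than the paper's (which only treats $\bu_n$ in detail and leaves the rest to the reader); in particular your handling of $\bH_n$ via $\bH_n=\mu_0^{-1}\bB_n-\bM_n$ and the orthogonal decomposition $V_1=V_2\oplus\mathcal{H}$ is a clean way to close that gap.
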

\begin{proof}
Note that there exists a constant $C>0$ such that for all $n\in \mathbb{N}$
				\begin{equation}\label{eq6.57}
					\begin{aligned}
						\lVert \bu_n(\tau_{n} + \theta) - \bu_n(\tau_{n}) \rVert_{V^\prime}^{\frac87} \le 
						C \biggl\lVert  \int_{\tau_{n}}^{\tau_{n} + \theta}\biggl( - \nu A \bu_n(s)-\mathcal{P}_n^1 B_0(\bu_n(s),\bu_n(s))-\alpha \mathcal{P}_n^1 R_0(\bu_n(s), \bw_n(s))  \,\biggr) \d s  \\
						\quad  + \mu_0  \int_{\tau_{n}}^{\tau_{n} + \theta}   [\mathcal{P}_n^1 M_0(\bM_n(s),\bH_n(s))  
						+ \mathcal{P}_n^1 R_1(\bH_n(s), \bH_n(s))] \, \d s \biggr\lVert_{V^\prime}^{\frac87}\\
						+C  \biggl\lVert \int_{\tau_{n}}^{\tau_{n} + \theta} \mathcal{P}_n^1 F_1(\bu_n(s)) \, \d \beta^1(s)\biggr\rVert^{\frac87}.
					\end{aligned}
				\end{equation}
By H\"older's inequality, the It\^o isometry, \eqref{Eq3.10}$_1$, and \eqref{eq5.42}, we see that there exists a constant $C>0$ such that for all $n\in \mathbb{N}$, we have 
				\begin{align*}
					&\mathbb{E} \left\|\int_{\tau_n}^{\tau_n + \theta} \mathcal{P}_n^1 F_1(\bu_n(s)) \, \d \beta^1(s)\right\|_{V'}^{\frac87}
					\leq \biggl[	\mathbb{E} \left\|\int_{\tau_n}^{\tau_n + \theta} \mathcal{P}_n^1 F_1(\bu_n(s)) \, \d \beta^1(s)\right\|_{V'}^2\biggr]^{\frac47} \\
					& \leq \biggl[\mathbb{E} \int_{\tau_n}^{\tau_n + \theta} \|\mathcal{P}_n^1 F_1(\bu_n(s))\|_{L_2(U_1,V')}^2 \, \d s \biggr]^{\frac47}
                    \leq \biggl[4C_1\mathbb{E} \int_{\tau_n}^{\tau_n + \theta} |\bu_n(s)|^2  \, \d s\biggr]^{\frac47} 
                    \\
					& \leq \biggl[ 4 C_1 \mathbb{E} \sup_{ s\in [0,T]} |\bu_n(s)|^2 \theta \biggr]^{\frac47} \leq C \theta^{\frac47}.
				\end{align*} 
Owing to H\"older's inequality and Lemma \ref{lem5}, we easily prove that there exists a constant $C>0$ such that for all $n\in \mathbb{N}$ 
	\begin{align*}
		&\mathbb{E} \biggl\lVert  \int_{\tau_{n}}^{\tau_{n} + \theta}\biggl( - \nu A \bu_n(s)-\mathcal{P}_n^1 B_0(\bu_n(s),\bu_n(s))-\alpha \mathcal{P}_n^1 R_0(\bu_n(s), \bw_n(s))  \,\biggr) \d s  \\
		&\qquad + \mu_0  \int_{\tau_{n}}^{\tau_{n} + \theta}   [\mathcal{P}_n^1 M_0(\bM_n(s),\bH_n(s))  
		+ \mathcal{P}_n^1 R_1(\bH_n(s), \bH_n(s))] \, \d s \biggr\lVert_{V^\prime}^{\frac87}\le C (\theta^{\frac{2}{7}} + \theta^{\frac17}).
	\end{align*}
From the above inequalities, we easily complete the proof of \eqref{Eq:Aldous-un}. Similar reasoning can be used to prove the remaining inequalities, so we leave their proofs as an exercise for the reader. Thus, we have completed the proof of the corollary.  
\end{proof}
\begin{remark}
 By the Chebyshev inequality, the result \eqref{Eq:Aldous-un} provides a sufficient condition that the sequence $(\bu_n,\bw_n,\bM_n,\bH_n,\bB_n)_n$ satisfies the Aldous condition (cf. \cite[Appendix A, Lemma 6.3]{Motyl}). Moreover, since the sequence $(\bu_n,\bw_n,\bM_n,\bH_n,\bB_n)_n$ satisfies the Aldous condition in the space $V^\prime \times \mathbb{H}_0^{-1} \times V_1^\prime \times V_1^\prime \times V_2^\prime$, and since they are uniformly bounded in the spaces
\begin{align*}
\tilde{\mathcal{Z}}_{\bu}&:= L_w^2(0,T;V) \cap L^2(0,T;H) \cap \mathcal{C}([0,T];V') \cap \mathcal{C}_w([0,T];H), 
\\
\tilde{\mathcal{Z}}_{\bw}&:= L_w^2(0,T;\mathbb{H}_0^1) \cap L^2(0,T;\mathbb{L}^2) \cap \mathcal{C}([0,T];\mathbb{H}^{-1}(\mathcal{O})) \cap \mathcal{C}_w([0,T];\mathbb{L}^2),  
    \\
\tilde{\mathcal{Z}}_{\bM}&:= L_w^2(0,T;V_1) \cap L^2(0,T;\mathbb{L}^2) \cap \mathcal{C}([0,T];V'_1) \cap \mathcal{C}_w([0,T];\mathbb{L}^2), 
       \\
\tilde{\mathcal{Z}}_{\bB}&:= L_w^2(0,T;V_2) \cap L^2(0,T;\mathbb{L}^2) \cap \mathcal{C}([0,T];V'_2) \cap \mathcal{C}_w([0,T];\mathbb{L}^2), 
           \\
\tilde{\mathcal{Z}}_{\bH} &\:=L_w^2(0,T;V_1) \cap L^2(0,T;\mathbb{L}^2) \cap \mathcal{C}([0,T];V'_1) \cap \mathcal{C}_w([0,T];\mathbb{L}^2),
\end{align*}
respectively, then by \cite[Lemma 3.1]{Motyl1}, $(\bw_n)_n,\, (\bM_n)_n,\, (\bH_n)_n$, and $(\bB_n)_n$ are relatively compact in the spaces $\tilde{\mathcal{Z}}_{\bu}$, $\tilde{\mathcal{Z}}_{\bw}$, $\tilde{\mathcal{Z}}_{\bM}$, $\tilde{\mathcal{Z}}_{\bB}$, and $\tilde{\mathcal{Z}}_{\bH}$, respectively. Consequently, the laws of $(\bw_n)_n,\, (\bM_n)_n,\, (\bH_n)_n$, and $(\bB_n)_n$ form  tight sequences on the spaces $\tilde{\mathcal{Z}}_{\bu}$, $\tilde{\mathcal{Z}}_{\bw}$, $\tilde{\mathcal{Z}}_{\bM}$, $\tilde{\mathcal{Z}}_{\bB}$, and $\tilde{\mathcal{Z}}_{\bH}$, respectively, with each space endowed with the supremum  topology.
\end{remark}

Now, we consider the constant sequence of cylindrical Wiener processes
              \begin{equation*}
             	\beta^{1,n} \equiv \beta^1,\quad  \beta^{2,n} \equiv \beta^2, \quad  \beta^{3,n} \equiv \beta^3, \quad  \beta^{4,n} \equiv \beta^4.
             \end{equation*}	
\begin{lemma}
The family of laws of $(\beta^{1,n})_{n\in\mathbb{N}},\, (\beta^{2,n})_{n\in\mathbb{N}},\, (\beta^{3,n})_{n\in\mathbb{N}},\, (\beta^{4,n})_{n\in\mathbb{N}}$ are tight in $\mathcal{C}([0,T];U_1), \, \mathcal{C}([0,T];U_1),\, \mathcal{C}([0,T];U_1),\, \mathcal{C}([0,T];U_2)$, respectively.	
\end{lemma}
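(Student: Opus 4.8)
The plan is to exploit the fact that each of the four sequences is \emph{constant}: by construction $\beta^{i,n}\equiv\beta^i$ for every $n\in\mathbb{N}$ and every $i\in\{1,2,3,4\}$. Consequently the family of laws $\{\mathrm{Law}(\beta^{i,n}):n\in\mathbb{N}\}$ consists of a single Borel probability measure $\mu_i:=\mathrm{Law}(\beta^i)$ on the relevant path space, and the assertion reduces to the statement that $\mu_i$ is a tight measure.

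First I would record that the path spaces involved are Polish. Indeed, $U_1=l^2(\mathbb{N})$ and $U_2=\mathbb{R}^N$ are separable Hilbert spaces, and for any separable Banach space $X$ the space $\mathcal{C}([0,T];X)$ equipped with the supremum norm is complete and separable. (If one prefers to realise the cylindrical processes $\beta^i$ as genuine continuous trajectories, one works in a separable Hilbert space $\mathfrak{U}_i\supset U_i$ into which $U_i$ embeds by a Hilbert--Schmidt operator; this space is again Polish and the argument below is unchanged.) Then I would invoke Ulam's theorem: on a Polish space every Borel probability measure is inner regular with respect to compact sets. Applying this to $\mu_i$, for each $\varepsilon>0$ there is a compact set $K^i_\varepsilon$ in the path space with $\mu_i(K^i_\varepsilon)>1-\varepsilon$; since $\mathrm{Law}(\beta^{i,n})=\mu_i$ for all $n$, the same $K^i_\varepsilon$ witnesses tightness of the entire family $\{\mathrm{Law}(\beta^{i,n})\}_{n\in\mathbb{N}}$, uniformly in $n$. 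This completes all four cases simultaneously.

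There is essentially no obstacle in this lemma; the only point deserving a line of care is the bookkeeping of the ambient space on which the cylindrical Wiener processes are defined, after which tightness of a constant (in particular, one-element) family of Borel probability measures on a Polish space is automatic. One could, equivalently, phrase the same remark as: a finite family of tight probability measures is tight, and the law of a single continuous process taking values in a Polish space is tight by Ulam's theorem. The main reason the statement is recorded at all is bookkeeping: together with the tightness of $(\bu_n)_n$, $(\bw_n)_n$, $(\bM_n)_n$, $(\bH_n)_n$, $(\bB_n)_n$ established in the previous remark, it yields tightness of the joint laws of the pairs (solutions, driving noises) on the product space, which is what is needed to apply the Skorokhod--Jakubowski embedding theorem in Step~3.
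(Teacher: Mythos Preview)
Your proposal is correct and matches the paper's own argument: the paper's proof is a single line, ``It directly follows from the fact that every measure on a complete separable metric space is tight,'' which is precisely the Ulam-theorem observation you spell out. Your additional remark about the ambient space for realising the cylindrical Wiener process is a useful clarification the paper leaves implicit, but the core reasoning is identical.
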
	
\begin{proof}
It directly follows from the fact that every measure on a complete separable metric space is tight.	
\end{proof}	
Finally, we can say that the family of laws of $(\bu_n,\bw_n,\bM_n,\bB_n,\bH_n, \beta^{1,n},\beta^{2,n},\beta^{3,n},\beta^{4,n})_{n\in\mathbb{N}}$ is tight in the cartesian product space
     \begin{equation*}
     	\mathcal{Z} = \tilde{\mathcal{Z}}_{\bu} \times \tilde{\mathcal{Z}}_{\bw} \times \tilde{\mathcal{Z}}_{\bM} \times \tilde{\mathcal{Z}}_{\bB} \times \tilde{\mathcal{Z}}_{\bH} \times \mathcal{C}([0,T];U_1) \times \mathcal{C}([0,T];U_1) \times \mathcal{C}([0,T];U_1) \times \mathcal{C}([0,T];U_2).
     \end{equation*}
Hence, an application of the Jakubowski Skorokhod embedding theorem, see \cite{Jakubowski_1997}, entails the following results. 
\begin{proposition}\label{Prop:Application-Prokhorov-Skorokhod}
There exists a complete probability space $$(\bar{\Omega},\bar{\mathcal{F}},\bar{\mathbb{P}})$$ and a subsequences of random vectors $(\bar{\bu}_{n_k},\bar{\bw}_{n_k},\bar{\bM}_{n_k},\bar{\bB}_{n_k}, \bar{\bH}_{n_k}, \bar{\beta}^{1,n_k}, \bar{\beta}^{2,n_k},\bar{\beta}^{3,n_k},\bar{\beta}^{4,n_k})_{k\in\mathbb{N}}$ with values in $\mathcal{Z}$ such that
\begin{itemize}
\item[(1)]
$(\bar{\bu}_{n_k},\bar{\bw}_{n_k},\bar{\bM}_{n_k},\bar{\bB}_{n_k},\bar{\bH}_{n_k},\bar{\beta}^{1,n_k}, 
\bar{\beta}^{2,n_k},\bar{\beta}^{3,n_k},\bar{\beta}^{4,n_k})_{k\in\mathbb{N}}$ has the same probability distribution as $(\bu_{n_k},\bw_{n_k},\bM_{n_k},\bB_{n_k}, \bH_{n_k}^{\lambda}, \beta^{1,n_k}, \beta^{2,n_k},\beta^{3,n_k},\beta^{4,n_k})_{k\in\mathbb{N}}$,
\item[(2)]
$(\bar{\bu}_{n_k},\bar{\bw}_{n_k},\bar{\bM}_{n_k},\bar{\bB}_{n_k},\bar{\bH}_{n_k},\bar{\beta}^{1,n_k},
\bar{\beta}^{2,n_k},\bar{\beta}^{3,n_k},\bar{\beta}^{4,n_k})_{k\in\mathbb{N}}$ converges in the topology of $\mathcal{Z}$ to a random element $(\bar{\bu},\bar{\bw},\bar{\bM},\bar{\bB}, \bar{\bH}, \bar{\beta}^1,\bar{\beta}^2,\bar{\beta}^3,\bar{\beta}^4) \in \mathcal{Z}$ with probability one on $(\bar{\Omega},\bar{\mathcal{F}},\bar{\mathbb{P}})$, as $k\to \infty$.
\item[(3)] $\bar{\beta}^{1,n}(\bar{\omega}) = \bar{\beta}^1(\bar{\omega}), \,
\bar{\beta}^{2,n}(\bar{\omega}) = \bar{\beta}^2(\bar{\omega}), \, \bar{\beta}^{3,n}(\bar{\omega}) = \bar{\beta}^3(\bar{\omega}), \, \bar{\beta}^{4,n}(\bar{\omega}) = \bar{\beta}^4(\bar{\omega})$  for all $\bar{\omega} \in \bar{\Omega}$.       
\end{itemize}
\end{proposition}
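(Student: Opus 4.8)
The plan is to bring $\mathcal Z$ within the scope of the Jakubowski version of the Skorokhod representation theorem and then to apply it to the tight family of laws exhibited above. The classical Skorokhod theorem is unavailable here, because several factors of $\mathcal Z$ carry non-metrizable topologies, namely the weak topology of $L^2(0,T;X)$ and the topology of $\mathcal C_w([0,T];X)$. By \cite{Jakubowski_1997} it is, however, enough to verify that $\mathcal Z$ is \emph{quasi-Polish}, i.e.\ that there is a countable family of real continuous functions on $\mathcal Z$ which separates points. First I would check this factor by factor: the spaces $\mathcal C([0,T];V')$, $\mathcal C([0,T];\mathbb H^{-1}(\mathcal O))$, $\mathcal C([0,T];V_1')$, $\mathcal C([0,T];V_2')$, $L^2(0,T;H)$, $L^2(0,T;\mathbb L^2)$, $\mathcal C([0,T];U_1)$ and $\mathcal C([0,T];U_2)$ are separable metric, hence quasi-Polish; for $L^2_w(0,T;X)$ with $X\in\{V,\mathbb H_0^1,V_1,V_2\}$ the functionals $v\mapsto\int_0^T(v(s),g_j(s))_X\,\d s$, with $\{g_j\}$ a countable dense subset of $L^2(0,T;X)$, are continuous and separate points; and for $\mathcal C_w([0,T];H)$, respectively $\mathcal C_w([0,T];\mathbb L^2)$, the functionals $v\mapsto\langle\varphi_j,v(t_l)\rangle$, with $\{\varphi_j\}$ a countable dense subset of the dual and $\{t_l\}$ an enumeration of $[0,T]\cap\mathbb Q$, do the same. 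Each of $\tilde{\mathcal Z}_{\bu}$, $\tilde{\mathcal Z}_{\bw}$, $\tilde{\mathcal Z}_{\bM}$, $\tilde{\mathcal Z}_{\bB}$, $\tilde{\mathcal Z}_{\bH}$, being the intersection of such spaces endowed with the supremum topology, is then quasi-Polish, and so is the finite product $\mathcal Z$.

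Next I would invoke the tightness of the laws of $(\bu_n,\bw_n,\bM_n,\bB_n,\bH_n,\beta^{1,n},\beta^{2,n},\beta^{3,n},\beta^{4,n})_n$ on $\mathcal Z$, established in the preceding paragraphs, together with Jakubowski's almost sure representation theorem for subsequences on quasi-Polish spaces \cite{Jakubowski_1997}. This produces a subsequence $(n_k)$, a complete probability space $(\bar\Omega,\bar{\mathcal F},\bar{\mathbb P})$, and $\mathcal Z$-valued random vectors $(\bar\bu_{n_k},\bar\bw_{n_k},\bar\bM_{n_k},\bar\bB_{n_k},\bar\bH_{n_k},\bar\beta^{1,n_k},\bar\beta^{2,n_k},\bar\beta^{3,n_k},\bar\beta^{4,n_k})$ and $(\bar\bu,\bar\bw,\bar\bM,\bar\bB,\bar\bH,\bar\beta^1,\bar\beta^2,\bar\beta^3,\bar\beta^4)$ on it such that the two random vectors in item~(1) have identical laws on $\mathcal Z$ for each $k$, and such that the former converges $\bar{\mathbb P}$-almost surely to the latter in the topology of $\mathcal Z$; this yields items~(1) and~(2).

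It remains to justify item~(3). Since the driving noises in the Galerkin system were chosen as the \emph{constant} sequence $\beta^{i,n}\equiv\beta^i$, the last four coordinates of the random vector fed into the representation theorem do not depend on $n$. I would then use the standard refinement of the Jakubowski--Skorokhod construction which leaves an $n$-independent coordinate unaltered along the new sequence (as in \cite{Motyl1,Motyl}); equivalently, one notes that $(\bar\beta^{i,n_k})_k$ is identically distributed and converges $\bar{\mathbb P}$-a.s.\ to $\bar\beta^i$, so the construction can be arranged so that $\bar\beta^{i,n_k}=\bar\beta^i$. Either way one obtains $\bar\beta^{i,n}(\bar\omega)=\bar\beta^i(\bar\omega)$ for all $\bar\omega\in\bar\Omega$ and all $i\in\{1,\dots,4\}$.

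The routine parts are the quasi-Polish verification and the matching of the laws; the only genuinely delicate points are that $\mathcal Z$ is non-metrizable, which forces the use of Jakubowski's generalisation in place of the classical Skorokhod theorem, and the pointwise identification of the Wiener processes in item~(3), which relies on the refined form of the representation theorem for a constant coordinate.
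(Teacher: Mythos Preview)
Your proposal is correct and follows exactly the approach the paper takes: the paper simply states that the proposition follows from ``an application of the Jakubowski Skorokhod embedding theorem, see \cite{Jakubowski_1997}'' after establishing tightness of the laws on $\mathcal Z$, and you have merely supplied the routine details (quasi-Polish verification for the factors of $\mathcal Z$, and the refinement for the constant Wiener-process coordinate as in \cite{Motyl1,Motyl}) that the paper leaves implicit.
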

To simplify our notation, we will denote these sequences again by 
$(\bu_n,\bw_n,\bM_n,\bB_n, \bH_n,\beta^{1,n},$ $\beta^{2,n}, \beta^{3,n},\beta^{4,n})$ and $(\bar{\bu}_{n},\bar{\bw}_{n},\bar{\bM}_{n},\bar{\bB}_{n},\bar{\bH}_n,\bar{\beta}^{1,n},\bar{\beta}^{2,n},\bar{\beta}^{3,n},\bar{\beta}^{4,n})$, respectively. Moreover, from the definition of the space $\mathcal{Z}$, we deduce that $\bar{\mathbb{P}}$-a.s.
  \begin{align}\label{eq6.66}
  	\bar{\bu}_{n} \to \bar{\bu} ~ &\text{in} ~ \tilde{\mathcal{Z}}_{\bu},  \notag
  	\\
  	\bar{\bw}_{n} \to \bar{\bw} ~ &\text{in} ~ \tilde{\mathcal{Z}}_{\bw},  \notag
  	\\
  	\bar{\bM}_{n} \to \bar{\bM} ~ &\text{in} ~  \tilde{\mathcal{Z}}_{\bM}, 
  	\\
  	\bar{\bB}_{n}\to \bar{\bB} ~ &\text{in} ~ \tilde{\mathcal{Z}}_{\bB},  \notag
  	\\
  	\bar{\bH}_n \to \bar{\bH} ~ &\text{in} ~ \tilde{\mathcal{Z}}_{\bH}. \notag
  \end{align}
\subsection*{Step 4: Passage to the limit} 
Let us recall the following fact, for the sake of what follows. If given a finite positive measure space $(E,\mathcal{M},\mu)$ and $X$ be a Banach space, then the Bochner space $L^r(E;X)$ is reflexive if and only if $L^r(E,\mu)$ and $X$ are reflexive (see, for instance, \cite[Corollary 2, p. 100]{Diestel}). \newline
The following lemma is of independent interest and will be useful later.
\begin{lemma}\label{lem5.8}
Let $X_1, X_2, Y_1, Y_2$ and $Z$ be (reflexive) Banach spaces such that the embeddings $X_1\subset X_2$ and $Y_1\subset Y_2$ are continuous. 
Let $\Sigma: X_1\times Y_1 \to Z^\prime$ be a bilinear map satisfying:  \\ there exist constants  $C>0$ and  $r \in (0,1]$  such that 
	\begin{equation}\label{Eq:Cont-Sigma}
	\lvert \langle \Sigma(x, y), z\rangle \lvert 
	\le C \|x\|_{X_2}^r \|x\|_{X_1}^{1-r} \|y\|_{Y_2}^r \| y\|_{Y_1}^{1-r} \|z\|_Z, \quad \forall x\in X_1, \, y\in Y_1, \, z\in Z. 
	\end{equation} 
Let $([x_\ell, y_\ell])_{\ell \in \mathbb{N}}\subset L^2(0,T; X_1\times Y_1)$ be a bounded sequence such that  as $\ell \to \infty$
     \begin{align}
        (x_\ell, y_\ell)&\to (x,y)\text{ strongly  in }  L^2(0,T; X_2\times Y_2),\label{Eq:Strong-Conv-X2}\\
        (x_\ell, y_\ell) &\rightharpoonup (x,y)\text{ weakly  in }  L^2(0,T; X_1\times Y_1). \label{Eq:Weak-Conv-X1}
     \end{align}
Then, 
\begin{align*}
  \int_0^T \langle \Sigma(x_\ell, y_\ell), z\rangle\,\d s \to \int_0^T \langle \Sigma(x, y), z\rangle \,\d s, \quad \forall z \in Z.  
\end{align*}
\end{lemma}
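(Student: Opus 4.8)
The plan is to fix $z\in Z$ and show that the bilinear expression $\langle \Sigma(x_\ell,y_\ell),z\rangle$ converges to $\langle\Sigma(x,y),z\rangle$ after integration in time, by splitting the difference in the standard bilinear way:
\[
\Sigma(x_\ell,y_\ell)-\Sigma(x,y)=\Sigma(x_\ell-x,y_\ell)+\Sigma(x,y_\ell-y).
\]
For the first term I would use the continuity estimate \eqref{Eq:Cont-Sigma} together with H\"older's inequality in time. Writing $p:=2/r$ and $q:=2/(1-r)$ if $r<1$ (so $1/p+1/q=1/2$), we get
\[
\Big|\int_0^T\langle\Sigma(x_\ell-x,y_\ell),z\rangle\,\d s\Big|
\le C\|z\|_Z\int_0^T\|x_\ell-x\|_{X_2}^r\|x_\ell-x\|_{X_1}^{1-r}\|y_\ell\|_{Y_2}^r\|y_\ell\|_{Y_1}^{1-r}\,\d s,
\]
and then H\"older (with exponents $p,q,p,q$, noting $r/p+(1-r)/q+r/p+(1-r)/q=1$) bounds the right side by
\[
C\|z\|_Z\,\|x_\ell-x\|_{L^2(0,T;X_2)}^r\,\|x_\ell-x\|_{L^2(0,T;X_1)}^{1-r}\,\|y_\ell\|_{L^2(0,T;Y_2)}^r\,\|y_\ell\|_{L^2(0,T;Y_1)}^{1-r}.
\]
By \eqref{Eq:Weak-Conv-X1} the sequence $(x_\ell)$ is bounded in $L^2(0,T;X_1)$, and $(y_\ell)$ is bounded in $L^2(0,T;Y_1)$ (hence also in $L^2(0,T;Y_2)$ by the continuous embedding); by \eqref{Eq:Strong-Conv-X2}, $\|x_\ell-x\|_{L^2(0,T;X_2)}\to0$. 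Since the power $r>0$, the whole product tends to $0$.

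For the second term, $\int_0^T\langle\Sigma(x,y_\ell-y),z\rangle\,\d s$, I would argue by a weak-convergence/duality argument rather than strong convergence of $y_\ell$. Fix $x\in L^2(0,T;X_1)$ and define the linear functional $\Lambda:L^2(0,T;Y_1)\to\mathbb{R}$ by $\Lambda(\eta)=\int_0^T\langle\Sigma(x(s),\eta(s)),z\rangle\,\d s$. By \eqref{Eq:Cont-Sigma} and H\"older (same exponents as above), $|\Lambda(\eta)|\le C\|z\|_Z\|x\|_{L^2(0,T;X_2)}^r\|x\|_{L^2(0,T;X_1)}^{1-r}\|\eta\|_{L^2(0,T;Y_2)}^r\|\eta\|_{L^2(0,T;Y_1)}^{1-r}\le C'\|\eta\|_{L^2(0,T;Y_1)}$, using $\|\eta\|_{Y_2}\le C\|\eta\|_{Y_1}$; hence $\Lambda\in\big(L^2(0,T;Y_1)\big)'$. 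Then $\Lambda(y_\ell-y)\to0$ is exactly the statement of weak convergence $y_\ell\rightharpoonup y$ in $L^2(0,T;Y_1)$ from \eqref{Eq:Weak-Conv-X1}. (Here I use that $L^2(0,T;Y_1)$ is reflexive, so weak convergence in this space is tested against its full dual; this is where the reflexivity hypotheses and the cited fact about reflexivity of Bochner spaces are used.) The case $r=1$ is simpler: one does not even need the intermediate spaces, and the same two arguments go through with $q$ replaced by trivial factors.

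Combining the two pieces, $\int_0^T\langle\Sigma(x_\ell,y_\ell)-\Sigma(x,y),z\rangle\,\d s\to0$, which is the claim. The main obstacle, and the only subtle point, is the treatment of the second term: one cannot use strong convergence of $y_\ell$ (it is only assumed strongly convergent in the weaker space $Y_2$, while $\Sigma$ needs an $X_1$-type or $Y_1$-type factor to be controlled), so one must package the $X_1$-dependence into a fixed bounded linear functional on $L^2(0,T;Y_1)$ and invoke the weak convergence \eqref{Eq:Weak-Conv-X1}; getting the H\"older exponents to close ($r/p+(1-r)/q=1/2$ twice) and verifying the functional is genuinely bounded on $L^2(0,T;Y_1)$ (using the embedding $Y_1\subset Y_2$) is the crux. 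Everything else is routine application of \eqref{Eq:Cont-Sigma}, H\"older, and boundedness of the sequences.
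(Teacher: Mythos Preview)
Your proof is correct and follows essentially the same approach as the paper: the same bilinear splitting $\Sigma(x_\ell,y_\ell)-\Sigma(x,y)=\Sigma(x_\ell-x,y_\ell)+\Sigma(x,y_\ell-y)$, the same strong-convergence/H\"older argument for the first piece, and the same ``fixed bounded linear functional on $L^2(0,T;Y_1)$ plus weak convergence'' argument for the second piece. The only cosmetic difference is that the paper first simplifies $\|y_\ell\|_{Y_2}^r\|y_\ell\|_{Y_1}^{1-r}\le C\|y_\ell\|_{Y_1}$ and then applies a three-factor H\"older, whereas you keep all four factors; also, your parenthetical check of the H\"older exponents is garbled (with $p=2/r$, $q=2/(1-r)$ the correct identity is $1/p+1/q+1/p+1/q=1$, not $r/p+(1-r)/q+\cdots=1$), but the resulting bound you write down is correct.
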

\begin{proof}
Let $z\in Z$. 	Since $\Sigma$ is bilinear, we have 
	\begin{equation*}
	 \int_0^T \langle \Sigma(x_\ell, y_\ell), z\rangle\, \d s - \int_0^T \langle \Sigma(x, y), z\rangle\, \d s = 	\int_0^T \langle \Sigma(x_\ell-x, y_\ell), z\rangle\, \d s + \int_0^T \langle \Sigma(x, y_\ell-y), z \rangle\, \d s.
	\end{equation*}
Thanks to the inequality \eqref{Eq:Cont-Sigma}, we see that the linear map $L^2(0,T; Y_1) \ni v \mapsto  \int_0^T \langle \Sigma(x, v), z\rangle \d s \in  \mathbb{R}$ is linear continuous (hence weakly continuous) which along with the  assumption \eqref{Eq:Weak-Conv-X1} implies that 
	\begin{align*}
		\int_0^T \langle \Sigma(x, y_\ell-y), z\rangle\, \d s\to 0 \text{ as } \ell \to \infty. 
	\end{align*}
Next, using \eqref{Eq:Cont-Sigma} and H\"older's inequality,  we obtain 
	\begin{align*}
		\biggl \lvert 	\int_0^T \langle \Sigma(x_\ell-x, y_\ell), z \rangle\, \d s \biggr\lvert &\leq  C \|z\|_{Z} \int_0^T \|x_\ell -x\|^{r}_{X_2} \| x_\ell -x\|^{1-r}_{X_1} \| y_\ell\|_{Y_1} \, \d s \\
		&\le C \|z\|_Z  \biggl(\int_0^T \| x_\ell -x\|^{2}_{X_2}\, \d s\biggr)^{\frac r2} \biggl( \int_0^T \| x_\ell -x\|^{2}_{X_1}\, \d s\biggr)^\frac{1-r}{2} \biggl(\int_0^T \| y_\ell \|^2_{Y_1}\, \d s \biggr)^\frac12, 
	\end{align*}
from which along with the boundedness of $(x_\ell-x)_{\ell \in \mathbb{N}}\subset L^2(0,T; X_1)$, $(y_\ell)_{\ell\in \mathbb{N}}\subset L^2(0,T; Y_1)$ and the strong convergence \eqref{Eq:Strong-Conv-X2}, we infer that 
	 \begin{equation*}
	 \int_0^T \langle \Sigma(x_\ell-x, y_\ell), z\rangle\,\d s \to 0 \text{ as } \ell \to \infty. 
	 \end{equation*}
Thus, the proof of the desired convergence in the lemma is now complete. 
\end{proof}
We also prove the following lemma. 
\begin{lemma}\label{cor5.9}
Let $Z$ be a Banach space and $Z^\prime$ its dual. 
Let $(\Sigma_\ell)_{\ell \in \mathbb{N}} \subset L^2(0,T; Z^\ast)$ be a bounded sequence such that 
	\begin{align}\label{Eq:Assum-WeakConv}
		\lim_{\ell \to \infty} \int_0^T\langle \Sigma_\ell(s), z \rangle \,\d s =\int_0^T \langle \Sigma(s), z\rangle \,\d s,
	\end{align}
for some $\Sigma \in L^2(0,T; Z^\prime)$ and for all $z\in Z$. 
If $P_\ell: Z\to Z$ is a linear map such that $P_\ell z\to z \text{ strongly in } Z$ for any $z\in Z$, then we have
  \begin{equation*}
  	\int_0^T \langle \Sigma_\ell (s), P_\ell z\rangle\, \d s \to \int_0^T \langle \Sigma(s), z\rangle\, \d s, \quad \forall z \in Z.  
  \end{equation*}
\end{lemma}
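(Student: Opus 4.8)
The plan is to reduce the claim to the hypothesis \eqref{Eq:Assum-WeakConv} by inserting the intermediate quantity $\int_0^T \langle \Sigma_\ell(s), z\rangle\,\d s$. Fix $z \in Z$. Using bilinearity of the duality pairing together with the linearity of $P_\ell$, I would write
\begin{equation*}
\int_0^T \langle \Sigma_\ell(s), P_\ell z\rangle\,\d s - \int_0^T \langle \Sigma(s), z\rangle\,\d s
= \int_0^T \langle \Sigma_\ell(s), P_\ell z - z\rangle\,\d s
+ \left(\int_0^T \langle \Sigma_\ell(s), z\rangle\,\d s - \int_0^T \langle \Sigma(s), z\rangle\,\d s\right),
\end{equation*}
so that it suffices to show that each of the two terms on the right tends to $0$ as $\ell \to \infty$. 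The second term vanishes in the limit directly by \eqref{Eq:Assum-WeakConv}.

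For the first term, I would use the operator-norm estimate for the pairing followed by the Cauchy--Schwarz inequality in the time variable. Setting $M := \sup_{\ell} \|\Sigma_\ell\|_{L^2(0,T;Z')} < \infty$, which is precisely the assumed boundedness of $(\Sigma_\ell)_\ell$, one obtains
\begin{equation*}
\left| \int_0^T \langle \Sigma_\ell(s), P_\ell z - z\rangle\,\d s \right|
\leq \|P_\ell z - z\|_Z \int_0^T \|\Sigma_\ell(s)\|_{Z'}\,\d s
\leq \sqrt{T}\, M\, \|P_\ell z - z\|_Z,
\end{equation*}
and since $P_\ell z \to z$ strongly in $Z$ by hypothesis, the right-hand side tends to $0$. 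Combining the two estimates yields the desired convergence.

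There is no genuine obstacle here; the argument is a soft one. The only points requiring a word of care are that the bound $M$ be uniform in $\ell$ (so that the product $M\,\|P_\ell z - z\|_Z$ still converges to $0$), that $s \mapsto \langle \Sigma_\ell(s), P_\ell z - z\rangle$ be integrable (which follows from $\Sigma_\ell \in L^2(0,T;Z') \subset L^1(0,T;Z')$ on the finite interval $[0,T]$), and that the pairing be jointly continuous so that the splitting above is legitimate. In the application, $Z$ will be one of the test spaces $V$, $\mathbb{H}_0^1$, $V_1$, $V_2$, the map $P_\ell$ will be the associated Galerkin projection, and $\Sigma_\ell$ one of the drift terms appearing in \eqref{Eq:Galerkin-App}; Lemma \ref{cor5.9} then upgrades the weak-type convergence furnished by Lemma \ref{lem5.8} to convergence tested against the \emph{projected} test functions that actually occur in the Galerkin system.
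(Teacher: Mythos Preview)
Your proof is correct and follows essentially the same approach as the paper: both split the difference into the term $\int_0^T \langle \Sigma_\ell(s), P_\ell z - z\rangle\,\d s$ (handled via Cauchy--Schwarz and the uniform $L^2(0,T;Z')$ bound together with $P_\ell z\to z$) and the term $\int_0^T \langle \Sigma_\ell(s)-\Sigma(s), z\rangle\,\d s$ (handled by the hypothesis \eqref{Eq:Assum-WeakConv}). Your write-up is in fact slightly more detailed than the paper's, which simply names the ingredients.
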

\begin{proof}
Let $(\Sigma_\ell)_{\ell \in \mathbb{N}} \subset L^2(0,T; Z^\ast)$  as in the statement of the lemma, $z\in Z$, $\ell\in \mathbb{N}$. We have 
	\begin{align*}
		\int_0^T \langle \Sigma_\ell(s), P_\ell z\rangle\, \d s - \int_0^T \langle \Sigma(s), z\rangle\, \d s= 	\int_0^T \langle \Sigma_\ell(s)-\Sigma(s), z\rangle\, \d s - \int_0^T \langle \Sigma_\ell(s), P_\ell z - z\rangle\, \d s .
	\end{align*}
By Cauchy-Schwarz's inequality, the boundedness of $(\Sigma_\ell)_{\ell \in \mathbb{N}} \subset L^2(0,T; Z^\ast)$, and the strong convergence $P_\ell z\to z$, we see that the second term  on the right-hand side of the above equality converges to $0$. With this in mind, and the assumption \eqref{Eq:Assum-WeakConv}, we easily complete the proof  of the lemma.
\end{proof}
Since the processes $(\bar{\bu}_{n},\bar{\bw}_{n},\bar{\bM}_{n},\bar{\bH}_n)$ and $(\bu_n,\bw_n,\bM_n,\bH_n)$ have the same law, it follows from Lemma \ref{lem6.4} that there exists a constant $C>0$ such that for every $n\in \mathbb{N}$ an for all $p \in [2,4]$, 
\begin{equation}\label{eq6.69}
	\begin{aligned}
		& \bar{\mathbb{E}} \sup_{t \in [0,T]} |\bar{\bu}_n(t)|^{2p} \leq C, ~ \bar{\mathbb{E}} \sup_{t \in [0,T]} |\bar{\bw}_n(t)|^{2p} \leq C, ~ \bar{\mathbb{E}} \sup_{t \in [0,T]} |\bar{\bM}_n(t)|^{2p} \leq C, 
		\\
		& \bar{\mathbb{E}} \sup_{t \in [0,T]} |\bar{\bH}_n(t)|^{2p} \leq C, ~ \bar{\mathbb{E}} \left(\int_0^{T} |\nabla \bar{\bu}_n(t)|^2 \d t \right)^p \leq C, ~ \bar{\mathbb{E}} \left(\int_0^{T} |\nabla \bar{\bw}_n(t)|^2 \d t \right)^p \leq C,
		\\ 
		& \bar{\mathbb{E}} \left(\int_0^{T} |\bar{\bH}_n(t)|^2 \d t \right)^p \leq C, ~ \bar{\mathbb{E}} \left(\int_0^{T} |\curl\bar{\bH}_n(t)|^2 \d t \right)^p \leq C, 
		\\
		& \bar{\mathbb{E}} \left(\int_0^{T} |\curl\bar{\bM}_n(t)|^2 \d t \right)^p \leq C, ~ \bar{\mathbb{E}} \left(\int_0^{T} |\diver\bar{\bM}_n(t)|^2 \d t \right)^p \leq C.
	\end{aligned}
\end{equation}
Here, $\bar{\mathbb{E}}$ denotes the mathematical expectation with respect to $\bar{\mathbb{P}}$. \newline
From \eqref{eq6.69} and the Banach-Alaoglu theorem, we conclude that there exists a subsequence of $(\bar{\bu}_{n})_{n}$, $(\bar{\bw}_{n})_{n}$, $(\bar{\bM}_{n})_{n}$, and $(\bar{\bH}_n)_{n}$ weakly convergent, and the limit processes fulfill the following regularity properties for all $p \in [2,4]$,
\begin{equation}\label{eqt6.70}
\begin{aligned}	
& \bar{\mathbb{E}} \sup_{t \in [0,T]} |\bar{\bu}(t)|^{2p} \leq C, ~ \bar{\mathbb{E}} \sup_{t \in [0,T]} |\bar{\bw}(t)|^{2p} \leq C, ~ \bar{\mathbb{E}} \sup_{t \in [0,T]} |\bar{\bM}(t)|^{2p} \leq C, 
  \\
& \bar{\mathbb{E}} \sup_{t \in [0,T]} |\bar{\bH}(t)|^{2p} \leq C, ~  
\bar{\mathbb{E}} \left(\int_0^{T} |\nabla \bar{\bu}(t)|^2 \d t \right)^p\leq C, ~ \bar{\mathbb{E}} \left( \int_0^{T} |\nabla \bar{\bw}(t)|^2 \d t \right)^p \leq C, ~
  \\ 
& \bar{\mathbb{E}} \left(\int_0^{T} |\curl\bar{\bH}(t)|^2 \d t \right)^p \leq C, ~ \bar{\mathbb{E}} \left(\int_0^{T} |\curl\bar{\bM}(t)|^2 \d t \right)^p \leq C, 
  \\
& \bar{\mathbb{E}} \left(\int_0^{T} |\diver\bar{\bM}(t)|^2 \d t \right)^p \leq C, ~ \bar{\mathbb{E}} \left(\int_0^{T} |\bar{\bH}(t)|^2 \d t \right)^p \leq C.
\end{aligned}
\end{equation}
In order to shorten the presentation, we will  rewrite the problem \eqref{Eq4.3} and its Galerkin approximation \eqref{Eq:Galerkin-App}  as an abstract stochastic evolution equations.  For this purpose, we set 
	$$\mathbb{V}= V \times \mathbb{H}_0^1 \times V_1 \times V_2 ~ \text{and} \quad \mathcal{U}= \mathcal{U}_1 \times \mathcal{U}_1 \times \mathcal{U}_1 \times \mathcal{U}_2.$$ 
For four Banach spaces $X_i, \,i=1,\ldots, 4$,  we understand that $(\prod_{i=1}^4 X_i )^\prime := \prod_{i=1}^4 X_i^\prime$. 
Then, we  set
\begin{align*}
	\langle Z^\ast, Z\rangle_{\mathbb{V}^\prime, \mathbb{V}}=\begin{pmatrix}
		\langle Z_1^\ast, Z_1\rangle_{V^\prime,V } \\
		\vdots \\
		\langle Z_4^\ast, Z_4 \rangle_{V_2^\prime, V_2}
	\end{pmatrix}, \quad \forall Z\in \mathbb{V}, \, Z^\ast \in \mathbb{V}^\prime. 
\end{align*}
We also define the map $\mathcal{P}_n: \mathbb{V} \to \mathbb{V}$ by 
$$ \mathcal{P}_nZ= (\mathcal{P}_n^1Z_1, \mathcal{P}_n^2Z_2,  \mathcal{P}_n^3Z_3, \mathcal{P}_n^3Z_4,), \quad Z\in \mathbb{V}.$$
Next,   we introduce the following mappings
$\mathbf{A}: \mathbb{V} \to \mathbb{V}'$, $\mathcal{B}: \mathbb{V} \times \mathbb{V} \to \mathbb{V}'$,  $\mathcal{R}: \mathbb{V} \to \mathbb{V}'$, and  $\bG:\mathbb{V} \to L_2(\mathcal{U},\mathbb{H})$   that are respectively defined by
\begin{align*}
      \mathbf{A} \Upsilon 
        &= \begin{pmatrix} 
		   \nu A \bu \\ 
		   \lambda_1 A_1 \bw  - (\lambda_1 + \lambda_2) R_5(\bw)\\
		   \lambda  R_6(\bM) - \lambda R_5(\bM) + \frac{1}{\tau} (\bM - \chi_0 \bH) \\
		      \frac{1}{\sigma} R_6(\bH)
           \end{pmatrix}, ~            
           \\
\mathcal{B}(\Upsilon, \Upsilon) 
& =\begin{pmatrix}
  B_0(\bu,\bu) - \mu_0 M_0(\bM,\bH) + \alpha R_0(\bu, \bw)
   \\
    \\
  B_1(\bu,\bw) - 2 \alpha R_2(\bu,\bw) - \mu_0 R_3(\bM,\bH)
	  \\
	   \\
  B_2(\bu,\bM) - R_3(\bw,\bM) 
		\\
		 \\
 - \tilde{M}_2(\bu,\bB)
	\end{pmatrix}, 
	\\
	       \mathcal{R}(\Upsilon) 
	        &=\begin{pmatrix}
		    - \mu_0 R_1(\bH, \bH)
		     \\
		      \\
		    0
		        \\
		         \\
		     0
		          \\
		           \\
		   0
	          \end{pmatrix},  
\end{align*}
and 
\begin{align*}
	 \bG(\Upsilon) 
	=\mathrm{diag} \left(
		F_1(\bu),\; 
		F_2(\bw),\; 
		G(\bM),\; 
		F_3(\bH)
	\right),
\end{align*}
where $\bH= \frac{1}{\mu_0} \bB - \bM$ and $\Upsilon=(\bu,\bw,\bM,\bB) \in \mathbb{V}$. \newline 
In view of these notations, we can now rewrite the variational formulation of \eqref{eq4.2a}-\eqref{eq4.2d} in the following form: 
    \begin{equation}\label{eq5.33}
    	\langle \d \Upsilon_n, Z\rangle_{\mathbb{V}^\prime, \mathbb{V}} + \langle(\mathbf{A} \Upsilon_n + \mathcal{B} (\Upsilon_n,\Upsilon_n) + \mathcal{R}(\Upsilon_n)) \d t, \mathcal{P}_n Z \rangle_{\mathbb{V}^\prime, \mathbb{V}} = \langle \bG (\Upsilon_n) \d {\beta}^{n}, Z\rangle_{\mathbb{V}^\prime, \mathbb{V}},
    \end{equation}
for all $Z\in \mathbb{V}$, where $\Upsilon_n= ({\bu}_n,{\bw}_n,{\bM}_n,{\bB}_n)$, ${\bB}_n={\bM}_n + \mu_0\bH_n$, and ${\beta}^{n}= ({\beta}^{1},{\beta}^{2},{\beta}^{3},{\beta}^{4})^{Tr}$. \newline
Next, let $\bar{\bH}_n$ and $( \bar{\bu}_n, \bar{\bw}_n, \bar{\bM}_n, \bar{\bB}_n)$, with $ \bar{\bB}_n= \mu_0 (\bar{\bM}_n+  \bar{\bH}_n)$, the processes from Proposition \ref{Prop:Application-Prokhorov-Skorokhod}. We set 
\begin{align*}
	 \widebar{\Upsilon}_n:=& (\widebar{\bu}_n,\widebar{\bw}_n,\widebar{\bM}_n,\widebar{\bB}_n), \\
	 \widebar{\beta}^{n}:=& (\widebar{\beta}^{1,n},\widebar{\beta}^{2,n},\widebar{\beta}^{3,n},\widebar{\beta}^{4,n})^{Tr},
\end{align*}
where the processes appearing on the RHS of the second equation  are from Proposition \ref{Prop:Application-Prokhorov-Skorokhod}. \newline
Since, by Proposition \ref{Prop:Application-Prokhorov-Skorokhod}, the laws of $\Upsilon_{n}$ and $\widebar{\Upsilon}_n$ (resp. $\beta^n$ and $ \bar{\beta}^n$) are equal, we can argue as in \cite{Bensoussan_1995} and show that 
\begin{equation}\label{eq5.33-B}
	\langle \d \widebar{\Upsilon}_n, Z\rangle_{\mathbb{V}^\prime, \mathbb{V}} + \langle( \mathbf{A} \widebar{\Upsilon}_n+ \mathcal{B} (\widebar{\Upsilon}_n, \widebar{\Upsilon}_n) + \mathcal{R}(\widebar{\Upsilon}_n))\d t, \mathcal{P}_n Z \rangle_{\mathbb{V}^\prime, \mathbb{V}} = \langle \bG (\widebar{\Upsilon}_n) \d \bar{\beta}^{n}, \mathcal{P}_n Z\rangle_{\mathbb{V}^\prime, \mathbb{V}},\; Z\in \mathbb{V}. 
\end{equation}
In order to prove Theorem \ref{theo 1}, we will pass to the limit in the integral version of  \eqref{eq5.33-B}. For that purpose, we state the following proposition. 
\begin{proposition}\label{propo5.10}
Let $\bar{\bH}$ and $\widebar{\Upsilon}=(\bar{\bu}, \bar{\bw}, \bar{\bM}, \bar{\bB})$, with $ \bar{\bB}= \mu_0 (\bar{\bM} +  \bar{\bH})$, be the stochastic processes from Proposition \ref{Prop:Application-Prokhorov-Skorokhod}. Then,   for all $\bU= (\bv, \upsilon, \boldsymbol{\psi}_1, \boldsymbol{\phi}_1) \in \mathbb{V}$,
	\begin{align}
	 \int_0^T \langle \mathbf{A} \widebar{\Upsilon}_n(s), \mathcal{P}_n \bU \rangle_{\mathbb{V}', \mathbb{V}} \, \d s \to & \int_0^T \langle \mathbf{A} \widebar{\Upsilon}(s), \bU \rangle_{\mathbb{V}', \mathbb{V}} \, \d s, ~ &\bar{\mathbb{P}}\text{-a.s.,} ~ \text{as} ~ n \to \infty, \label{eq5.35}
	     \\
	 \int_0^T \langle \mathcal{B}(\widebar{\Upsilon}_n(s), \widebar{\Upsilon}_n(s)), \mathcal{P}_n \bU\rangle_{\mathbb{V}', \mathbb{V}} \, \d s \to & \int_0^T \langle \mathcal{B}(\widebar{\Upsilon}(s), \widebar{\Upsilon}(s)), \bU \rangle_{\mathbb{V}', \mathbb{V}} \, \d s, ~ &\bar{\mathbb{P}}\text{-a.s.,} ~ \text{as} ~ n \to \infty,\label{eq5.34} 
	         \\
	\int_0^T \langle \mathcal{R}(\widebar{\Upsilon}_n(s)), \mathcal{P}_n\bU \rangle_{\mathbb{V}', \mathbb{V}} \, \d s \to & \int_0^T \langle \mathcal{R}(\widebar{\Upsilon}(s)), \bU \rangle_{\mathbb{V}', \mathbb{V}} \, \d s, ~ &\bar{\mathbb{P}}\text{-a.s.,} ~ \text{as} ~ n \to \infty.\label{eq5.36}
	\end{align}
%
%
\end{proposition}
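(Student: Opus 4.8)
The plan is to prove the three convergences \eqref{eq5.35}, \eqref{eq5.34}, \eqref{eq5.36} one block of $\mathbf{A}$, $\mathcal{B}$, $\mathcal{R}$ at a time, applying Lemmata \ref{lem5.8} and \ref{cor5.9} pathwise, since those lemmata are purely deterministic. The inputs are the $\bar{\mathbb{P}}$-a.s. convergences \eqref{eq6.66}: for $\bar{\mathbb{P}}$-a.e. $\bar{\omega}$ they give the strong convergence of $\bar{\bu}_n$ in $L^2(0,T;H)$ and of $\bar{\bw}_n,\bar{\bM}_n,\bar{\bB}_n,\bar{\bH}_n$ in $L^2(0,T;\mathbb{L}^2)$, together with the weak convergence of these sequences in $L^2(0,T;V)$, $L^2(0,T;\mathbb{H}_0^1)$, $L^2(0,T;V_1)$, $L^2(0,T;V_2)$, $L^2(0,T;V_1)$, respectively. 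The uniform boundedness principle applied to the $\mathcal{C}_w$-parts of \eqref{eq6.66} shows, in addition, that $\bar{\mathbb{P}}$-a.s. the sequences are bounded in $L^\infty(0,T;H)$, resp. $L^\infty(0,T;\mathbb{L}^2)$, hence (with the weak $L^2$-convergences) $\bar{\mathbb{P}}$-a.s. bounded in $L^2(0,T;\mathbb{V})$; and, by the choice of the Galerkin bases in Step~1, $\mathcal{P}_nZ\to Z$ strongly in $\mathbb{V}$ for every $Z\in\mathbb{V}$.

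For the linear term \eqref{eq5.35}, each block of $\mathbf{A}$ is a bounded linear map from the relevant energy space into its dual (use the boundedness of $A$, $A_1$, the estimates \eqref{Eq:Ineq-R5}, \eqref{Eq:Ineq-R6}, and $\bH=\mu_0^{-1}\bB-\bM$), so $\widebar{\Upsilon}_n\rightharpoonup\widebar{\Upsilon}$ weakly in $L^2(0,T;\mathbb{V})$ yields $\mathbf{A}\widebar{\Upsilon}_n\rightharpoonup\mathbf{A}\widebar{\Upsilon}$ weakly in $L^2(0,T;\mathbb{V}')$, with $(\mathbf{A}\widebar{\Upsilon}_n)_n$ $\bar{\mathbb{P}}$-a.s. bounded there; \eqref{eq5.35} then follows at once from Lemma \ref{cor5.9} with $\Sigma_\ell=\mathbf{A}\widebar{\Upsilon}_n$, $P_\ell=\mathcal{P}_n$ and $Z=\mathbb{V}$. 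The term \eqref{eq5.36} is nearly as quick, since only the first block $-\mu_0R_1(\bH,\bH)$ of $\mathcal{R}$ is nonzero and it is quadratic in $\bH$: combining the bound for $R_1$ in Lemma \ref{Lem:Bilinear-R1} with $\|\bH\|_{\mathbb{L}^4}\le C|\bH|^{1/4}\|\bH\|_{V_1}^{3/4}$ (Gagliardo--Nirenberg and \eqref{eq2.1}) and $|\curl\bH|\le\|\bH\|_{V_1}$ gives $|\langle R_1(\bH,\bH),\bv\rangle_{V',V}|\le C(|\bH|^{1/8}\|\bH\|_{V_1}^{7/8})^2\,|\nabla\bv|$, i.e. the form \eqref{Eq:Cont-Sigma} with $X_1=Y_1=V_1$, $X_2=Y_2=\mathbb{L}^2$, $Z=V$ and $r=\tfrac18$; Lemma \ref{lem5.8} then supplies the convergence against a fixed $\bU$, and the $L^{8/7}$-version of Lemma \ref{cor5.9} (its proof goes through verbatim with H\"older in place of Cauchy--Schwarz, since $(R_1(\bar{\bH}_n,\bar{\bH}_n))_n$ is $\bar{\mathbb{P}}$-a.s. bounded in $L^{8/7}(0,T;V')$ by the same bound and Lemma \ref{lem5}) lets us replace $\bU$ by $\mathcal{P}_n\bU$.

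The nonlinear term \eqref{eq5.34} is the core of the proof. I would split $\mathcal{B}$ into its pieces. The genuinely bilinear ones, $B_0$, $B_1$, $B_2$, $R_3$, $\tilde{M}_2$, each fit the template of Lemma \ref{lem5.8}: for $B_0,B_1,B_2$ take the base space $H$ (resp. $\mathbb{L}^2$), the energy space $V$ (resp. $\mathbb{H}_0^1$, $V_1$), $r=\tfrac14$, and use \eqref{eq2.6a}; for $R_3$ argue the same way using \eqref{Eq:Ineq-R3} and the $\mathbb{L}^4$-interpolation; for $\tilde{M}_2$ take $X_1=V$, $Y_1=V_2$, $X_2=H$, $Y_2=\mathbb{L}^2$, $Z=V_2$, $r=\tfrac14$, and use \eqref{eq2.9}. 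The maps $R_0$ and $R_2$ are even simpler: by the integration-by-parts representations in Lemmata \ref{lem:Bilinear-R0} and \ref{Lem:Bilinear-R1} they split into linear terms continuous in one slot on the energy space and in the other on $\mathbb{L}^2$, so plain weak/strong convergence does the job. The awkward term is $M_0$, because \eqref{eq2.3} carries $\|\bH\|_{V_1}$ unsplit and so does not match \eqref{Eq:Cont-Sigma}; for it I would write $M_0(\bar{\bM}_n,\bar{\bH}_n)-M_0(\bar{\bM},\bar{\bH})=M_0(\bar{\bM}_n-\bar{\bM},\bar{\bH}_n)+M_0(\bar{\bM},\bar{\bH}_n-\bar{\bH})$, bound the first summand by \eqref{eq2.3} using the strong convergence $\bar{\bM}_n\to\bar{\bM}$ in $L^2(0,T;\mathbb{L}^2)$ and the a.s. $L^2(0,T;V_1)$-boundedness of $(\bar{\bM}_n,\bar{\bH}_n)$, and kill the second by the linear weak continuity (in the $\bH$-slot) of $v\mapsto\int_0^T M_1(\bar{\bM},v,\bv)\,\d s$, again from \eqref{eq2.3}; alternatively rewrite $M_0$ via \eqref{eq2.2}--\eqref{eqt2.2} as a $\bB$-transport piece plus an $R_1$-type piece, both covered by Lemma \ref{lem5.8}. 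In all cases this yields, for fixed $\bU$ and $\bar{\mathbb{P}}$-a.s., $\int_0^T\langle\mathcal{B}(\widebar{\Upsilon}_n,\widebar{\Upsilon}_n),\bU\rangle\,\d s\to\int_0^T\langle\mathcal{B}(\widebar{\Upsilon},\widebar{\Upsilon}),\bU\rangle\,\d s$; since the bilinear bounds above and the a.s. boundedness noted in the first paragraph make $(\mathcal{B}(\widebar{\Upsilon}_n,\widebar{\Upsilon}_n))_n$ $\bar{\mathbb{P}}$-a.s. bounded in $L^{4/3}(0,T;\mathbb{V}')$, the $L^{4/3}$-version of Lemma \ref{cor5.9} replaces $\bU$ by $\mathcal{P}_n\bU$ and gives \eqref{eq5.34}.

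The hard part is the $M_0$ term: its continuity estimate \eqref{eq2.3} is not of the balanced split form required by Lemma \ref{lem5.8}, so the argument has to be rearranged to separate the genuinely nonlinear, strongly-convergent factor from the weakly-convergent one that is treated linearly. The only remaining technicality is the routine extension of Lemma \ref{cor5.9} from $L^2$ to $L^q$ with $q=\tfrac43,\tfrac87$, forced by the lower time-integrability of the nonlinear terms; its proof is otherwise unchanged.
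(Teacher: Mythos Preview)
Your proposal is correct and follows essentially the same strategy as the paper: block-by-block passage to the limit using Lemma~\ref{lem5.8} for the genuinely bilinear pieces and Lemma~\ref{cor5.9} to absorb the projections $\mathcal{P}_n$, with the linear terms handled by weak continuity.

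The only noticeable difference is a swap in how $M_0$ and $R_1$ are treated. The paper lumps $M_0$ into the list of maps ``satisfying the assumptions of Lemma~\ref{lem5.8}'' and instead handles $R_1$ by the explicit three-term splitting $R_1(\bar{\bH}_n,\bar{\bH}_n)-R_1(\bar{\bH},\bar{\bH})=R_1(\bar{\bH}_n-\bar{\bH},\bar{\bH}_n-\bar{\bH})+R_1(\bar{\bH}_n-\bar{\bH},\bar{\bH})+R_1(\bar{\bH},\bar{\bH}_n-\bar{\bH})$; you do the opposite, treating $M_0$ by a direct splitting (because \eqref{eq2.3} carries $\|\bH\|_{V_1}$ unsplit) and pushing $R_1$ through Lemma~\ref{lem5.8}. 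Both routes work, but note that your $R_1$ bound $|\langle R_1(\bH,\bH),\bv\rangle|\le C(|\bH|^{1/8}\|\bH\|_{V_1}^{7/8})^2|\nabla\bv|$ is only a \emph{diagonal} estimate and does not by itself give the bilinear form \eqref{Eq:Cont-Sigma}: from $|\curl\bH_1|\,\|\bH_2\|_{\mathbb{L}^4}$ you cannot extract an $|\bH_1|^{1/8}$ factor. This is harmless, since the \emph{proof} of Lemma~\ref{lem5.8} only uses the split on the slot where strong convergence is available (here the $\mathbb{L}^4$ slot), and the other slot enters only through weak linear continuity---exactly the asymmetry you yourself exploit for $M_0$. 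So either switch to that asymmetric reading of Lemma~\ref{lem5.8}, or fall back on the paper's three-term splitting for $R_1$. Your remark that Lemma~\ref{cor5.9} must be used in its $L^{4/3}$ and $L^{8/7}$ variants is correct and is an implicit step the paper does not spell out.
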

\begin{proof}
Since the maps $\mathcal{P}_n^i: X^i  \to X_n^i $ (where $(X^i: i=1,2,3)= (H, \mathbb{H}_0^1, V_1)$) are orthogonal projections onto the finite dimensional spaces $X_n^i$, with  $(X_n^i: i=1,2,3)= (\bar{V}_n,\bar{H}^1_{0n} ,V_{1n})$ and $V_2\subset V_1$, we see that  for any $i\in \{1,2,3,4\}$:
\begin{align}\label{Eq:Conv-proj}
	\mathcal{P}^i_n \mathbf{U}_i \to \mathbf{U}_i \text{ in $\mathbb{V}_i$ as $n\to \infty,$ }  
\end{align}
where we understand that $\mathcal{P}_n^4= \mathcal{P}_n^3$, and $\mathbf{U}_i$,  $\mathbb{V}_i$ denote the $i^{\text{th}}$ component of the vector $\bU$ and the product space $\mathbb{V}$, respectively. \newline
Now,  the convergences stated in  \eqref{eq6.66}, the continuity of the  linear maps $A: V\to V^\prime$, $A_1:\mathbb{H}_0^1 \to [H^{-1}(\mathcal{O})]^3$, $R_5: X\to X^\prime$, $R_6: Z\to Z^\prime$ with $X\in \{\mathbb{H}_0^1, V_1\}$ and $Z\in \{V_1, V_2\}$  imply that  for any $i\in \{1,2,3,4\}$ 
	$$\int_0^T \langle \mathbf{A}_i \widebar{\Upsilon}_n(s), \bU_i \rangle_{\mathbb{V}_i', \mathbb{V}_i} \, \d s \to  \int_0^T \langle \mathbf{A}_i \widebar{\Upsilon}(s), \bU_i \rangle_{\mathbb{V}_i', \mathbb{V}_i} \, \d s, ~ \bar{\mathbb{P}} \text{-a.s.,} ~ \text{as} ~ n \to \infty,$$
from which along with \eqref{Eq:Conv-proj} and Lemma  \ref{cor5.9}, we deduce the  convergence \eqref{eq5.35}. \newline
As for the convergence \eqref{eq5.34}, we first notice that owing to the definition of $R_0$ and Lemma \ref{lem:Bilinear-R0}, we can decompose $R_0$ as the sum of two continuous linear maps $R_0^1: V\to V^\ast $ and $R_0^2: \mathbb{H}_0^1 \to [H^{-1}(\mathcal{O})]^3$. Thus, arguing as above, we prove that 
\begin{align}\label{Eq:Conv-R0}
	\int_0^T \langle R_0(\bar{\bu}_n, \bar{\bw}_n), \mathcal{P}^1_n \mathbf{U}_1\rangle_{V^\prime,V} \,\d s \to 	\int_0^T \langle R_0(\bar{\bu}, \bar{\bw}),  \mathbf{U}_1\rangle_{V^\prime,V} \,\d s, ~ \bar{\mathbb{P}}\text{-a.s.,} ~ \text{as} ~ n \to \infty. 
\end{align} 
Second, thanks to \eqref{eq2.6a} along with Lemmas \ref{lem1} and \ref{lem1B}, the maps $M_0$, $B_k,\; k=0,1,2$, $R_k,\; k=2,3$ and $\tilde{M}_2$ satisfy the assumptions of Lemma \ref{lem5.8}. Thus, applying Lemma \ref{lem5.8} in conjunction with the convergences stated in \eqref{eq6.66} and \eqref{Eq:Conv-proj}, respectively, Lemma \ref{cor5.9}, and \eqref{Eq:Conv-R0}, we obtain that 
for any $i\in \{1,2,3,4\}$ 
$$\int_0^T \langle \mathcal{B}_i (\widebar{\Upsilon}_n(s),\widebar{\Upsilon}_n(s)), \mathcal{P}^i_n \bU_i \rangle_{\mathbb{V}_i', \mathbb{V}_i} \, \d s \to  \int_0^T \langle \mathcal{B}_i (\widebar{\Upsilon}(s),\widebar{\Upsilon}(s)), \bU_i \rangle_{\mathbb{V}_i', \mathbb{V}_i} \, \d s, ~ \bar{\mathbb{P}}\text{-a.s.,} ~ \text{as} ~ n \to \infty,$$
from which we easily conclude the proof of \eqref{eq5.34}. \newline
In order to prove \eqref{eq5.36}, we note that by Lemma \ref{Lem:Bilinear-R1}, we have 
\begin{equation*}
	\begin{aligned}
		&\langle R_1(\bar{\bH}_n, \bar{\bH}_n),\mathbf{U}_1\rangle_{V',V} - \langle R_1(\bar{\bH}, \bar{\bH}),\mathbf{U}_1 \rangle_{V',V} \\
		&= \int_\mathcal{O} [\curl(\bar{\bH}_n-\bar{\bH}) \times (\bar{\bH}_n-\bar{\bH})] \cdot \mathbf{U}_1\, \d x 
		+ \int_{\mathcal{O}} [\curl(\bar{\bH}_n-\bar{\bH}) \times \bar{\bH}] \cdot  \mathbf{U}_1\, \d x \\
		&\qquad + \int_{\mathcal{O}} [\curl\bar{\bH} \times (\bar{\bH}_n-\bar{\bH})] \cdot \mathbf{U}_1\, \d x,
	\end{aligned}
\end{equation*}
from which along with the fifth convergence in  \eqref{eq6.66}  implies 
	$$\int_0^T \langle R_1 (\widebar{\Upsilon}_n(s),\widebar{\Upsilon}_n(s)), \bU_1 \rangle_{\mathbb{V}_1', \mathbb{V}_1} \, \d s \to  \int_0^T \langle R_1 (\widebar{\Upsilon}(s),\widebar{\Upsilon}(s)), \bU_1 \rangle_{\mathbb{V}_1', \mathbb{V}_1} \, \d s, ~ \bar{\mathbb{P}}\text{-a.s.,} ~ \text{as} ~ n \to \infty.$$
With this at hand and Lemma \ref{cor5.9}, we can readily complete the proof of \eqref{eq5.36}. 
\end{proof}
\begin{proposition}\label{Prop:Proposition-12}
Let $\bar{\bH}$ and $\widebar{\Upsilon}=( \bar{\bu}, \bar{\bw}, \bar{\bM}, \bar{\bB})$, with $ \bar{\bB}= \mu_0 (\bar{\bM} +  \bar{\bH})$, be the stochastic processes as given by Proposition \ref{Prop:Application-Prokhorov-Skorokhod}. Then, there exists a subsequence of  $\widebar{\Upsilon}_n$ and $\bar{\beta}^n$, still denoted with the same symbols, such that   as $n\to \infty$
	\begin{equation}
	  \int_0^t \bG^n(\widebar{\Upsilon}_n(s)) \, \d \widebar{\beta}^{n}(s) \to \int_0^t \bG(\widebar{\Upsilon}(s)) \, \d \bar{\beta}(s) ~ \text{  in probability in} ~ L^2(0,T;\mathbb{V}^\prime),
	\end{equation}
for all $t \in[0,T]$. 
\end{proposition}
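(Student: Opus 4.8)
The plan is as follows. On the probability space $(\bar\Omega,\bar{\mathcal F},\bar{\mathbb P})$ produced by Proposition \ref{Prop:Application-Prokhorov-Skorokhod} one first fixes the filtration $\bar{\mathbb F}=(\bar{\mathcal F}_t)_{t\in[0,T]}$ obtained by the usual augmentation of the one generated by $\{\widebar{\Upsilon}_n(s),\widebar{\Upsilon}(s),\bar\beta(s):s\le t,\ n\in\mathbb N\}$. Arguing exactly as in \cite{Bensoussan_1995} (see also \cite{Motyl1}), the equality of laws from Proposition \ref{Prop:Application-Prokhorov-Skorokhod} transfers the independence/martingale relations to the new space, so each $\bar\beta^i$ is an $\bar{\mathbb F}$--Wiener process and $\widebar{\Upsilon}_n,\widebar{\Upsilon}$ are $\bar{\mathbb F}$--progressively measurable; in addition $\bar\beta^n=\bar\beta$ pathwise by Proposition \ref{Prop:Application-Prokhorov-Skorokhod}(3). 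Writing $\bG^n:=\mathcal P_n\bG$, the two stochastic integrals in the statement are therefore well-defined $\mathbb V'$--valued martingales driven by the \emph{same} Brownian motion, and since $\mathbb V'=V'\times\mathbb H^{-1}\times V_1'\times V_2'$, the It\^o isometry \eqref{eq3.1} applied in each component gives
\[
\bar{\mathbb E}\Bigl\|\int_0^t\bigl(\bG^n(\widebar{\Upsilon}_n(s))-\bG(\widebar{\Upsilon}(s))\bigr)\,\d\bar\beta(s)\Bigr\|_{\mathbb V'}^2
=\bar{\mathbb E}\int_0^t\bigl\|\bG^n(\widebar{\Upsilon}_n(s))-\bG(\widebar{\Upsilon}(s))\bigr\|_{L_2(\mathcal U,\mathbb V')}^2\,\d s,\qquad t\in[0,T].
\]
Hence it suffices to show that the right-hand side tends to $0$ as $n\to\infty$; integrating in $t$ then yields convergence in $L^2(\bar\Omega;L^2(0,T;\mathbb V'))$, which is stronger than the asserted convergence in probability in $L^2(0,T;\mathbb V')$.

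The first and main step is the $\bar{\mathbb P}$--a.s.\ convergence $\bG^n(\widebar{\Upsilon}_n)\to\bG(\widebar{\Upsilon})$ in $L^2(0,T;L_2(\mathcal U,\mathbb V'))$. As in the proof of Proposition \ref{propo5.10}, the $\mathbb L^2$--self-adjoint projections $\mathcal P_n^i$ are, by \eqref{Eq:Conv-proj} and Banach--Steinhaus, uniformly bounded and strongly convergent to the identity on $V,\mathbb H_0^1,V_1,V_2$, hence (by duality) also on $V',\mathbb H^{-1},V_1',V_2'$. Since $\bG$ is diagonal one treats each block separately; for the first, split
\[
\mathcal P_n^1F_1(\bar{\bu}_n)-F_1(\bar{\bu})=\mathcal P_n^1\bigl(F_1(\bar{\bu}_n)-F_1(\bar{\bu})\bigr)+\bigl(\mathcal P_n^1-\mathrm{Id}\bigr)F_1(\bar{\bu}).
\]
For the first term, $F_1$ extends to a bounded linear map $H\to L_2(U_1,V')$ with $\|F_1(\bv)\|_{L_2(U_1,V')}^2\le 2C_1|\bv|^2$ by \eqref{Eq3.10}, so its $L^2(0,T;L_2(U_1,V'))$--norm is bounded by $C\bigl(\int_0^T|\bar{\bu}_n-\bar{\bu}|^2\,\d s\bigr)^{1/2}\to0$ $\bar{\mathbb P}$--a.s., using the strong convergence $\bar{\bu}_n\to\bar{\bu}$ in $L^2(0,T;H)$ from \eqref{eq6.66}. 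For the second term, $\mathcal P_n^1F_1(\bar{\bu}(s))\to F_1(\bar{\bu}(s))$ in $L_2(U_1,V')$ for a.e.\ $s$, with integrand dominated by $C\|F_1(\bar{\bu}(s))\|_{L_2(U_1,V')}^2\le C|\bar{\bu}(s)|^2\in L^1(0,T)$ $\bar{\mathbb P}$--a.s.\ (as $\bar{\bu}\in L^\infty(0,T;H)$ a.s.), so dominated convergence applies. The blocks involving $F_2,G,F_3$ are handled verbatim, using the strong $L^2(0,T;\mathbb L^2)$--convergences of $\bar{\bw}_n,\bar{\bM}_n,\bar{\bH}_n$ in \eqref{eq6.66} (recall $\bar{\bH}_n=\mu_0^{-1}\bar{\bB}_n-\bar{\bM}_n$), the bounds \eqref{Eq3.10}, and the a.s.\ memberships $\bar{\bM},\bar{\bH}\in L^\infty(0,T;\mathbb L^2)$; adding the four blocks gives the claimed a.s.\ convergence.

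It remains to upgrade this to convergence in $L^1(\bar\Omega)$. The moment bounds \eqref{eq6.69}--\eqref{eqt6.70} (valid for $p\in[2,4]$) together with \eqref{Eq3.10} show $\sup_n\bar{\mathbb E}\bigl(\int_0^T\|\bG^n(\widebar{\Upsilon}_n(s))-\bG(\widebar{\Upsilon}(s))\|_{L_2(\mathcal U,\mathbb V')}^2\,\d s\bigr)^2<\infty$, so the sequence $\bigl(\int_0^T\|\bG^n(\widebar{\Upsilon}_n)-\bG(\widebar{\Upsilon})\|_{L_2(\mathcal U,\mathbb V')}^2\,\d s\bigr)_n$ is uniformly integrable on $\bar\Omega$; combined with the $\bar{\mathbb P}$--a.s.\ convergence just obtained, Vitali's theorem yields $\bar{\mathbb E}\int_0^T\|\bG^n(\widebar{\Upsilon}_n(s))-\bG(\widebar{\Upsilon}(s))\|_{L_2(\mathcal U,\mathbb V')}^2\,\d s\to0$, and the reduction of the first paragraph concludes. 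I expect the genuine obstacle to be the first step, namely checking that the Galerkin projections $\mathcal P_n^i$ remain uniformly bounded and strongly convergent in the dual spaces $V',\mathbb H^{-1},V_1',V_2'$; this rests on the special bases of Step~1 (eigenfunctions of the Stokes operator $A$ and of $A_1$, and the $V_1$--adapted basis of Lemma \ref{lem4} for $V_1=V_2\oplus\mathcal H$), and is precisely the property underlying \eqref{Eq:Conv-proj}. A minor technical point is the transfer of the stochastic calculus (It\^o isometry, progressive measurability, Wiener property of $\bar\beta^i$) to $(\bar\Omega,\bar{\mathcal F},\bar{\mathbb P})$, handled as in \cite{Bensoussan_1995}.
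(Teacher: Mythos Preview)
Your proof is correct and follows the paper's approach for the core of the argument: you establish the $\bar{\mathbb P}$--a.s.\ convergence $\bG^n(\widebar{\Upsilon}_n)\to\bG(\widebar{\Upsilon})$ in $L^2(0,T;L_2(\mathcal U,\mathbb V'))$ using the linearity of the noise coefficients, the dual bounds \eqref{Eq3.10}, and the convergences \eqref{eq6.66}, then upgrade to $L^2(\bar\Omega;L^2(0,T;L_2(\mathcal U,\mathbb V')))$ via uniform higher moments and Vitali's theorem. This is exactly what the paper does (though more tersely).

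The one genuine difference is in the final step. The paper concludes by invoking \cite[Lemma 2.1]{Debussche+Glatt+Temam_2011}, a general convergence lemma for stochastic integrals. You instead exploit item~(3) of Proposition~\ref{Prop:Application-Prokhorov-Skorokhod}, namely $\bar\beta^n=\bar\beta$ pathwise, so both integrals are driven by the \emph{same} Wiener process, and the It\^o isometry \eqref{eq3.1} reduces everything directly to the integrand convergence already obtained. Your route is more elementary and self-contained; the paper's route is a black-box citation but would also cover the case where the Wiener processes only converge in law rather than coincide. Both are valid here. Your flagging of the uniform boundedness of $\mathcal P_n^i$ on the dual spaces as the delicate point is apt; the paper handles this equally loosely by asserting the projections are ``non-expansive'' without further comment.
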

\begin{proof}
Let \begin{align*}
	\bG^n(\widebar{\Upsilon}) 
	=\mathrm{diag} \left(
	\mathcal{P}_n^1 F_1(\bar{\bu}),\; 
		\mathcal{P}_n^2 F_2(\bar{\bw}),\; 
		\mathcal{P}_n^3 G(\bar{\bM}),\; 
	\mathcal{P}_n^3 F_3(\bar{\bH})
	\right), \quad \widebar{\Upsilon} \in \mathbb{V}. 
\end{align*}
Note also that since the components of $\bG^n$  are linear maps, $\bG^n$ is  also a linear, hence Lipschitz continuous,  which along with the second part of Proposition \ref{propo-1} and  \eqref{eq6.66} we infer that $\bG^n(\widebar{\Upsilon}_n) \to \bG(\widebar{\Upsilon})$ in $L^2(0,T;L_2(\mathcal{U},\mathbb{V}^\prime))$ $\bar{\mathbb{P}}$-a.s., as $n \to \infty$. Since each orthogonal projection $\mathcal{P}_n^i: \mathbb{V}_i\to \mathbb{V}_i$ for $i\in {1,2,3}$ is non-expansive as an operator, we obtain from  \eqref{Eq3.10} that there exists a constant $C>0$ such that for all $n\in \mathbb{N}$ 
  \begin{equation*}
	\|\bG^n(\widebar{\Upsilon}_n)\|_{L_2(\mathcal{U},\mathbb{V}^\prime)}^2 
	\leq C \left(|\bar{\bu}_n|^2 + |\bar{\bw}_n|^2 +  |\bar{\bM}_n|^2 +  |\bar{\bH_n}|^2\right),
\end{equation*}
which along with \eqref{eq6.69} imply that the sequence $\bG^n(\widebar{\Upsilon}_n)$ is uniformly bounded in $L^2(\bar{\Omega}; L^2(0,T;L_2(\mathcal{U},\mathbb{V}^\prime)))$. Consequently, by the Vitali-Convergence Theorem,  we have 
    \begin{equation}\label{eqt5.39-A}
    	\bG^n(\widebar{\Upsilon}_n) \to \bG(\widebar{\Upsilon}) ~ \text{in} ~ L^2(\bar{\Omega};L^2(0,T;L_2(\mathcal{U},\mathbb{V}^\prime))) ~ \text{as} ~ n \to \infty.
    \end{equation}
With this in mind, we can now apply \cite[Lemma 2.1]{Debussche+Glatt+Temam_2011} and obtain
\begin{equation}
	\int_0^t \bG^n(\widebar{\Upsilon}_n(s)) \, \d \widebar{\beta}^{n}(s) \to \int_0^t \bG(\widebar{\Upsilon}(s)) \, \d \bar{\beta}(s) ~ \text{  in probability in} ~ L^2(0,T;\mathbb{V}^\prime),
\end{equation}
which completes the proof of the proposition.
\end{proof}
We are now ready to complete the proof of Theorem \ref{theo 1}. 
\begin{proof}[Proof of Theorem \ref{theo 1}]
Recall that  the integral version of \eqref{eq5.33-B} is 
  \begin{align*}
  	& \langle \widebar{\Upsilon}_{n}(t) - \widebar{\Upsilon}_{n}(0), \bU \rangle_{\mathbb{V}^\prime, \mathbb{V}} + \int_0^t \langle(\mathbf{A} \widebar{\Upsilon}_n(s) + \mathcal{B} (\widebar{\Upsilon}_n(s), \widebar{\Upsilon}_n(s)) + \mathcal{R}(\widebar{\Upsilon}_n(s))), \mathcal{P}_n \bU \rangle_{\mathbb{V}^\prime, \mathbb{V}} \,\d s \\
  	& \quad - \left \langle \int_0^t \bG^n(\widebar{\Upsilon}_n(s)) \d \bar{\beta}^{n}(s),  \bU \right \rangle_{\mathbb{V}^\prime, \mathbb{V}}
  		=0, \quad \bar{\mathbb{P}}\text{-a.s.,} \quad \forall t \in [0,T] \quad \text{and} \quad \forall \bU \in \mathbb{V}.
  \end{align*}
By using Propositions \ref{propo5.10} and \ref{Prop:Proposition-12}  and passing to the limit in the above equation, we deduce that $\widebar{\Upsilon}$ satisfies for all $\bU \in \mathbb{V}$ and $t \in [0,T]$,
  \begin{equation}
   \begin{aligned}
  	 & \langle \widebar{\Upsilon}(t) - \widebar{\Upsilon}(0), \bU \rangle_{\mathbb{V}^\prime, \mathbb{V}} + \int_0^t \langle(\mathbf{A} \widebar{\Upsilon}(s) + \mathcal{B} (\widebar{\Upsilon}(s), \widebar{\Upsilon}(s)) + \mathcal{R}(\widebar{\Upsilon}(s))), \bU \rangle_{\mathbb{V}^\prime, \mathbb{V}} \,\d s \\
  	 & \quad - \left \langle\int_0^t \bG(\widebar{\Upsilon}(s)) \d \bar{\beta}(s), \bU \right \rangle_{\mathbb{V}^\prime, \mathbb{V}}
   	 =0, \quad \bar{\mathbb{P}}\text{-a.s.},
  \end{aligned}
  \end{equation}
where $\widebar{Y}(0)=(\bu_0, \bw_0, \bM_0, \bB_0)$. This complete the proof of Theorem \ref{theo 1}. 
\end{proof}

\appendix 

\section{Proofs of the auxiliary results in Section \ref{sect2}}\label{Appendix-1}

In this appendix we give the proofs of the auxiliary results, namely Lemmata \ref{lem1}-\ref{lem:Bilinear-R0} and Proposition \ref{propo-1}, stated in Section \ref{sect2}. 

\begin{proof}[\textbf{Proof of Lemma \ref{lem1}}]
We first recall that 
\begin{equation*}
	M_1(\bM,\bH,\bv):= \sum_{i,j=1}^3 \int_{\mathcal{O}} \bM_i(x) (\partial_i \bH_j(x)) \bv_j(x) \, \d x, \quad \forall \bM,\,\bH \in V_1,\; \bv \in V.
\end{equation*}
For the proof of \eqref{eqt2.2}, we refer the reader to \cite[Lemma 1]{Kamel}. \newline
Let $\bv \in V$ and $\bM, \bH \in V_1$ be arbitrary. 
By integration by parts along with the fact that $\diver(\bM + \bH)= 0$ (cf. \eqref{Eq4.5a}), $\bv|_{\partial\mathcal{O}}=0$ and $\diver\bv=0$, we find
	\begin{align*}
		M_1(\bM,\bH,\bv)  
		= \int_{\mathcal{O}} \bM_i (\partial_i \bH_j) \bv_j \, \d x
		&= \int_{\mathcal{O}} [(\bM_i + \bH_i) \partial_i \bH_j - \bH_i (\partial_i \bH_j - \partial_j \bH_i) - \bH_i (\partial_j \bH_i)] \bv_j \,\d x
		\\
		& = - \int_{\mathcal{O}} [(\bM + \bH) \cdot \nabla] \bv \cdot \bH \, \d x - \int_{\mathcal{O}} \curl\bH \cdot (\bH \times \bv) \, \d x.
	\end{align*}
Here we have also used the summation convention on repeated indices. This proves \eqref{eq2.2}.
\newline
Next, from the definition of $M_1$ and H\"older's inequality, we obtain
	\begin{equation*}
		|M_1(\bM,\bH,\bv)| 
		\leq \|\bM\|_{\mathbb{L}^4} |\nabla \bH| \|\bv\|_{\mathbb{L}^4} 
		\leq C(\mathcal{O}) |\bM|^\frac{1}{4} |\nabla \bH| \|\bv\|_{\mathbb{L}^4}  \|\bM\|_{V_1}^\frac{3}{4},
	\end{equation*} 
from which we get \eqref{eq2.3}.
By \eqref{eq2.2} together with H\"older's inequality, we easily derive \eqref{eq2.4}. \newline
For the case $\bv \in V \cap \mathbb{H}^2$, we use \eqref{eq2.2}, H\"older's inequality together with the embedding of $\mathbb{H}^2$ in $\mathbb{L}^\infty$ and the embedding of $\mathbb{H}^1$ in $\mathbb{L}^4$, respectively. Thus,
\begin{align*}
|M_1(\bM,\bH,\bv)| 
& \leq |\bH +\bM| \|\nabla \bv\|_{\mathbb{L}^4} \|\bH\|_{\mathbb{L}^4} + |\curl\bH| |\bH| \|\bv\|_{\mathbb{L}^\infty} \\
&\leq C(\mathcal{O}) |\bH +\bM| \|\bv\|_{\mathbb{H}^2} \|\bH\|_{\mathbb{L}^4} + C(\mathcal{O}) |\curl\bH| |\bH| \|\bv\|_{\mathbb{H}^2}.
\end{align*}
This proves \eqref{eq2.5}. The proof of Lemma \ref{lem1} is now complete.
\end{proof}
We now proceed with the proofs of Lemmata \ref{lem1B} and \ref{lem:Bilinear-R0}. 
\begin{proof}[\textbf{Proof of Lemma \ref{lem1B}}]
Recall that 
$$ M_2(\bu,\bB, \psi)
:= \int_{\mathcal{O}} [\curl (\bu \times \bB)] \cdot \psi \, \d x,\; \forall \bB \in V_2,\, \bu \in V \text{ and } \psi \in \mathbb{H}^1.$$ 
We note that for  $\bB \in V_2$ and $\bu \in V$, we have the following equality, which is to be understood in the weak sense $$\curl (\bu \times \bB)= (\diver\bB + \bB \cdot \nabla) \bu - (\diver\bu + \bu \cdot \nabla) \bB.$$ Now, since $\diver\bB=0$ and $\diver\bu=0$ in $\mathcal{O}$, we can rewrite the previous equality as follows: $\curl(\bu \times \bB)= (\bB \cdot \nabla)\bu - (\bu \cdot \nabla) \bB$. Consequently, 
	\begin{align*}
		M_2(\bu,\bB, \psi)
		& = \int_{\mathcal{O}} (\bB(x) \cdot \nabla)\bu(x) \cdot \psi(x) \d x - \int_{\mathcal{O}} (\bu(x) \cdot \nabla) \bB(x) \cdot \psi(x) \, \d x 
                  \\
		& = - b(\bB,\psi,\bu) - b(\bu,\psi,\bB), \quad \forall \bB \in V_2, \quad \bu \in V \quad \text{and} \quad \psi \in \mathbb{H}^1.
	\end{align*}
From this previous equality, we easily derive that for any $\bB \in V_2, ~ \bu \in V, \psi \in \mathbb{H}^1$,
	\begin{equation}\label{eq2.11}
		|M_2(\bu,\bB, \psi)| 
		\leq 2 \|\bB\|_{\mathbb{L}^4} |\nabla \psi| \|\bu\|_{\mathbb{L}^4} 
		\leq C(\mathcal{O}) \|\bB\|_{\mathbb{H}^1}  \|\bu\|_{\mathbb{L}^4} \|\psi\|_{\mathbb{H}^1}. 
	\end{equation}
Thus, for an arbitrary but fixed $\psi \in V_2 \subset \mathbb{H}^1$, the mapping $M_2(\cdot,\cdot,\psi)$ defined on $V \times V_2$ with values in $\mathbb{R}$ is bilinear and continuous, which implies the existence of the continuous bilinear map $\tilde{M}_2: V \times V_2 \to V'_2$ satisfying the properties \eqref{eq2.9B} and \eqref{eq2.9}. This completes the proof of the lemma. 
\end{proof}
\begin{proof}[\textbf{Proof of Lemma \ref{lem:Bilinear-R0}}]
Note that $\curl(\curl \bu)= \nabla \diver \bu - \Delta \bu, \, \bu \in \mathcal{V}$ in both two and three dimensions. Thus, by density of $\mathcal{V}$ in $V$ the trilinear map $r_0: V\times H(\curl,\mathcal{O})\times V\to \mathbb{R} $ defined by 
	\begin{align*}
		r_0(\bu, \bw, \tilde{v})=& \int_{\mathcal{O}} [\curl(\curl \bu - 2 \bw)] \cdot \tilde{\bv} \, \d x\\
		=&  \int_{\mathcal{O}} \nabla \bu(x) : \nabla \tilde{\bv}(x) \, \d x -2 \int_{\mathcal{O}} \curl \bw(x) \cdot \tilde{\bv}(x) \, \d x 
		\\
		=&\langle \curl(\curl \bu - 2 \bw), \tilde{\bv}\rangle_{V^\prime, V}, \quad \bu, \,\tilde{\bv} \in V, \, \bw \in \mathbb{H}^1
	\end{align*}
is well-defined. In addition, by using H\"older's inequality, we see that 
	\begin{align*}
		|r_0(\bu, \bw, \tilde{\bv})| 
		& \leq |\nabla \bu| |\nabla \tilde{\bv}| + 2 |\curl\bw| |\tilde{\bv}|
		\\
		& \leq (|\nabla \bu| + 2 |\curl\bw|) \|\tilde{\bv}\|_V, ~ \bu, \tilde{\bv} \in V,~\bw \in H(\curl,\mathcal{O}). 
	\end{align*}
\dela{\adda{NB: the norm on V is the gradient norm.}}
This implies the existence of a continuous bilinear map $R_0(\cdot,\cdot): V \times H(\curl,\mathcal{O}) \to V'$ satisfying the assertions in the lemma. Consequently, the proof of the lemma is now complete.  
\end{proof}
Finally, we give the proof of Proposition \ref{propo-1}.
\begin{proof}[\textbf{Proof of Proposition \ref{propo-1}}]
We only prove the proposition for the map $F_1$.  Let $\zeta=(\zeta_k) \in \mathbb{R}^3$ and $\bu \in V$ be arbitrary but  fixed. Notice that the first inequality in \eqref{eq3.4} is equivalent to 
    \begin{equation*}
    	\sum_{i,j=1}^{3} \sum_{k=1}^{\infty} b_k^{(i)}(x) b_k^{(j)}(x) \zeta_i \zeta_j \leq 2 \sum_{i,j=1}^{3} \delta_{ij} \zeta_i \zeta_j -c_1|\zeta|^2 = (2-c_1) |\zeta|^2,
    \end{equation*}
which in turn implies \eqref{eq3.9}. Since the map $F_1$ is linear and bounded due to \eqref{eq3.9}, it is also Lipschitz continuous. 
Next, to prove \eqref{Eq3.10}$_1$, we can apply an argument devised by Brze\'zniak and Motyl in \cite{Motyl1}. For this purpose, we  introduce the bilinear form $\hat{b}(\cdot,\cdot) : \mathcal{V} \times \mathcal{V} \subset H \times V \to \mathbb{R}$ defined by
\begin{equation*}
	\hat{b}(\bu,\bv):= \sum_{i,j=1}^{3} \int_{\mathcal{O}} b_k^{(i)}(x) \partial_i \bu_j(x)  \bv_j(x) \, \d x = \int_{\mathcal{O}} (b_k(x) \cdot \nabla) \bu(x) \cdot \bv(x) \, \d x, \quad \forall \bu, \bv \in \mathcal{V}.
\end{equation*}
By integration by parts and H\"older's inequality, we have for all $\bu,\, \bv \in \mathcal{V}$
\begin{align*}
\hat{b}(\bu,\bv)
& = - \int_{\mathcal{O}} [\diver b_k (x) \bu(x) \cdot \bv(x) + (b_k(x) \cdot \nabla) \bv(x) \cdot \bu(x)] \, \d x 
  \\
        \\
& \leq \|\diver b_k\|_{L^\infty(\mathcal{O})} |\bu| \|\bv\|_{V} + \|b_k\|_{\mathbb{L}^\infty} \|\bv\|_{V} |\bu|.
\end{align*} 
Hence, we can define a linear map $\hat{B}$ by $\hat{B} (\bu):= \hat{b}(\bu,\cdot)$ from $H$ to $V'$, such that the following inequality holds
\begin{equation*}
	\|\hat{B}(\bu)\|_{V'} \leq (\|\diver b_k\|_{L^\infty(\mathcal{O})} +\|b_k\|_{\mathbb{L}^\infty}) |\bu|, \quad \bu \in H.
\end{equation*}
This previous inequality can be rewritten as follows
\begin{equation}\label{eq3.18}
	\|(b_k\cdot \nabla)\bu\|_{V'} \leq  (\|\diver b_k\|_{L^\infty(\mathcal{O})} +\|b_k\|_{\mathbb{L}^\infty)} |\bu|, \quad \bu \in H.
\end{equation}
Note that from  \eqref{Eqt3.6}$_1$, we have $F_1(\bu) e^1_i = (b_i \cdot \nabla) \bu$. Thus, from \eqref{eq3.18}, we obtain
\begin{equation}\label{eq3.20}
	\begin{aligned}
		\|F_1(\bu)e^1_i\|_{V'}^2 
		=\|(b_i \cdot \nabla) \bu\|_{V'}^2 
		&\leq \|(b_i \cdot \nabla) \bu\|_{V'}^2 \\  
		&\leq 2 (\|\diver b_i\|_{L^\infty(\mathcal{O})}^2 +\|b_i\|_{\mathbb{L}^\infty}^2) |\bu|^2, \quad \forall \bu \in H.
	\end{aligned}
\end{equation}
In light of \eqref{eq3.20}, we further obtain
\begin{equation}\label{eq3.21}
	\begin{aligned}
		\|F_1(\bu)\|_{L_2(U_1,V')}^2 
		= \sum_{k=1}^{\infty}\|F_1(\bu)e_k^1\|_{V'}^2 
		&\leq 2 \sum_{k=1}^{\infty} ( \|\diver b_k\|_{L^\infty(\mathcal{O})}^2 + \|b_k\|_{\mathbb{L}^\infty}^2) |\bu|^2 \\
		&\leq 2 C_1 |\bu|^2, \quad \forall \bu \in H.
	\end{aligned}
\end{equation}
Hence, $F_1(\bu) \in L_2(U_1,V')$ and \eqref{Eq3.10}$_1$ is proved. It is now easily seen that the  map $F_1$ satisfies all the assertions stated in the proposition. 
In a similar way, we can prove that the other maps $F_2, \, F_3$, and $G$ also satisfy all the proposition's assertions.
\end{proof}

\end{document}